\newcommand{\GM}{\ensuremath\tilde{{\Gamma}}}
\numberwithin{equation}{section}
\newcommand{\ep}{\ensuremath{\epsilon\,}}
\newcommand{\e}{{\mbox{\rm e}}}
\newcommand{\mb}[1]{{\mbox{\boldmath{$#1$}}}}
\newcommand{\mc}[1]{{\mathcal{#1}}}
\newcommand{\got}[1]{{\mathfrak{#1}}}
\newcommand{\db}[1]{{\mathbb{#1}}}
\newcommand{\pa}{\partial}
\newcommand{\R}{\ensuremath{\mathbb{R}}}\newcommand{\C}{\ensuremath{\mathbb{C}}}
 \newcommand{\N}{\ensuremath{\mathbb{N}}}
\newcommand{\Z}{\ensuremath{\mathbb{Z}}}
\newcommand{\g}{\ensuremath{\got{g}}}
\newtheorem{Remark}{Remark}
\newtheorem{Theorem}{Theorem}
\newtheorem{Proposition}{Proposition}
\newtheorem{lemma}{Lemma}
\newtheorem{Comment}{Comment}
\theoremstyle{definition} 
\def\ii{\operatorname{i}}
\newcommand{\Ka}{K\"ahler}
\newcommand{\mr}[1]{{\mathrm{#1}}}
\newcommand{\dd}{\operatorname{d}}
\newcommand{\un}{\ensuremath{\mathbb{1}_n}}
\newtheorem{deff}{Definition}
\newcommand{\h}{\ensuremath{\got{h}}}
\renewcommand{\Re}{\operatorname{Re}}
\renewcommand{\Im}{\operatorname{Im}}
\newcommand{\tr}{\operatorname{tr}}
\newcommand{\Sp}{\ensuremath{{\mbox{\rm{Sp}}(n,\R)}}}
\begin{document}
\title[Berry phase]{Berry phases and connection matrices defined  on
 homogeneous spaces attached to Jacobi groups}
\enlargethispage{1cm}
\author{Stefan  Berceanu}
\address[Stefan  Berceanu]{`Horia Hulubei'' `Horia Hulubei'' National
 Institute for Physics and Nuclear Engineering\\
Department of Theoretical Physics\\
PO BOX MG-6, Bucharest-Magurele, Romania}
 \email{Berceanu@theory.nipne.ro}
\begin{abstract}
The relation between the Berry phase and connection matrix  on the Siegel-Jacobi
disk $\mc{D}^J_1$  and Siegel-Jacobi upper half-plane $\mc{X}^J_1$ are analyzed.
The connection matrix and the covariant derivative of one-forms on the
extended Siegel-Jacobi upper  half-plane  $\tilde{\mc{X}}^J_1$ are calculated.
\end{abstract}
\subjclass{81Q70,53C05,37J55,53D15}      
\keywords{Berry phase, Berry connection,  Connection matrix,  Jacobi group, invariant metric,
Siegel--Jacobi disk, Siegel--Jacobi upper half-plane,  extended Siegel--Jacobi upper half-plane,
almost cosymplectic manifold}
\maketitle
\tableofcontents
\newpage

\section{Introduction} The complex  Jacobi group \cite{ez,bs} of index $n$ is defined as the
semi-direct product $G^J_n=\mr{H}_n\rtimes\text{Sp}(n,\R)_{\C}$,
where $\mr{H}_n$ denotes the 
$(2n+1)$-dimensional Heisenberg group \cite{Y02,sbj,nou}. To the Jacobi
group $G^J_n $ it is associated  a homogeneous manifold, called the
Siegel-Jacobi  ball  $\mc{D}^J_n$ \cite{sbj},  whose points are in
$\C^n\times\mc{D}_n$, i.e. a partially-bounded space \cite{Y08,Y10}.   $\mc{D}_n$ denotes the
Siegel (open)  ball  of index $n$. The non-compact Hermitian
symmetric space $ \operatorname{Sp}(n, \R
)_{\C}/\operatorname{U}(n)$ admits a matrix realization  as a  homogeneous bounded
domain \cite{helg}:
\[
  \mc{D}_n:=\{W\in  MS (n, \C ): \un-W\bar{W}>0\}.
  \]
The real Jacobi group of degree $n$ is defined as $G^J_n(\R):={\rm Sp}(n,\R)\ltimes \mr{H}_n$,
where $\mr{H}_n=\mr{H}_n(\R)$ is the real $(2n+1)$-dimensional Heisenberg group. ${\rm Sp}(n,\R)_{\C}$~and~$G^J_n$ are isomorphic to~${\rm Sp}(n,\R)$
and~$G^J_n(\R)$ respectively as real Lie groups, see \cite[Proposition~2]{nou}.

The invariant metric on the Siegel-Jacobi upper half-plane  on 
$\mc{X}^J_1=\frac{G^J_1(\R)}{{\rm SO}(2)\times\R}\approx
\mc{X}_1\times\R^2$ \cite{jac1,BER77,FC,SB14} was obtained previously by
Berndt~\cite{bern84,bern} and K\"ahler~\cite{cal3,cal}.

 We determined the invariant metric on a five
dimensional homogeneous manifold  ${\tilde{\mc{X}}^J_1}=\frac{G^J_1(\R)}{{\rm
    SO}(2)}\approx\mc{X}_1\times\R^3$ \cite{SB19},  called the extended Siegel--Jacobi
upper half-plane. The results of \cite{SB19}
concerning $\tilde{\mc{X}}^J_1$ have been generalized  in \cite{SB20N} to  the extended Siegel-Jacobi
upper half space
$\tilde{\mc{X}}^J_n=\frac{G^J_n(\R)}{\text{U}(n)}\approx
\mc{X}^J_n\times \R$, $\mc{X}^J_n\approx\C^n\times \mc{X}_n  $,
$\mc{X}_n=\frac{\mr{Sp}(n,\R)}{\mr{U}(n)}, $ $\N\ni n>
1$.

 We recall that on homogenous \Ka~ 
 manifolds the Hamilton  equations  of motion and the Berry phase
 were  simultaneously  investigated 
  in  \cite{sbcag,sbl,FC}, see also \cite{GO}. In the present paper
 we are interested in
 the same problem  of studying the  Berry phase on odd-dimensional manifolds, where several
 geometric structures can be introduced
 \cite{BL,BL2,boy,boga,can,Capp,MLEO,LI,sas}, see also a brief review 
 in \cite[Appendix]{SB22}. In our paper
 \cite{SB22} we have investigated Hamiltonian systems  on manifolds with
 almost cosymplectic structure in the sense of \cite{paul}. In the
 present paper we investigate 
the connection matrix on odd dimensional manifolds endowed
 with an almost complex structure.

We recall here our 
 interrest   to find a geometric significance to  the phase of  the scalar product of
 coherent states \cite{perG,lis1,neeb}. The answer to this question was given 
 by Pancharatnam for 
 the Poincar\'e sphere
 \cite{Pan,swA}, see also \cite{aa}  and \cite[Proposition 5.1]{ma}
 in the language of holonomy (see \S~\ref{HOLL}) of a loop in the projective
 Hilbert space,  and  by 
 Perelomov \cite[page 63]{perG} for the  sphere
 $\text{S}^2=\frac{\text{SU}(2)}{\text{U}(1)}$. 
    A general answer  to
this question using the coherent state embedding and the so called
'' Cauchy formulas'' was given in \cite{SB2000} and
\cite{SBS}. We also  studied this problem in \cite{SB99}--\cite{SB2003}. Explicit calculation was presented for the compact
Grassmann manifold
$G_n(\C^{m+n})=\frac{\text{SU}(n+m)}{\text{S}(\text{U}(n)\times
  \text{U}(m))}$  in \cite{SB2000}, where it was proved that {\it  the
phase of the scalar product of two coherent states is twice the
symplectic area of a geodesic triangle determined by the corresponding
points on the manifold and the origin of the system of
coordinates},  see
 also \cite[Theorem 2.1]{clerc}. The same result is also true for the
 noncompact dual
 $\frac{\text{SU}(n,m)}{\text{S}(\text{U}(n)\times
  \text{U}(m))}$ of
the compact Grassmann manifold
\cite{SB99,SB99b}. In \cite{FC}   the change
of coordinates $x\rightarrow z  $ in \eqref{2.1}  below was called 
$FC$-transform  (fundamental conjecture \cite{GV,pia,DN}). We
observed   that \cite[Remark 3]{sbl} 
\[
{\text{   For~symmetric manifolds the  FC-transform   gives
  geodesics\qquad (A)}}
\]
In \cite[Remark 1]{SB97}
we underlined that assertion (A) is true for class of manifolds which
includes the naturally reductive spaces \cite{nomizu,atri},
\cite[page 202]{kn}. We have considered the
sequence of manifolds
\begin{center}
  Hermitian symmetric spaces $\subset$ symmetric $\subset $ naturally
  reductive $\subset$ g. o.
\end{center}
We have shown in \cite[Proposition 5.8]{SB19} that $\mc{X}^J_1$ is not
naturally reductive with respect to the balanced metric \cite{don,arr,alo}.
In \cite[Theorem 1]{SB21} we have proved that the extended Siegel-Jacobi
upper half-plane, realized as homogenous Riemannian  manifold
($\tilde{\mc{X}}^J_1=\frac{G^J_1(\R)}{\text{SO}(2)},g_{\tilde{\mc{X}}^J_1}$) is a
reductive, non-symmetric manifold, non-naturally reductive with
respect with the metric \eqref{linvG}, not a g.o. space \cite{kwv}
 with respect to the
invariant metric $g_{\tilde{\mc{X}}^J_1}$.

We recall that the Berry phase is an important object in the study of
geometric phase physics \cite{BS,swA, GO,DP}.
We have studied the Berry phase on homogenous \Ka~ manifolds  in
\cite{sbcag, sbl, FC}. 

The paper is laid out as follows.   In Section  \ref{PR}  we recall the \Ka~
two-form on $\mc{D}^J_1$ and its two-parameter balanced metric image
on $\mc{X}^J_1$ obtained by  
the partial Cayley transform in Proposition \ref{PRFC}, the three
parameter invariant metric on $\tilde{\mc{X}}^J_1$ in the
S-coordinates \cite{BS} in Proposition \ref{PROP5},  while Proposition
\ref{PRR3} recalls  the invariant metric on $\mc{D}^J_n$, $\mc{X}^J_n$
and  $\tilde{\mc{X}}^J_n$.
Section \ref{BEP} recalls our investigation on  Berry phase on \Ka~
manifolds. In particular, Proposition \ref{PR44} recalls the Berry
phase on $\mc{D}^J_1$ and $\mc{X}^J_1$.
Section   \ref{ACS} summarise  the notion of  almost cosymplectic
manifold \cite{paul}. In particular, the manifold   $\tilde{\mc{X}}^J_1$ is endowed with a
generalized transitive almost cosymplectic structure \cite{SB22}. 
In \S \ref{CMSJ} is calculated the connection matrix \cite{cf}  on $\tilde{\mc{X}}^J_1$ and
in \S \ref{CD} are presented the covariant derivative (as in
\cite{YY1,YY2}, see also \cite[\S 3.2]{arr})
on ${\mc{X}}^J_1$
and $\tilde{\mc{Xs}}^J_1$.  The last Section - Appendix - collects
several mathematical concepts used in the paper:
 connections on real manifolds in Subsection \ref{REALC},
connections on complex manifolds,   Chern connections \cite{CH67}  and quantizable \Ka~ manifolds in
Subsection \ref{CCM}, 
the notion of holonomy \cite{BK} is recalled in  Subsection \ref{HOLL}. Some example
are contained   in Subsection \ref{544} devoted to  coherent states: Berry
connection and \Ka~ two-form for
the Heisenberg-Weyl  group, sphere $S^2$,  $\mc{D}_1$,
complex Grassmann manifold $G_n(\C^{n+m})$ and its non-compact dual,
$\db{CP}^n$ and $\db{CP}^{n,1}$.

Proposition
 \ref{PRFC} and  Comment \ref{CM1}
 are  improved   versions of older results, while      Remark
 \ref{RRR}  compare  our approach to Berry phase on \Ka~ manifolds
 \cite{sbcag,sbl,FC} 
 with the geometric phase in \cite{swA,BS,CH}.    The new relevant  results presented in this paper are \S ~\ref{33},
 formula \eqref{thetam} of $\theta_{\mc{X}^J_1}(x,y,q,p)$, 
   formula
\eqref{ABXY} of Berry phase on $\mc{X}^J_1$ in $(u,v)=(m+\ii n, x+\ii
y)$, formulae \eqref{ABXYY}, \eqref{AA1}, \eqref{AA2} of the Berry
phase in $(w,v)=(\alpha+\ii \beta, x+\ii y)$, \eqref{EQ11} for the
Berry phase in $(x,y,q,p)$,  Lemma
  \ref{NNOOU} which gives $\omega_{\mc{X}^J_n}(x,y,p,q)$, formula \eqref{thetap} 
  $\theta'_{\tilde{\mc{X}}^J_1}(x,y,q,p,\kappa)$ of the connection
  matrix on $\tilde{\mc{X}^J_1}$, 
 the covariant
derivatives $Dx$,(~$Dy,~Dq,~Dp$) \eqref{DX}, ( \eqref{DY}, \eqref{DQ},
respectively 
\eqref{DP})  on $\mc{X}^J_1$,  formulae of $Dx$, ($Dy,~Dq,~Dp, ~D\kappa
$)  \eqref{DDX}  (\eqref{DDY}, \eqref{DDQ}, \eqref{DDP},
respectively \eqref{DDK}) on $\tilde{\mc{X}}^J_1$.

\textbf{Notation}
We denote by $\mathbb{R}$, $\mathbb{C}$, $\mathbb{Z}$ and $\mathbb{N}$ 
 the field of real numbers, the field of complex numbers,
the ring of integers,   and the set of non-negative integers, respectively. We denote the imaginary unit
$\sqrt{-1}$ by~$\ii$, the real and imaginary parts of a complex
number $z\in\C$ by $\Re z$ and $\Im z$ respectively, and the complex 
conjugate of $z$ by $\bar{z}$.
 We denote by ${\dd }$ the differential. 
We use Einstein's summation convention, i.e.  repeated indices are
implicitly summed over.  The set of vector fields (1-forms) on real
manifolds  is denoted
by $\got{D}^1$ (respectively $\got{D}_1$). We denote a mixed tensor
contravariant of degree $r$ and covariant of degree $s$ by $\got{D}^r_s=\got{D}^r\times\got{D}_s$,
where $\got{D}^r=\underbrace{\got{D}^1\times \dots \times
  \got{D}^1}_{r}$ and $\got{D}_s=\underbrace{\got{D}_1\times \dots \times
  \got{D}_1}_{s}$ \cite[pages 13-17]{helg}. If $M$ is a complex manifold we
denote by $\got{A}^{r,s}$ the tensor fields of type $(r,s)$. 
  If we
denote with Roman  capital letteres the Lie  groups, then their
associated Lie algebras are denoted with the corresponding lower-case
letteres.   If $\got{H}$ is a Hilbert space, than we adopt the
  convention  that the scalar product $(\cdot,\cdot)$ on
  $\got{H}\times\got{H}$ is antilinear in the first
factor 
$(\lambda a,b)=\bar{\lambda}(a,b),\quad \lambda \in \C\setminus \{0\}
$. If $\pi$ is a representation of a Lie grup $G$ 
on the Hilbert $\got{H}$  and $X\in\got{g}$,  then we denote
$\bf{X}:=\dd \pi (X)$   \cite{SB03,SB14,perG}. The interior  product $i_X\omega$ (interior multiplication or contraction)
of the differential form $\omega$
with $X\in\got{D}^1$ is denoted $X\lrcorner \omega$. We
denote by $M(n,m,\db{F})$ the set of $n\times m$ matrices with elements
in the field $\db{F}$ and  $M(n,\db{F})$ denotes $M(n,n,\db{F})$. If $X\in M(n,m,\db{F})$, then $X^t$ denotes the
transpose of $X$. We denote by $MS(n,\db{F})=\{X\in M(n,\db{F})|X=X^t\}$. The
 conjugate transpose (or hermitian transpose)  of $ A\in
 M(q,\C)$  is  $A^H:=\bar{A}^t$,  also denoted $A^*$, $A^{\dagger}$, $A^+$.
 If $f$ is a function on $\C^n$, we write for the
total differential of $f$  $\dd f= \pa f+\bar{\pa} f$, $\pa f
=\sum_1^n{\pa_{\alpha}f}\dd z_{\alpha}$,
  where $\pa_{\alpha}f=\frac{\pa f}{\pa z_{\alpha}} $ \cite[page
  6]{GH}. If $f$ is a complex function, then by $f-{cc}$ we mean $f-\bar{f}$.

  \section{Preparation}\label{PR}

We adopt  the notation from  \cite{bs,ez} for the real  Jacobi group   $G^J_1(\R)$,  
realized as  submatrices of $\text{Sp}(2,\R)$ of the form
\begin{equation}\label{SP2R}
g=\left(\begin{array}{cccc} a& 0&b &   q\\
\lambda &1& \mu & \kappa\\c & 0& d &  -p\\
         0& 0& 0& 1\end{array}\right),~ M=
    \left(\begin{array}{cc}a&b\\c&d\end{array}\right),~ \det M
   =1, \end{equation}
where
\begin{equation}\label{DEFY}Y:=(p,q)=XM^{-1}=(\lambda,\mu) \left(\begin{array}{cc}a&b\\c&d\end{array}\right)^{-1}=(\lambda d-\mu
  c,-\lambda b+\mu a)\end{equation} 
is related to the Heisenberg group $\rm{H}_1$ described by
$(\lambda,\mu,\kappa)$.  For  coordinatization of  the  real Jacobi
group  we adopt  the so called  $S$-coordinates
$(x,y,\theta,p,q,\kappa)$  \cite{bs}.

Simultaneously with
the Jacobi group $G^J_1(\R)$ consisting of elements $(M,X,\kappa)$, we
considered the restricted real Jacobi  group $G^J(\R)_0$  of elements $(M,X)$ \cite{jac1,SB19}.

  The action  $G^J(\R)_0\times \mc{X}^J_1\rightarrow
  \mc{X}^J_1$    (respectively  $G^J_1(\R)\times \tilde{\mc{X}}^J_1\rightarrow
  \tilde{\mc{X}}^J_1$) in Lemma \ref{LEMN} below  is extracted from \cite[Lemma 5.1]{SB19}, \cite[Lemma  1]{SB20}.

Let \begin{equation}\label{TAUZ}
\C\ni v:=x+\ii y,~~~
\C\ni  u:=pv+q=\xi+\ii \rho,\quad ~x,y,p,q,
\xi,\rho\in\R.
\end{equation}
\Ka~calls $\tilde{\mc{X}}^J_1$ {\it{Phasenraum der Materie}}, $v$ is 
{\it{Pneuma}}, $u$ is {\it {Soma}} \cite[Sec. 35]{cal3}.

Let $\mc{X}^J_1\approx \mc{X}_1\times\R^2$ be the Siegel--Jacobi upper half-plane,
where $\mc{X}_1=\{v \in\C| ~y:=\Im v>0\}$ is the Siegel upper half-plane,  and
$\tilde{\mc{X}}^J_1\approx\mc{X}^J_1\times\R$ denotes  the extended Siegel--Jacobi upper half-plane. 
Then: 
\begin{lemma}\label{LEMN} a) The action  $G^J(\R)_0\times \mc{X}^J_1\rightarrow
  \mc{X}^J_1$
is given by \begin{equation}\label{AC1}
(M,X)\times
(v',u')=(v_1,u_1),\emph{\text{~where~}}v_1=\frac{a v'+b}{c
  v'+d},~u_1=\frac{u'+\lambda u'+\mu}{c u'+d}.\end{equation}

b) If  $u'=p'v'+q'$, $v'=x'+\ii y'$ as in \eqref{TAUZ}, then the
action 
\begin{equation}\label{AC11}
(M,X)\times (x',y',p',q')=(x_1,y_1,p_1,q_1)
\end{equation}
is given by the formula
\begin{equation} x_1+\ii y_1=\frac{(ax'+b)(cx'+d)+ac y'^2+\ii
  y'}{(cx'+d)^2+(cy')^2}\label{ALIGNN1},
                                  \end{equation}
and
\begin{equation}\label{AC12}
(p_1,q_1)=(p,q)+(p',q')
\left(\begin{array}{cc}a & b\\c & d\end{array}\right)^{-1}=(p+dp'-cq',q-bp'+aq').
\end{equation}

c)  The action $G^J_1(\R)\times \tilde{\mc{X}}^J_1\rightarrow
  \tilde{\mc{X}}^J_1$ is given by
\begin{equation}\label{AC2}
\begin{split}
& (M,X,\kappa)\times
 (v',z',\kappa')  =(v_1,z_1,\kappa_1),\\
& (M,X,\kappa)\times (x',y',p',q',\kappa')  =(x_1,y_1,p_1,q_1,\kappa_1),\\
 & \kappa_1  =\kappa +\kappa' +\lambda
q'-\mu p',~
(p',q')= (\frac{\rho'}{y'},\xi'-\frac{x'}{y'}\rho'),~
(\lambda,\mu)=(p,q)M
\end{split}
\end{equation}
and \eqref{ALIGNN1}, \eqref{AC12}.
\end{lemma}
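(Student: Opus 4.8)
The plan is to realize $G^J_1(\R)$ inside $\Spp$ via \eqref{SP2R} and to compute the action on $\tilde{\mc{X}}^J_1=G^J_1(\R)/{\rm SO}(2)$, resp. $\mc{X}^J_1=G^J_1(\R)/({\rm SO}(2)\times\R)$, as left translation on cosets read off against a fixed cross-section; parts (a) and (b) then follow from part (c) by the projection $G^J_1(\R)\to G^J(\R)_0$ that forgets the central variable $\kappa$, which acts trivially on the $(v,u)$-coordinates. Concretely, I would use the section $\sigma$ sending a point $(x',y',p',q',\kappa')$ to the element $g'\in G^J_1(\R)$ whose $\SL$-block is the upper-triangular representative $M_{v'}=\left(\begin{smallmatrix}1&x'\\0&1\end{smallmatrix}\right)\left(\begin{smallmatrix}\sqrt{y'}&0\\0&1/\sqrt{y'}\end{smallmatrix}\right)$, which satisfies $M_{v'}\cdot\ii=v'$ under the fractional-linear action, and whose Heisenberg entries are fixed by $(p',q',\kappa')$ through \eqref{DEFY} and \eqref{TAUZ} together with $u'=p'v'+q'$. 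For $g=(M,X,\kappa)$ one then multiplies $g\,g'$ in $\Spp$ and rewrites the product as $g''\,h$ with $g''=\sigma(x_1,y_1,p_1,q_1,\kappa_1)$ in the image of $\sigma$ and $h$ in the isotropy; the coordinates $(x_1,\dots,\kappa_1)$ are, by construction, the output of the action.

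Carrying this out, the upper-left $2\times2$ block multiplies exactly as in $\SL$, so the normalized ($NA$-)part of $M\,M_{v'}$ has base point $v_1=M\cdot v'=\frac{av'+b}{cv'+d}$ — the classical action on $\mc{X}_1$, and \eqref{ALIGNN1} is just its real and imaginary parts, obtained by multiplying numerator and denominator by $\overline{cv'+d}$ and using $ad-bc=1$. The second row and fourth column of \eqref{SP2R}, carrying $(\lambda,\mu)$ and $(q,-p)$, combine additively with the point's $(p',q')$ under the semidirect product: this gives \eqref{AC12} as the identity $Y_1=Y+Y'M^{-1}$ read off from \eqref{DEFY}, with $M^{-1}=\left(\begin{smallmatrix}d&-b\\-c&a\end{smallmatrix}\right)$, and hence — re-expressing through $u=pv+q$ — the formula for $u_1$ in \eqref{AC1}. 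For the central variable in part (c), keeping the $(2,4)$-entry of the product, the Heisenberg group law contributes the symplectic cocycle, yielding $\kappa_1=\kappa+\kappa'+\lambda q'-\mu p'$ with $(\lambda,\mu)=(p,q)M$.

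The work here is bookkeeping, not ideas: one has to be consistent about left versus right action, about the identification $\mc{X}^J_1\approx\mc{X}_1\times\R^2$ via $u=pv+q$, and about the normalization relating the raw Heisenberg matrix entries $(\lambda,\mu,\kappa)$ to $(p,q,\kappa)$ in \eqref{DEFY}, a wrong choice flipping signs in \eqref{AC12} and in the cocycle in \eqref{AC2}. The only genuinely non-trivial point is that the re-decomposition $g\,g'=g''\,h$ conjugates the Heisenberg part of the product past the ${\rm SO}(2)$-factor $h$; one must check that this does not feed back into $(p_1,q_1,\kappa_1)$, and it does not, since $h$ has trivial Heisenberg component and fixes the base point, so the rotation produced by the $NA$-normalization cancels in $Y_1=X_1(M\,M_{v'})^{-1}$ and leaves $\kappa_1$ unchanged. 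To sidestep the conventions entirely one can alternatively verify directly on \eqref{AC1}--\eqref{AC2} the two axioms of an action induced on a homogeneous space — compatibility with the group law of $G^J_1(\R)$ and triviality of the isotropy at the base point — reducing the proof to a finite mechanical check; this is what \cite[Lemma 5.1]{SB19} and \cite[Lemma 1]{SB20} do, the present lemma being their restatement in the coordinates \eqref{TAUZ}.
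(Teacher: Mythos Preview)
Your proposal is correct in approach and detail, but note that the paper does not actually prove this lemma: the sentence immediately preceding the statement says the lemma ``is extracted from \cite[Lemma 5.1]{SB19}, \cite[Lemma 1]{SB20}'', and no argument follows. Your sketch --- realize $G^J_1(\R)\subset\Spp$ via \eqref{SP2R}, pick the Iwasawa section $M_{v'}$ over $\mc{X}_1$, multiply, and re-decompose modulo ${\rm SO}(2)$ --- is exactly how those cited references obtain the formulas, as you yourself note in your final sentence. So there is nothing to compare: you have supplied the proof that the paper omits by citation, and your version matches the cited one.

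One small caveat: the displayed formula \eqref{AC1} for $u_1$ in the paper contains what appear to be typos (the $u'$ in $\lambda u'$ and in the denominator $cu'+d$ should be $v'$, consistent with the standard Jacobi action and with your derivation via $u=pv+q$). Your argument produces the correct $u_1=(u'+\lambda v'+\mu)/(cv'+d)$, so do not be misled into trying to match the printed version literally.
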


Proposition \ref{PRFC} is an improved version of
\cite[(4.38), (5.8)]{SB14}, \cite[(28),~ (29)]{GAB}, \cite[Proposition 2.1]{SB19},
 \cite[Proposition 2]{SB21}, \cite[Proposition 2]{SB22}, \cite[(18),(19)]{BER7}.

Below $(w,z)\in  ( \mc{D}_1,\C)$,
$(v,u)\in (\mc{X}_1,\C)$, and 
the parameters $k$ and $\nu$ come from representation theory of the
Jacobi group: $k$ indexes the positive discrete series of ${\rm
  SU}(1,1)$, $2k\in\N$, while $\nu>0$ indexes the representations of
the Heisenberg group \cite{jac1}.   $\rm{FC}$ is an abbreviation for
the {\it fundamental conjecture} for homogeneous \Ka~ manifolds
\cite{GV}, see also  \cite{pia}, \cite{DN}.

\begin{Proposition} \label{PRFC}.

Perelomov's coherent state vectors   associated to the group $G^J_1$
are defined as 
\begin{equation}\label{csu}
e_{z,w}:=e^{\sqrt{\mu}z{\mb{a}}^{\dagger}+w{\mb{K}}_+}e_0, ~z\in\C,~ |w|<1 ,
\end{equation}
and the reproducing kernel $K = K(\bar{z},\bar{w},z,w)$ is 
\begin{equation}\label{hot}
K =\!(e_{z,w},e_{z,w})\!=\!
(1\!-\!w\bar{w})^{-2k}\exp{\nu\frac{2z\bar{z}\!+\!z^2\bar{w}\!+\!\bar{z}^2w}{2(1-w\bar{w})}},
z ,w\in\C,|w|<1 . 
\end{equation}

  a) The \Ka~two-form on  $\mc{D}^J_1$, invariant to the action of
  $G^J_1=\rm{SU}(1,1)\ltimes\C$, is 
 \begin{equation}\label{kk1}
  -\ii \omega_{\mc{D}^J_1}
(w,z)\!=\!\frac{2k}{P^2}\dd w\wedge\dd
  \bar{w}\!+\!\nu \frac{\mc{A}\wedge\bar{\mc{A}}}{P},P:=1-|w|^2,\mc{A}=\mc{A}(w,z):=\dd
  z\!+\!\bar{\eta}\dd w.
\end{equation}
We have the change of variables $FC: (w,z)\rightarrow (w,\eta, \bar{\eta})$
\begin{gather}\label{E32}
{\rm FC}\colon \
 z=\eta-w\bar{\eta},\qquad {\rm FC}^{-1}\colon \
 \eta=\frac{z+\bar{z}w}{P},
\end{gather}
and
\begin{equation}\label{E32a}
{\rm FC}\colon \ \mc{A}(w,z)\rightarrow\mc{A}(w,\eta,\bar{\eta}):= \dd \eta -w\dd
\bar{\eta},
\end{equation}
\begin{subequations}
  \begin{align}
  -\ii \omega_{\mc{D}^J_1}(w,\eta)& = 
  -\ii {\rm{FC}}^*(\omega_{\mc{D}^J_1}(w,z))=\frac{2k}{P^2}\dd w\wedge\dd
                                 \bar{w}+\nu\dd\eta\wedge\dd
                                    \bar{\eta}, \label{E32b}\\
    \omega_{\mc{D}^J_1}(\alpha,\beta,q,p) & =4k\frac{\dd \alpha
                                            \wedge\dd \beta}
   {(1-\alpha^2-\beta^2)^2}+2\nu \dd q\wedge \dd p,\label{E32bb}
   \end{align}\end{subequations}
where
\begin{equation}\label{WVAB}
  w=\alpha+\ii \beta, \alpha,~\beta \in \R, \quad \eta=q+\ii p,~ p,q\in \R.
\end{equation}

Also with \eqref{hot} and \eqref{E32} we  have
\begin{equation}\label{GGG}
  B(w,\eta-w\bar{\eta})=(1-w\bar{w})^{-2k}\exp\nu \left[\eta\bar{\eta}-\frac{\bar{w}\eta^2+w\bar{\eta}^2}{2}\right].
  \end{equation}

  With a formula similar to \cite[(7.18)]{jac1} applied to
  \eqref{GGG},  we get
  \begin{subequations}
    \begin{align}
    -\ii \omega(w,\eta)& =\frac{2k}{(1-w\bar{w})^2}\dd w\wedge\dd
                         \bar{w}+\nu [\dd\eta\wedge \dd \bar{\eta}-\bar{\eta}\dd w\wedge
                         \dd \bar{\eta} +\eta \dd \bar{w}\wedge\dd \eta],\label{F1}\\
      \omega(\alpha,\beta,q,p) &=4k\frac{\dd \alpha
                                            \wedge\dd \beta}
   {(1-\alpha^2-\beta^2)^2}+2\nu \dd q\wedge \dd p \label{F2}\\& +2\nu [\dd q \wedge(p\dd \alpha-q\dd
                                 \beta)+\dd p\wedge(p\dd\beta+q\dd \alpha) ]\nonumber,
      \end{align}
    \end{subequations}
    and equation \eqref{F2}  is different of \eqref{E32bb}.

    In \eqref{GGG} we make the change of coordinates $w\rightarrow v$ \eqref{210b} and
$\eta=q+\ii p$, we get
\[
  \bar{\eta}^2w=\frac{(q^2-p^2-2\ii qp)(x^2+y^2-1-2\ii x)}{N},
  \]
  and finally we get
  \begin{equation}\label{FXY}
    f(x,y,q,p)=-2k \log \frac{4y}{N}+\nu F,\quad  F=
    \frac{2}{N}[(y+1)q^2+(x^2+y^2+y)p^2+2qpx].
    \end{equation}

  If in \eqref{F1}
  we make the change of variables $w\rightarrow
    v$ \eqref{ULTRAN1}, we get the \Ka~ two-form
    \begin{equation}
      -\ii \omega(v,\eta)=\frac{k}{2y^2}\dd v\wedge \dd\bar{v}+\nu \{-
      2\ii [
      \frac{\bar{\eta}}{(v+\ii)^2}\dd v\wedge \dd\bar{\eta}+      \frac{\eta}{(\bar{v}-\ii)^2} \dd \bar{v}\wedge \dd \eta] +\dd \eta\wedge \dd\bar{\eta}\},
    \end{equation}
    or the symplectic two-form
    \begin{subequations}\label{altaOM}
      \begin{align}
      \omega_{\mc{X}^J_1}(x,y,q,p)& =\frac{k}{y^2}\dd x\wedge \dd y \!+\!\frac{4\nu}{N^2}\{[q(x^2\!-\!(y+1)^2)\!-\!2px(y+1)](\dd
      x\wedge \dd q+\dd y\wedge \dd p)\nonumber\\
      & +[2qx(y+1)+p(x^2-(y+1)^2)](-\dd x \wedge \dd p+\dd
      y\wedge \dd q)\}
      +2\nu \dd q\wedge \dd p.\nonumber
      \end{align}
    \end{subequations}
    Note that \eqref{altaOM} is different from \eqref{214b} and \eqref{omM}. 
  The matrix of the balanced metric $h=h(\varsigma)$, 
$\varsigma:=(z,w)\in\C\times\mc{D}_1$ associated to the \Ka~two-form
\eqref{kk1} is 
\begin{equation}\label{metrica}
  h(\varsigma) =\left( \begin{array}{cc}h_{z\bar{z}}& h_{z\bar{w}}\\
                         h_{w\bar{z}}& h_{w\bar{w}}\end{array} \right)=
  \left(\begin{array}{cc} \frac{\mu}{P} & \mu \frac{\eta}{P} \\
\mu\frac{\bar{\eta}}{P} &
\frac{2k}{P^2}+\mu\frac{|\eta|^2}{P}\end{array}\right).  
\end{equation}
The inverse of the matrix \eqref{metrica} reads
\begin{equation}\label{hinv}
h^{-1}(\varsigma)= \left(\begin{array}{cc}h^{z\bar{z}}&
      h^{z\bar{w}}\\h^{w\bar{z}}&h^{w\bar{w}}\end{array}\right)  =  \left(\begin{array}{cc}
    \frac{P}{\mu}+\frac{P^2|\eta|^2}{2k} & -\frac{P^2\eta}{2k} \\
-\frac{P^2\bar{\eta}}{2k} & \frac{P^2}{2k}\end{array}\right).
\end{equation}

b) The second partial Cayley transform $\Phi_1: \mc{D}^J_1\rightarrow
\mc{X}^J_1$ and 
\begin{equation}\label{PHH1}
  \Phi_1:={\rm{FC}}_1\circ\Phi:  (w,z)\rightarrow
(v=x+\ii y,\eta=q+\ii p)\end{equation}
 and its inverse $\Phi_1 ^{-1}: (v,\eta)\rightarrow (w,z)$ are  given  by  
\begin{subequations}\label{ULTRAN}
\begin{align}
 \Phi_1: & ~w=\frac{v-\ii}{v+\ii}, \quad  z=\eta-\bar{\eta}\frac{v-i}{v+\ii}=2\ii \frac{pv+q}{v+\ii},\label{ULTRAN1}\\
\Phi_1 ^{-1}: &~ v=\ii \frac{1+w}{1-w}, \quad \eta =
\frac{(1+\ii \bar{v})(z-\bar{z})+v(\bar{v}-\ii)(z+\bar{z})}{2\ii
  (\bar{v}-v)}=\frac{z+\bar{z}w}{P}.\label{ULTRAN2}
\end{align}
\end{subequations}
Introducing the second partial Cayley transform \eqref{ULTRAN1} into
the \Ka ~two-form \eqref{kk1} on $\mc{D}^J_1$, we get the symplectic
two-form \eqref{omSYUHP}  on the
Siegel-Jacobi upper half-plane $(v,\eta)$, $\Im v>0$
\begin{subequations}\label{omSYUHP}
  \begin{align}
  -\ii \omega_{\mc{X}^J_1}(v,\bar{v},\eta,\bar{\eta}
    )&=\frac{8k}{P^2}\frac{\dd v\wedge \dd \bar{v}}{N^2}+\nu \dd
       \eta\wedge\dd \bar{\eta}=-\frac{2k}{(\bar{v}-v)^2}\dd v\wedge
       \dd \bar{v} + \eta\wedge\dd \bar{\eta},\label{214a}\\
     \omega_{\mc{X}^J_1}(x,y,q,p )    & =\frac{k}{y^2}\dd x\wedge \dd
                                         y+2\nu \dd q\wedge \dd p,\label{214b}
 \end{align}
\end{subequations}
\begin{equation}\label{NNN}
  N:=|v+\ii|^2=x^2+(y+1)^2,
\end{equation}
\begin{equation}\label{PPP}
  P=4\frac{y}{N}.
\end{equation}

 c) Using the partial Cayley transform
 $\Phi^{-1}:\mc{D}^J_1\rightarrow \mc{X}^J_1, ~(w,z)\rightarrow (v,u)$ and its
inverse 
\begin{subequations}\label{210}
\begin{align}
\Phi^{-1}: v & =\ii \frac{1+w}{1-w},~~u=\frac{z}{1-w}, ~~w,z\in\C,~
               |w|<1,\label{210a}\\
  \Phi: w & =\frac{v-\ii}{v+\ii},~~z=2\ii
        \frac{u}{v+\ii},~~v,u\in\C,~\Im v>0,\label{210b} 
\end{align}
\end{subequations}
we obtain
\begin{equation}\label{ALEFT}
\mc{A}\left(\frac{v - \ii}{v+ \ii},\frac{2\ii
      u}{v + \ii}\right)=\frac{2\ii}{v+\ii}\mc{B}(v,u),
\end{equation}
   where
  \begin{equation}\label{BFR2}
  \mc{B}(v,u) := {\rm d} u - r{\rm d} v, ~
  r:=\frac{u-\bar{u}}{v-\bar{v}}.
\end{equation}
  The Berndt--\Ka 's two-form (symplectic two-form)  invariant to the action of
$G^J(\R)_0$ $= \rm{SL}(2,\R)\ltimes\C$, is \eqref{BFR} \emph{(}\eqref{BRF}\emph{)}
\begin{subequations}
  \begin{align}
- \ii \omega_{\mc{X}^J_1}(v,u) & = -\frac{2k}{(\bar{v} - v)^2} \dd
v\wedge \dd\bar{v}+ \frac{2\nu}{\ii(\bar{v} -
                                 v)}\mc{B}\wedge\bar{\mc{B}} \label{BFR}\\
     & =\frac{1}{y}\{(\frac{k}{2y}+\nu r^2)\dd v\wedge \dd
       \bar{v}+\nu[\dd u\wedge \dd \bar{u}-r(\dd v\wedge
       \dd\bar{u}-cc)]\}, \label{BFFR}\\
    \omega(x,y,m,n) &= \frac{k}{y^2}\dd x\wedge \dd
                      y+\frac{2\nu}{y}(\dd m-r\dd x)\wedge (\dd n-r\dd
                      y)\label{BRF} \\
   & = (\frac{k}{y^2}\!+\!2\nu \frac{r^2}{y})\dd x\wedge \dd
     y\!+\!2\frac{\nu}{y}[\dd m\wedge\dd n\!+\!r(\dd y\wedge \dd m\!-\!\dd
     x\wedge \dd n)],\label{BRF2}\end{align}
\end{subequations}
\begin{equation}\label{umn}
  u = m+\ii n, \quad m,n \in \R, \quad r= \frac{n}{y} 
\end{equation}
With \eqref{hot}  and \eqref{210b} we get
\begin{equation}\label{DOIIi}
  K(\frac{v-\ii}{v+\ii},\frac{2\ii u}{v+\ii})=\left[\frac{|v+\ii|^2}{2\ii(\bar{v}-v)}\right]^{2k}\exp\frac{2\nu}{|v+\ii|^2}[|u|^2-\frac{(u\bar{v}-\bar{u}v)^2+(\bar{u}-u)^2}{2\ii(\bar{v}-v)}].
\end{equation}

With the    change of variables
${\rm FC}_1\colon (v,u)\rightarrow (v,\eta)$
 \begin{equation}\label{FC1MIN}
 {\rm FC}_1\colon \ 2\ii u=(v+\ii)\eta-(v-\ii)\bar{\eta}, \qquad
   {\rm FC}^{-1}_1 \colon  \eta=\frac{u\bar{v}-\bar{u}v}{\bar{v}-v} +
   \ii r,  \end{equation}
 in \eqref{BFR} we get \[r=p,\quad  m=px+q,\quad n=py, \] 
 \[
   B(v,\bar{v},\eta,\bar{\eta})=\frac{1}{2\ii}[(v+\ii )\dd
     \eta-(v-\ii)\dd \bar{\eta}], \] 
   and finally we regain \eqref{omSYUHP}.

   The matrix corresponding  to the balanced  metric \eqref{NEWMM}
    associated with
     the   \Ka~ two  -form \eqref{BFR}
reads 
     \begin{equation}\label{kmb}
       h(v,u)
       =\left(\begin{array}{cc}h_{v\bar{v}}&h_{v\bar{u}}\\\bar{h}_{v\bar{u}}&h_{u\bar{u}}\end{array}\right)
       =\left(\begin{array}{cc}   \frac{k}{2y^2}+\nu\frac{r^2}{y}
                &-\nu\frac{r}{y}\\-\nu\frac{r}{y} & \frac{\nu}{y}
     \end{array}\right),~y:=\frac{v-\bar{v}}{2\ii},
\end{equation}
and we also have
\begin{equation}\label{kmbINV}
       h^{-1}(v,u)= 
      \left(\begin{array}{cc}h^{v\bar{v}}&h^{v\bar{u}}\\\bar{h}^{u\bar{v}}&h^{u\bar{u}}\end{array}\right)
       =\left(\begin{array}{cc}\frac{2y^2}{k}&\frac{2y^2r}{k}\\\frac{2y^2r}{k} &
     \frac{y}{\nu}+2\frac{r^2y^2}{k}\end{array}\right).
\end{equation}

 d) If we apply the change of coordinates $\mc{D}^J_1\ni
 (v,u)\rightarrow 
 (x,y,p,q)\in\mc{X}^J_1$ \eqref{TAUZ}, 
 
then

\begin{equation}\label{BUVpq}
\mc{B}(v,u)=\dd u -p \dd v,
\end{equation}
\begin{equation}\label{BUVpq1}
  \mc{B}(v,u)=\mc{B}(x,y,p,q):=F \dd t =\mc{F}= v\dd p+\dd q=(x+\ii y)\dd p
  +\dd q,   F:= \dot{p}v+\dot{q},
\end{equation}
and we regain  \eqref{214b}.

e) The two-parameter   balanced  metric  on the
Siegel--Jacobi upper half-plane $\mc{X}^J_1$  associated to the \Ka~
two-form \eqref{BFR} is 
\begin{subequations}\label{METRS2}
  \begin{align}
  \!\!\dd s^2_{\mc{X}^J_1}(x,y,p,q)  \!&=\!
\alpha\frac{\dd x^2\!+\!\dd   y^2}{y^2}\!+\!\frac{\gamma}{y}(S\dd p^2\!+\!\dd q^2+2x\dd
                                     p\dd q)\\
    \!&=\!\alpha\frac{\dd x^2\!+\!\dd   y^2}{y^2} \!+\!
        \frac{\gamma}{y}(A^2\!+\!\!B^2),
 \end{align}  
\end{subequations}
where
\begin{equation}\label{AK}
  \alpha:=k/2,\quad\gamma:=\nu, \quad  S:=x^2+y^2, A\!:=\!\Re\mc{F}\!=\!x\dd p\!+\!\dd q,B\!:=\!\Im
        \mc{F}\!=\!p\dd y.
\end{equation}

  The metric matrix associated  with \eqref{METRS2} is
\[
g_{{\mc{X}}^J_1}\! = \!\left(\begin{array}{cccc}g_{xx} &0 &0 &0\\
0& g_{yy}& 0& 0 \\
0& 0& g_{pp} & g_{pq} \\0 & 0& g_{qp}& g_{qq}
 \end{array}\right),\!
 \begin{array}{cc}\qquad g_{xx}\!=\frac{\alpha}{y^2}, &
 \!g_{yy}\!=\!\frac{\alpha}{y^2};\\
g_{pq}\!=\!\gamma\frac{x}{y} , &
g_{pp} \!=\!\gamma\frac{S}{y},\quad g_{qq}\!=\!\frac{\gamma}{y}.
\end{array}
\]
\end{Proposition}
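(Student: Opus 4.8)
The plan is to derive the whole proposition from the reproducing kernel \eqref{hot} by the standard coherent-state recipe recalled in Subsection~\ref{CCM}: on a quantizable \Ka~ manifold the invariant two-form is $-\ii\omega=\pa\bar\pa\log K$ and the metric matrix is $h_{i\bar{j}}=\pa_i\pa_{\bar{j}}\log K$. First I would justify \eqref{hot} itself by expanding $e_{z,w}$ from \eqref{csu} in the lowest-weight representation of $\got{g}^J_1$, using the commutation relations of $\mb{a}^{\dagger}$, $\mb{K}_{\pm}$, $\mb{K}_0$ on $e_0$ together with the known $\mr{SU}(1,1)$ kernel $(1-w\bar{w})^{-2k}$; the cross terms $\nu(z^2\bar{w}+\bar{z}^2w)/(2(1-w\bar{w}))$ are forced by the semidirect-product structure $G^J_1=\mr{SU}(1,1)\ltimes\C$.

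For part a) I set $f:=\log K=-2k\log P+\nu(2z\bar{z}+z^2\bar{w}+\bar{z}^2w)/(2P)$ with $P=1-|w|^2$, compute $\pa f$ and then $\pa\bar\pa f$, and collect terms. The $\dd w\wedge\dd\bar{w}$ coefficient comes out $2k/P^2$, and after recognizing that the remaining $\nu$-terms assemble into $\mc{A}\wedge\bar{\mc{A}}/P$ with $\mc{A}=\dd z+\bar\eta\,\dd w$ and $\eta=(z+\bar{z}w)/P$, one obtains \eqref{kk1}. The change of variables \eqref{E32} is checked by solving the linear system $z=\eta-w\bar\eta$ for $\eta,\bar\eta$; differentiating gives $\dd z=\dd\eta-\bar\eta\,\dd w-w\,\dd\bar\eta$, hence $\mc{A}(w,z)=\dd\eta-w\,\dd\bar\eta$, which is \eqref{E32a}. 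Substituting into \eqref{kk1} and using $\mc{A}\wedge\bar{\mc{A}}=(\dd\eta-w\,\dd\bar\eta)\wedge(\dd\bar\eta-\bar{w}\,\dd\eta)=P\,\dd\eta\wedge\dd\bar\eta$ gives the short form \eqref{E32b}--\eqref{E32bb}; if instead one rewrites the kernel as the function $B$ of \eqref{GGG} and applies $\pa\bar\pa\log B$ treating $(w,\eta)$ as holomorphic coordinates --- which they are \emph{not}, since $\mr{FC}$ mixes $z$ and $\bar z$ --- one gets \eqref{F1}--\eqref{F2}, and I would simply record that the difference between \eqref{F2} and \eqref{E32bb} reflects these two inequivalent procedures. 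Finally the entries of \eqref{metrica} are $h_{z\bar{z}}=\pa_z\pa_{\bar{z}}f$, $h_{w\bar{z}}=\pa_w\pa_{\bar{z}}f$, $h_{w\bar{w}}=\pa_w\pa_{\bar{w}}f$, and \eqref{hinv} follows by inverting the $2\times 2$ matrix, using $\det h=2k\mu/P^{3}$.

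Parts b)--e) are pullback computations. For $\Phi_1$ I substitute $w=(v-\ii)/(v+\ii)$, so $\dd w=2\ii\,\dd v/(v+\ii)^2$ and $P=1-w\bar{w}=4y/N$ with $N=|v+\ii|^2$ as in \eqref{NNN}--\eqref{PPP}; then $\frac{2k}{P^2}\dd w\wedge\dd\bar{w}$ collapses to $-2k\,\dd v\wedge\dd\bar{v}/(\bar{v}-v)^2$, and using $z=2\ii(pv+q)/(v+\ii)$ from \eqref{ULTRAN1} one checks that $\dd\eta\wedge\dd\bar\eta$ is unchanged, giving \eqref{214a} and, in $v=x+\ii y$, $\eta=q+\ii p$, \eqref{214b}. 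For the partial Cayley transform $\Phi^{-1}$ of \eqref{210a} the key step is identity \eqref{ALEFT}: substituting into $\mc{A}=\dd z+\bar\eta\,\dd w$ and factoring out $2\ii/(v+\ii)$ produces $\mc{B}(v,u)=\dd u-r\,\dd v$ with $r=(u-\bar{u})/(v-\bar{v})=n/y$; then \eqref{BFR} follows from \eqref{kk1}, and expanding in $u=m+\ii n$ gives \eqref{BFFR}--\eqref{BRF2} together with the metric matrices \eqref{kmb}, \eqref{kmbINV}. The substitutions $\mr{FC}_1$ \eqref{FC1MIN} and \eqref{TAUZ} are again linear, giving $r=p$, $m=px+q$, $n=py$ and $\mc{B}=\dd u-p\,\dd v=v\,\dd p+\dd q$, which recovers \eqref{omSYUHP}; and for part e) the metric \eqref{METRS2} is the Riemannian metric underlying the Hermitian metric with matrix \eqref{kmb}, rewritten in the real coordinates $(x,y,p,q)$ via \eqref{BUVpq1}, after which separating $\dd x,\dd y,\dd p,\dd q$ gives the metric matrix $g_{\mc{X}^J_1}$.

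The main obstacle is bookkeeping rather than any single hard estimate: the statement bundles together half a dozen coordinate systems --- $(w,z)$, $(w,\eta,\bar\eta)$, $(\alpha,\beta,q,p)$, $(v,u)$, $(v,\eta)$, $(x,y,m,n)$, $(x,y,p,q)$ --- related by the Jacobians \eqref{E32}, \eqref{ULTRAN}, \eqref{210}, \eqref{FC1MIN}, \eqref{TAUZ}, and asserts both their mutual consistency and two deliberate non-equalities (\eqref{F2} versus \eqref{E32bb}, and \eqref{altaOM} versus \eqref{214b}). The care needed is to arrange the chain of substitutions so that these Jacobians compose correctly and to keep track of the one place where the order of the operations ``take $\pa\bar\pa\log$'' and ``change coordinates'' genuinely matters, which is precisely the source of the recorded discrepancies.
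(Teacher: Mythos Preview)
Your proposal is correct and follows the same approach as the paper. The paper itself does not give a self-contained proof of Proposition~\ref{PRFC}; it presents the proposition as an ``improved version'' of results from \cite{SB14,GAB,SB19,SB21,SB22,BER7} and defers the verifications to those references, with a few fragments of computation appearing later in Comment~\ref{CM1} and Remark~\ref{REM1}. Your plan---compute $f=\log K$ from \eqref{hot}, take $\pa\bar\pa f$ to obtain \eqref{kk1}, and then push through the chain of coordinate changes \eqref{E32}, \eqref{ULTRAN}, \eqref{210}, \eqref{FC1MIN}, \eqref{TAUZ}---is exactly the machinery those references use, and you have correctly identified the one genuinely delicate point: the $\mr{FC}$ and $\mr{FC}_1$ transforms are not holomorphic, so the operations ``pull back $\omega$'' and ``recompute $\pa\bar\pa\log K$ in the new variables'' do not commute, which is precisely what produces the recorded discrepancies between \eqref{F2} and \eqref{E32bb} and between \eqref{altaOM} and \eqref{214b}.
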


Below we reproduce  the Comment 5.5 in the first reference \cite{SB19} with some
completions: 
\begin{Comment}\label{CM1}
Berndt  \cite[p 8]{bern84}  considered the closed two-form
$\Omega=\dd \bar{\dd} f'$
of Siegel--Jacobi upper half-plane $\mc{X}^J_1$,  $G^J(\R)_0$-invariant to the action \eqref{AC1},
 obtained from the K\"ahler potential 
\begin{equation}\label{POT}
f'(\tau,z)= c_1\log (\tau-\bar{\tau}) -\ii
c_2\frac{(z-\bar{z})^2}{\tau-\bar{\tau}}, ~c_1,c_2>0.\end{equation}
Formula \eqref{POT} is  presented by Berndt as 
``communicated to the author  by \Ka''. Also in \cite[p 8]{bern84} is 
given  our  equation {\emph{{(5.21a)}}} in first reference \cite{SB19}, while our
present equation \eqref{METRS2} corrects two
printing errors in Berndt's paper.

Later, in \cite[\S~ 36]{cal3}, reproduced also
in \cite{cal},
\Ka~    argues how to choose the  potential as in
\eqref{POT},   
see  also  \cite[(9)  \S ~ 37]{cal3}, 
where  $c_1=-\frac{k}{2}$, $c_2=\ii\nu\pi$, i. e.
\begin{equation}\label{POT1}
  f'(\tau,z)= -\frac{k}{2}\log\frac{ \tau-\bar{\tau}}{2\ii}
  -\ii\pi\nu \frac{(z-\bar{z})^2}{\tau-\bar{\tau}}.\end{equation}

Once the \Ka~ potential \eqref{POT1} is known, we apply the recipe
\eqref{KALP2}
$$-\ii
\omega_{\mc{X}^J_1}(\tau,z)=f'_{\tau\bar{\tau}}\dd
\tau\wedge\dd \bar{\tau}+f'_{\tau\bar{z}}\dd \tau\wedge\dd \bar{z}
-\bar{f}'_{\tau\bar{z}}\dd \bar{\tau}\wedge \dd z+f'_{z\bar{z}}\dd z\wedge
\dd \bar{z}.$$

 The metric {\emph{(8)}} in \cite{cal3}  differs from  the metric
 \eqref{METRS2} by a factor of two,
 since  the Hermitian metric used by  \Ka~  is
 $\dd s^2=2g_{i\bar{j}}\dd z_i\otimes \dd\bar{z}_j$. If in \eqref{POT1} we
 take $k/2\rightarrow k$, we have 
\begin{subequations}
  \begin{align*}
 f'_{\tau}&=-k\frac{1}{\tau-\bar{\tau}}+\ii \pi \nu
    \frac{(z-\bar{z})^2}{(\tau-\bar{\tau})^2},~
    f'_{\tau\bar{\tau}}=-k\frac{1}{(\tau-\bar{\tau})^2}+2\ii
            \pi\nu\frac{(z-\bar{z})^2}{(\tau-\bar{\tau})^3}, \\
 f'_{\tau\bar{z}}& =-2\ii
    \pi\nu\frac{z-\bar{z}}{(\tau-\bar{\tau})^2},~
    f'_{z}  =-2\ii\pi\nu\frac{z-\bar{z}}{\tau-\bar{\tau}}, ~f'_{z\bar{z}}=2\ii\pi\nu\frac{1}{\tau-\bar{\tau}},
 \end{align*}
\end{subequations}
and  we get  \eqref{BFR}. 
 Relation \eqref{BFR}
 has been  obtained by Berndt  \cite[p 30]{bern}, where the
 denominator of the first term is misprinted  as $v-\bar{v}$ 
(or $\tau-\bar{\tau}$ in our notations). Equation
 \eqref{METRS2} appears also in  \cite[p 30]{bern} and \cite[p 62]{bs}.

 We denote in \eqref{POT1} $(\tau,z)$  with $(v,u)$ as in \cite[(9.16)]{jac1}.   
 Indeed, we make successively the transformations: the partial Cayley
 transform, $\Phi:~(w, z) \rightarrow 
 (v,u)$ \eqref{210}, a holomorphic transform, and we get \eqref{222a},
 then we apply  the   $FC_1$ transform $(v,u) \rightarrow  (v,\eta)$
 \eqref{FC1MIN}, 
 a non-holomorphic transform, to obtain \eqref{222b}, and finally we
 make the symplectic transform 
 $(w,z)\rightarrow (x,y,q,p)$ with the result \eqref{222c}
 \begin{subequations}\label{Ktau2}
\begin{align}
   \log K(v,u) & =-\frac{k}{2}\log
   \frac{v-\bar{v}}{2\ii}-\ii \nu
                              \frac{(u-\bar{u})^2}{v-\bar{v}}, 
                 \label{222a}\\
  \log K(v,\eta) & =
              -\frac{k}{2}\log
   \frac{v-\bar{v}}{2\ii}
              +\frac{\ii \nu}{4} (\eta-\bar{\eta})^2(v-\bar{v})\label{222b},\\
  \log K(x,y,q,p) & =- \frac{k}{2}\log y
  +2\nu yp^2.\label{222c}
\end{align}
\end{subequations}
Note that \eqref{222a} is different of \eqref{DOIIi}.

The metric  associated to the \Ka~two-form \eqref{214b} is
\begin{equation}\label{newM}
 \dd s^2 (x,y,q,p)=\frac{k}{2y^2}(\dd x^2 +\dd y^2) +\nu (\dd q^2
  +\dd p^2).
\end{equation}

The metric corresponding to the \Ka~two-form \eqref{222a} is
\begin{subequations}\label{NEWMM}
  \begin{align}
  \dd  s^2(x,y,n,m) & \!=\!(\frac{k}{2}+\nu \frac{n^2}{y})\frac{\dd x^2\!+\!\dd y^2}{y^2}
                      \!+\!\frac{\nu}{y}[\dd n^2 \!+\!\dd m^2 -2r(\dd m \dd x\!+\! \dd n \dd  y)]\\
                    &=\frac{k}{2}\frac{\dd x^2+\dd y^2}{y^2}+\frac{\nu}{y}[
                      (r\dd x -\dd m)^2+(r\dd
                      y-\dd n)^2], ~r=\frac{n}{y}.
    \end{align}
     \end{subequations}

The  metric \eqref{NEWMM} corresponds to the \Ka~potential \eqref{222a}
\begin{equation}
  f"(v,u)=-2k\log \frac{v-\bar{v}}{2\ii}-\ii\nu\frac{(u-\bar{u})^2}{v-\bar{v}}
  \end{equation}
  instead of \eqref{POT1}.

Equation \eqref{222c} was presented in \cite[(9.20)]{jac1}.

 In \cite[(4.3)]{gem}, see also \cite[Proposition
 4.1]{gem},  we have presented a
 generalization of \eqref{222c} for $\mc{X}^J_n$, obtained by Takase in \cite[\S 9]{tak}.

 Yang
 calculated  in \cite{Y07}  the 
  metric on $\mc{X}^J_n$, invariant to the action of $G^J_n(\R)_0$.  The equivalence of the metric of Yang with the metric
  obtained via CS  on $\mc{D}^J_n$ and then transported  to
  $\mc{X}^J_n$ via the partial Cayley transform $(v,u)\rightarrow
  (v,\eta)$ \eqref{210} is underlined in
  \cite{nou}. In particular, the metric {\emph{(5.21b)}}
  in  the first reference \cite{SB19}
 appears in \cite[p 99]{Y07} for the particular values $c_1=1$,
 $c_2=4$. See also \cite{Yan,Y08,Y10}. 
\end{Comment}

\begin{Remark}\label{REM1}
 In formula \eqref{222a} we make  the change of variables $FC_1$
  \eqref{FC1MIN} and we get \eqref{222b}.  
We apply to \eqref{222b} \cite[(7.18)]{jac1} to calculate
\[
  -\ii \omega(v,\eta)=h_{v\bar{v}}\dd v\wedge
  \dd \bar{v}+h_{v\bar{\eta}}\dd v\wedge \dd
  \bar{\eta}-\bar{h}_{v\bar{\eta}}\dd \bar{v}\wedge \dd
  \eta+h_{\eta\bar{\eta}}\dd \eta\wedge \dd\bar{\eta}.
\]
The associated matrix 
\begin{equation}\label{hs}
  h=\left(\begin{array}{cc}h_{v\bar{v}} & h_{v\bar{\eta}}\\
            h_{\eta
            \bar{v}}& h_{\eta\bar{\eta}}\end{array}\right)=
        \left(\begin{array}{cc}\frac{k}{8y^2}&\nu p\\
                \nu p & \nu  y \end{array}\right)
       \end{equation}
is hermitian and we have 
\begin{equation}\label{omM}
  \omega_{\mc{X}^J_1}(x,y,q,p)=\frac{k}{4}\frac{\dd x\wedge\dd
    y}{y^2}+2\nu[p(\dd x\wedge \dd p+\dd q\wedge \dd y)+y\dd q\wedge
  \dd p].
\end{equation}
\eqref{omM} is different of \eqref{214b} obtained introducing
\eqref{ULTRAN1} into \eqref{kk1}.
\end{Remark}

\begin{proof}
  We get from \eqref{222b}
  \begin{subequations}
    \begin{align}
      h_v
      &=-\frac{k}{2}\frac{1}{v-\bar{v}}+\frac{\ii\nu}{4}(\eta-\bar{\eta})^2,\\
       h_{v\bar{v}} & =
                      -\frac{k}{2}(v-\bar{v})^{-2}=\frac{k}{8}\frac{1}{y^2},
                      \\
      h_{v\bar{\eta}} & =-\ii \frac{\nu}{2}(\eta-\bar{\eta})=\nu p,\\
      h_{\eta} & =\frac{\ii \nu}{2} (\eta-\bar{\eta})(v-\bar{v}),\\
      h_{\eta\bar{\eta}} & =\nu y,
             \end{align}
    \end{subequations}
  and we get \eqref{hs} which is Hermitian. The conditions \eqref{EQK}  that the metric
    associated to \eqref{omM} be \Ka~ are met.
   \end{proof}

We have also obtained invariant metric to the action of the Jacobi
group $G^J_1(\R)$ on the extended Siegel-Jacobi upper half-plane $\tilde{\mc{X}}^J_1$  \cite[Proposition 5.6,  (5.25), (5.26)]{SB19}, see also
\cite[Proposition 4,  (69) ]{SB20}
\begin{Proposition}\label{PROP5}
The  three-parameter   metric of  the extended  Siegel-Jacobi upper
  half-plane  
  $\tilde{\mc{X}}^J_1$ expressed  in the  S-coordinates
  $(x,y,p,q,\kappa)$,  left-invariant with  respect  to  the action  of the Jacobi group
 $G^J_1(\R)$,  is given as 
  \begin{equation}\label{linvG}
    \begin{split}
 {\rm d} s^2_{\tilde{\mc{X}}^J_1}(x,y,p,q,\kappa) &
 \!=\!{\rm d} s^2_{\mc{X}^J_1}(x,y,p,q)+\lambda^2_6(p,q,\kappa)\\
 &\!=\!\frac{\alpha}{y^2}\big({\rm d} x^2+{\rm d}
 y^2\big)+\left(\frac{\gamma}{y}S+\delta q^2\right){\rm d} p^2+
 \left(\frac{\gamma}{y}+\delta p^2\right){\rm d} q^2 +\delta {\rm d} \kappa^2\\
& \!+ 2\left(\gamma\frac{x}{y}-\delta pq\right){\rm d} p{\rm d} q +2\delta (q{\rm d} p{\rm d}
\kappa-p{\rm d} q {\rm d} \kappa),
\end{split}
\end{equation}
where \cite[(5.15f), (5.17)]{SB19}
\[
\lambda_6=\sqrt{\delta}( \dd \kappa -p\dd q +q\dd p),
\]
and $S$ was defined in \eqref{AK}.

   The metric matrix associated  to the  metric \eqref{linvG} is
\begin{equation}\label{begGG}
g_{\tilde{\mc{X}}^J_1}\! = \!\left(\begin{array}{ccccc}g_{xx} &0 &0 &0&0\\
0& g_{yy}& 0& 0 & 0\\
0& 0& g_{pp} & g_{pq} & g_{p\kappa}\\0 & 0& g_{qp}& g_{qq}
 &g_{q\kappa}\\
0& 0& g_{\kappa p}& g_{\kappa q} & g_{\kappa\kappa}
 \end{array}\right),\!
 \begin{array}{cc}g_{xx}\!=\frac{\alpha}{y^2}, &
  \!g_{yy}\!=\!\frac{\alpha}{y^2},\\
 g_{pq}\!=\!\gamma\frac{x}{y}-\delta p q , &
~g_{p\kappa}\!=\!\delta q, g_{q\kappa}\!=\!-\delta p, \\
   g_{pp} \!=\!\gamma\frac{S}{y}+\delta q^2,&
 g_{qq}\!=\!\frac{\gamma}{y}+\delta p^2,   g_{\kappa\kappa}\!=\!\delta.
\end{array}
\end{equation}
The extended Siegel--Jacobi upper half-plane $\tilde{\mc{X}}^J_1$ does
not admit an almost contact structure $(\Phi,\xi,\eta)$ with a contact form $\eta=\lambda_6$ and Reeb vector $\xi= \operatorname{Ker}(\eta)$.
\end{Proposition}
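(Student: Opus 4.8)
The plan is to show that no triple $(\Phi,\xi,\eta)$ with $\eta=\lambda_6$ and $\xi$ spanning $\operatorname{Ker}(\eta)^{\perp}$ (the Reeb direction) can satisfy the defining axioms of an almost contact structure, namely $\Phi^2=-\mathrm{id}+\eta\otimes\xi$, $\eta(\xi)=1$, and $\Phi\xi=0$, $\eta\circ\Phi=0$. I would work entirely in the $S$-coordinates $(x,y,p,q,\kappa)$, using the metric matrix $g_{\tilde{\mc{X}}^J_1}$ from \eqref{begGG} and the one-form $\lambda_6=\sqrt{\delta}(\dd\kappa-p\dd q+q\dd p)$. First I would compute the Reeb vector field explicitly: $\xi$ is determined by the normalization $\eta(\xi)=1$ together with $\xi\in\operatorname{Ker}(\eta)^{\perp}$, equivalently $\xi=g^{-1}\eta/\|\eta\|^2$ (here $\eta=\lambda_6$ as a covector). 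Since $\|\lambda_6\|^2=\delta\cdot g^{\kappa\kappa}+\cdots$ must be worked out from the inverse of \eqref{begGG}, this is a short linear-algebra computation; one finds $\xi$ proportional to $\partial_\kappa$ plus corrections in $\partial_p,\partial_q$.

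The key obstruction I expect to exploit is \emph{dimensional}: an almost contact manifold must be odd-dimensional \emph{and} the distribution $\operatorname{Ker}(\eta)$ must carry an almost complex structure $\Phi|_{\operatorname{Ker}(\eta)}$, i.e. $\operatorname{Ker}(\eta)$ must be even-dimensional. Here $\dim\tilde{\mc{X}}^J_1=5$, so $\operatorname{Ker}(\lambda_6)$ is $4$-dimensional, and parity alone does not immediately kill the structure. So the real argument must be finer: I would show that compatibility of $\Phi$ with the metric \eqref{linvG} fails. Specifically, an almost contact \emph{metric} structure would require $g(\Phi X,\Phi Y)=g(X,Y)-\eta(X)\eta(Y)$; but even without metric compatibility, the bare almost contact axioms force $\Phi$ to restrict to an almost complex structure on the $4$-plane field $\operatorname{Ker}(\lambda_6)$, and I would argue this is obstructed by the explicit form of how $\lambda_6$ and its kernel sit inside the tangent bundle — in particular by examining $\dd\lambda_6$. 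We have $\dd\lambda_6=\sqrt{\delta}(-\dd p\wedge\dd q+\dd q\wedge\dd p)=-2\sqrt{\delta}\,\dd p\wedge\dd q$, a two-form of rank $2$ on a $4$-dimensional kernel.

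The main step, therefore, is the following: I would examine the restriction of $\dd\lambda_6$ to $\operatorname{Ker}(\lambda_6)$ and observe it has rank exactly $2$, not $4$, so $\lambda_6$ is far from a contact form (a genuine contact form on a $5$-manifold needs $\lambda_6\wedge(\dd\lambda_6)^2\neq0$, whereas here $(\dd\lambda_6)^2=0$). This degeneracy is the heart of the matter: the $2$-plane $\operatorname{span}(\partial_x,\partial_y)$ lies in $\operatorname{Ker}(\lambda_6)$ and in $\operatorname{Ker}(\dd\lambda_6)$, so on that $2$-plane the would-be almost complex structure $\Phi$ has nothing to pair it against within the natural symplectic-type data, and I would turn this into a contradiction with $\Phi^2=-\mathrm{id}$ on $\operatorname{Ker}(\lambda_6)$ being compatible with the induced geometry — more precisely, I would show that any $\Phi$ satisfying the almost contact axioms with Reeb field $\xi$ as computed above is incompatible with $g_{\tilde{\mc{X}}^J_1}$ because the $(\partial_x,\partial_y)$-block of $g$ is independent of $(p,q,\kappa)$ while the block structure forced by $\Phi$-invariance of $g$ modulo $\eta\otimes\eta$ would mix it with the $(p,q,\kappa)$-directions.

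The hard part will be making the last incompatibility rigorous rather than heuristic: one must either (i) set up the most general $5\times5$ matrix $\Phi$ obeying $\Phi\xi=0$, $\eta\circ\Phi=0$, $\Phi^2=-\mathrm{id}+\eta\otimes\xi$, and derive a contradiction purely algebraically from these plus the explicit $g$, or (ii) invoke a structural theorem (e.g. that $\tilde{\mc{X}}^J_1$ with this metric is known from \cite[Theorem 1]{SB21} to be reductive non-symmetric, non-g.o., and use that an invariant almost contact metric structure with Reeb field along the center would descend to such a theorem's forbidden symmetry). I expect route (i) is cleanest: the constraints $\Phi\xi=0$ and $\eta\circ\Phi=0$ cut $\Phi$ down to a $4\times4$ block $J$ on $\operatorname{Ker}(\lambda_6)$ with $J^2=-\mathrm{id}$, and then $g(JX,JY)=g(X,Y)$ on that $4$-plane is a system of quadratic equations in the entries of $J$ whose solvability I would test against the explicit sub-block of \eqref{begGG}; the $y$-dependence of $g_{xx}=g_{yy}=\alpha/y^2$ versus the $(p,q)$-dependence of $g_{pp},g_{qq},g_{pq}$ should make the system inconsistent, yielding the claim.
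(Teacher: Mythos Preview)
The paper does not prove this Proposition here; it is recalled from \cite[Proposition~5.6]{SB19} and \cite[Proposition~4]{SB20}, so there is no in-paper argument to compare against directly.

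Your central computation is correct and is in fact the entire argument needed for the last assertion: $\dd\lambda_6 = 2\sqrt{\delta}\,\dd q\wedge\dd p$ has rank~$2$, so $(\dd\lambda_6)^2=0$ and hence $\lambda_6\wedge(\dd\lambda_6)^2=0$. On a $5$-manifold this means $\lambda_6$ is \emph{not} a contact form, and therefore no almost contact structure $(\Phi,\xi,\eta)$ with $\eta=\lambda_6$ a \emph{contact form} can exist --- there is no Reeb vector field in the contact sense to speak of. Since the statement explicitly says ``with a contact form $\eta=\lambda_6$'', you are done at this point; you should stop there.

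Where your proposal goes wrong is the second half, in which you try to rule out a \emph{bare} almost contact structure (i.e.\ dropping the requirement that $\eta$ be a genuine contact form) via metric incompatibility. That program cannot succeed as stated: $\tilde{\mc{X}}^J_1\approx\mc{X}_1\times\R^3$ is contractible, so $\operatorname{Ker}(\lambda_6)$ is a trivial rank-$4$ bundle and trivially carries an almost complex structure $J$; extending by $\Phi\xi=0$ for any $\xi$ with $\lambda_6(\xi)=1$ gives an almost contact triple $(\Phi,\xi,\lambda_6)$. Your route~(i) would therefore produce a \emph{consistent} system, not a contradiction, unless you impose additional compatibility (e.g.\ almost contact \emph{metric} with respect to $g_{\tilde{\mc{X}}^J_1}$, or normality), which the statement does not ask for. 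The ``hard part'' you anticipated is not merely hard --- it is false as posed. The easy observation you already made is the proof.
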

Now some results of Proposition \ref{PRFC} are extended from
$\mc{D}^J_1$ and  $\mc{X}^J_1$
to  $\mc{D}^J_n$, respectively $\mc{X}^J_n$. Below  $k$, $2k\in \N$ indexes  the holomorphic discrete series of $\Sp$
  and $\nu>0$ indexes 
  the representations of the Heisenberg group.
Parts of the following Proposition are taken from \cite[Proposition
3, Theorem 1]{SB22}, see also 
\cite[Proposition 3]{nou},  \cite[Theorem 3.2]{SB15}: 
\begin{Proposition}\label{PRR3}
  a) The \Ka~two-form on $n(n+3)$-dimensional $\mc{D}^J_n$,  invariant to the action of
  $(G^J_n)_0$,  is 
 \begin{subequations}\label{NOUW1}
   \begin{align} \!-\!\ii\!\omega_{\mc{D}^J_n} (W,z) & \!=\!\frac{k}{2}\tr(B\wedge\bar{B})\!+\!\nu\tr(A^t\bar{M}\wedge\bar{A}),
 \!A (W,z)\!:=\!\dd z^t\!+\!\dd W\bar{\eta}, W\in\mc{D}_n,\\
B(W)  & := M\dd W, ~M  \!:=\!(\un
            -W\bar{W})^{-1},~z\in
            M(1,n,\C), ~\eta\in M(n,1,\C),
\end{align}
\end{subequations}

b) Using the partial Cayley transform
\begin{subequations}\label{PCT}
\begin{align}
\Phi^{-1} &: v=\ii (\un-W)^{-1}(\un+W); ~ u^t=(\un-W)^{-1}z^t, ~
              W\in\mc{D}_n,~ v\in \mc{X}_n;\\
\Phi &: W=(v-\ii \un)^{-1}(v+\ii \un), ~  z^t=2\ii(v+\ii \un)^{-1}u^t,  ~z,u\in M(1,n,\C),
\end{align} 
\end{subequations}
we get from the \Ka~ two-form on $\mc{X}^J_n$ depending on two parameters,
invariant to the action  of $G^J_n(\R)_0$: 
\begin{equation}\label{NOUW}
 -\ii \omega_{\mc{X}^J_n} (v,u) =
                         \frac{k}{2}\tr(H\wedge\bar{H})+\frac{2\nu}{\ii}\tr(G^tD\wedge\bar{G}),\quad
                         D :=(\bar{v}-v)^{-1},~H:=D\dd v.
                         \end{equation}
                       where\begin{equation}\label{XXV}
  G^t(v,u)=\dd u-p\dd v,
 \end{equation}
  and
  \begin{equation}\label{214}
    G^t(v,u)=G^t(x,y,p,q)=\dd p v+\dd q= \dd p(x+\ii y)+\dd q.
    \end{equation}

     c) Let $M(n,\C)\ni v=x+\ii y$ be a symmetric  positive definite
     matrix and $p,q\in M(n,1,\C)$.  The  three parameter metric on $\tilde{\mc{X}}^J_n$, invariant
      to the  $G^J_n(\R)$ action  
      is
      \begin{equation}\label{BIGM}\begin{split}
        \dd s_{\tilde{\mc{X}}^J_n}^2(x,y,p,q,\kappa)&=
        \dd s_{\mc{X}^J_n}^2(x,y,p,q)+ \lambda_6^2\\
         & =\alpha\tr[(y^{-1}\dd
                                      x)^2+(y^{-1}\dd y)^2]\\  
        & + \gamma[\dd p
        (xy^{-1}x+yy^{-1}y)\dd p^t+\dd q y^{-1}\dd q^t +2\dd p
        xy^{-1} \dd q^t]\\
        & +\delta (\dd \kappa -p \dd q^t+q \dd p^t)^2.
      \end{split}
      \end{equation}
   \end{Proposition}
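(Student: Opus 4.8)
The plan is to derive all three parts from the coherent-state construction on the Siegel--Jacobi ball $\mc{D}^J_n$ together with the matrix partial Cayley transform \eqref{PCT}, following verbatim the degree-one scheme of Proposition~\ref{PRFC} and Proposition~\ref{PROP5}, and invoking \cite[Proposition~3, Theorem~1]{SB22}, \cite[Proposition~3]{nou}, \cite[Theorem~3.2]{SB15} for the parts already established there.

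For part~a) I would start from Perelomov's coherent state vectors attached to $G^J_n$ and the associated reproducing kernel $K_n=K_n(\bar z,\bar W,z,W)$, which for $\mc{D}^J_n$ equals $\det(\un-W\bar W)^{-k}$ times the exponential of a quadratic form in $z,\bar z$ generalizing \eqref{hot}. Applying the recipe $-\ii\omega_{\mc{D}^J_n}=\partial\bar\partial\log K_n$ as in \eqref{KALP2}, one uses $\partial\bar\partial\log\det(\un-W\bar W)=-\tr(M\dd W\wedge\overline{M\dd W})$ with $M=(\un-W\bar W)^{-1}$, and differentiates the exponent after the change of variable $z\mapsto(z,\eta,\bar\eta)$ that is the matrix analogue of \eqref{E32}; the one-form $A(W,z)=\dd z^t+\dd W\bar\eta$ appears precisely as the $\bar\partial$ of the numerator of the CS exponent, and collecting terms gives the two traces in \eqref{NOUW1}. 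Invariance under $(G^J_n)_0$ follows because $\log K_n$ is a \Ka~ potential transforming by a \Ka~ transformation under the action, i.e.\ $K_n(g\cdot\varsigma)=|j(g,\varsigma)|^2K_n(\varsigma)$ for a holomorphic cocycle $j$, which is annihilated by $\partial\bar\partial$.

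For part~b) I would substitute \eqref{PCT} directly into \eqref{NOUW1}. The key algebraic ingredients are the matrix analogues of \eqref{ALEFT} and of the scalar identity $P=4y/N$: under $W=(v-\ii\un)^{-1}(v+\ii\un)$ one simplifies $\un-W\bar W$ in terms of $\bar v-v$ and the factors $v\pm\ii\un$, so $\tr(B\wedge\bar B)$ produces the first term $\tfrac{k}{2}\tr(H\wedge\bar H)$ with $H=(\bar v-v)^{-1}\dd v$, while in the second trace the prefactor coming from $\dd W$ and from $z^t=2\ii(v+\ii\un)^{-1}u^t$ is absorbed so that $A$ is carried to a multiple of $G^t$, giving $\tfrac{2\nu}{\ii}\tr(G^tD\wedge\bar G)$ with $D=(\bar v-v)^{-1}$. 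Formulae \eqref{XXV} and \eqref{214} for $G^t$ are then the coordinate rewriting $u=pv+q$, exactly as \eqref{BUVpq}, \eqref{BUVpq1} in the $n=1$ case; invariance under $G^J_n(\R)_0$ is transported from part~a) through the equivariant transform $\Phi$.

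For part~c) the metric $\dd s^2_{\mc{X}^J_n}$ is the one associated with \eqref{NOUW} by the rule $\dd s^2=2g_{i\bar j}\dd z_i\otimes\dd\bar z_j$; expressing it in $(x,y,p,q)$ gives the first block of \eqref{BIGM}. The extended manifold $\tilde{\mc{X}}^J_n=G^J_n(\R)/\mr{U}(n)$ carries one extra real coordinate $\kappa$ from the centre of $\mr{H}_n$, and the left-invariant one-form dual to it is $\lambda_6=\sqrt{\delta}(\dd\kappa-p\dd q^t+q\dd p^t)$, whose square is $G^J_n(\R)$-invariant because $\dd\kappa-p\dd q^t+q\dd p^t$ is the Maurer--Cartan form in the central direction, its invariance being read off from the degree-$n$ version of the group law $\kappa_1=\kappa+\kappa'+\lambda q'-\mu p'$ in \eqref{AC2}. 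Adding $\lambda_6^2$ to the (a fortiori $G^J_n(\R)$-invariant) metric $\dd s^2_{\mc{X}^J_n}$ yields \eqref{BIGM}.

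I expect the main obstacle to be the bookkeeping in part~b): the partial Cayley transform is matrix-valued and, after the $FC_1$-type step, non-holomorphic, so one must track transposes and the order of the factors $(v\pm\ii\un)^{-1}$ carefully when simplifying $\un-W\bar W$ and when pushing forward $A$. Once these matrix identities are in place, parts~a) and c) reduce to the $n=1$ arguments of Proposition~\ref{PRFC} and Proposition~\ref{PROP5} together with the cited results.
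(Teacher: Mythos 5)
Your outline follows the same route the paper (implicitly) takes: Proposition~\ref{PRR3} is stated here without proof and is recalled from \cite{SB22}, \cite{nou}, \cite{SB15} and \cite{SB20N}, all of which obtain the two-form from the coherent-state K\"ahler potential on $\mc{D}^J_n$, transport it to $\mc{X}^J_n$ by the matrix partial Cayley transform \eqref{PCT}, and append $\lambda_6^2$ for the extended space, exactly as you propose. The one point to fix is your assertion that part~a) ``reduces to the $n=1$ arguments of Proposition~\ref{PRFC}'': the normalizations of $k$ in \eqref{kk1} and \eqref{NOUW1} do not agree (at $n=1$ the first term of \eqref{NOUW1} is $\tfrac{k}{2}P^{-2}\dd w\wedge\dd\bar{w}$, whereas \eqref{kk1} has $2kP^{-2}\dd w\wedge\dd\bar{w}$), so the determinant power in your kernel $K_n$ must be chosen consistently with \eqref{NOUW1} rather than read off from \eqref{hot}.
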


 \section{Berry phase on \Ka~ manifolds}\label{BEP}

 \subsection{Balanced metric}

The starting point in  Perelomov's approach to coherent states (CS) is  the triplet
$(G,\pi,\got{H})$, where $\pi$ is a unitary, irreducible representation
of the Lie
group $G$ on a separable complex  Hilbert space $\got{H}$  \cite{perG}. 

Two types of CS-vectors belonging to  $\got{H}$ are locally defined on
$M=G/H$:  the normalized (un-normalized) CS-vector
 $\underline{e}_x$ (respectively, $e_z$) \cite[\S 6, Remark 4, (6.25)]{SB95}
 \begin{equation}\label{2.1}
\underline{e}_x=\exp(\sum_{\phi\in\Delta^+}x_{\phi}{\mb{X}}^+_{\phi}-{\bar{x}}_{\phi}{\mb{X}}^-_{\phi})e_0,
\quad e_z=\exp(\sum_{\phi\in\Delta^+}z_{\phi}{\mb{X}}^+_{\phi})e_0,
\end{equation}
where $e_0$ is the extremal weight vector of the representation $\pi$,
$\Delta^+$ is the set of positive roots
of the Lie algebra $\got{g}$, and   $X_{\phi}$, $\phi\in\Delta$
$X^+_{\phi}$   ($X^-_{\phi}$) 
 are the positive (respectively, negative) generators.

In the standard procedure of CS,
the  $G$-invariant \Ka~ two-form  on a $2n$-dimensio-\newline nal homogenous 
manifold $M=G/H$ is obtained from the \Ka~ potential $f$ via the recipe
\begin{subequations}\label{KALP}
  \begin{align}-\ii\omega_M & =\pa\bar{\pa}f, ~f(z,\bar{z})=\log
  K(z,\bar{z}), ~K(z,\bar{z}):=(e_{{z}},e_{{z}}),\label{KALP1}\\
\omega_M(z,\bar{z}) & =\ii \sum_{\alpha,\beta}h_{\alpha\bar{\beta}}\dd
  z_{\alpha}\wedge \dd \bar{z}_{\beta},~
  h_{\alpha\bar{\beta}}=\frac{\pa^2 f}{\pa z_{\alpha}\pa
      \bar{z}_{\beta}},~
                      h_{\alpha\bar{\beta}}=\bar{h}_{\beta\bar{\alpha}},~\alpha,\beta=1,\dots,n,\label{KALP2}
  \end{align}
  \end{subequations}
where   $K(z,\bar{z})$
is  the scalar product of two  un-normalized Perelomov's  CS-vectors $e_{{z}}$ at
$z\in M$  \cite{sbj,SB15, perG}.

It is well known, see \cite[Theorem
4.17]{ball}, \cite[Proposition 20]{SB19}, \cite[(6), p 156]{kn}, 
that the condition \begin{equation}\label{condH}\dd \omega=0\end{equation} for a Hermitian
manifold to have a \Ka~ structure is equivalent with the conditions
\begin{equation}\label{EQK}
  \frac{\pa h_{\alpha\bar{\beta}}}{\pa z_{\gamma}}= \frac{\pa
      h_{\gamma\bar{\beta}}}{\pa z_{\alpha}}, \quad\text{or}\quad \frac{\pa h_{\alpha\bar{\beta}}}{\pa z_{\gamma}}= \frac{\pa
      h_{\alpha\bar{\gamma}}}{\pa z_{\bar{\beta}}},\quad\alpha,\beta,\gamma =1,\dots,n.
 \end{equation}

   In accord with  \cite[p 42 ]{ball}, \cite[p 28]{green},
   \cite[Appendix B]{SB19},     the Riemannian metric associated with the Hermitian  metric
    on the manifold $M$ in local coordinates is 
    \begin{equation}\label{asm}\dd
    s^2_{M}(z,\bar{z})=\sum_{\alpha,\beta}h_{\alpha\bar{\beta}}\dd
    z_{\alpha}\otimes\dd \bar{z}_{\beta}.\end{equation}
  Sometimes \cite[(7.4)]{CH67}, if the  metric is taken as in
  \eqref{asm}, then 
  the \Ka-two form is taken instead of \eqref{KALP2} as 
\begin{equation}\label{KALP3}
    -\ii\omega_M=\frac{\ii}{2} \sum_{\alpha,\beta}h_{\alpha\bar{\beta}}\dd
  z_{\alpha}\wedge \dd \bar{z}_{\beta}.
 \end{equation}

This choice of $f$ in \eqref{KALP3}  corresponds to the situation where the so called
$\epsilon$-function  \cite{cahII, raw,Cah},
\begin{gather*}
\epsilon(z) := \e^{-f(z)}K_M(z,\bar{z}),
\end{gather*}
is constant. The corresponding $G$-invariant metric is called {\it balanced metric}. This denomination was firstly used in~\cite{don} for compact manifolds, then it was used in \cite{arr} for noncompact manifolds, also in~\cite{alo} in the context of Berezin quantization on homogeneous bounded domain, and we have used it in the case of the partially bounded domain $\mc{D}^J_n$ -- the Siegel--Jacobi ball~\cite{SB15}.

\begin{Remark}\label{CER}
  The \Ka~two-form $\omega_{\mc{D}^J_1}(w,\eta)$ given by \eqref{E32b}
  ($\omega_{}(v,\eta)$, \eqref{omSYUHP}) can be
  obtained from the \Ka~ potentials 
  \eqref{V1} (respectively \eqref{V2}) using a formula of the type \eqref{KALP2}
  \begin{subequations}
    \begin{align}
    f(w,\bar{w},\eta,\bar{\eta})
      &=-2k\log\frac{P}{w}+f(w)+g(\bar{w})+\nu\eta\bar{\eta}+f'(\eta)+g'(\bar{\eta}), \label{V1}\\
  f(v,\bar{v},\eta,\bar{\eta}) & =-2k\log\frac{v-\bar{v}}{2\ii} + f_1(v)+g_1(\bar{v})+\nu\eta\bar{\eta}+ f'(\eta)+g'(\bar{\eta}).\label{V2}
    \end{align}
    \end{subequations}
   \end{Remark}

\subsection{Berry phase on homogenous \Ka~ manifolds}

\begin{Proposition}\label{PR44}
  Let $H$ be the Hamiltonian of a quantum system
  $(\Psi, \got{H}, (,))$ on the homogeneous 
manifold $M=G/H$ governed by the
Schr\"odinger equation
\[
  H\Psi= \ii \dot{\Psi}.
\]
Let us introduce the notation
\begin{equation}\label{PPSI}
\Psi =e^{\ii \varphi}\tilde{e}_z, \quad \varphi\in [0,2\pi).
\end{equation}
Then the phase   $\varphi$ is the sum \cite{swA}
\[
  \varphi= \varphi_D+\varphi_B\]
    of the dynamical $\varphi_D$ and the non-adiabatic Berry phase 
    $\varphi_B$, where
    \[
      \varphi_D=-\int  {\mathcal  H}(t) \dd t,
    \]
     and ${\mathcal  H} $ is the energy function    attached to the Hamiltonian $H$
     \begin{equation}\label{ENN}
       {\mathcal H} =(\tilde{e}_z
        |H| \tilde{e}_z).\end{equation}

          The Berry phase is       the integral of the  one-form $A_B$,
          called  {\bf  Berry connection} 
    \begin{equation}\label{BF}
      \varphi_B = \oint A_B,
    \end{equation}
    where
    \begin{equation}\label{BCON} \begin{split}A_B& \!=\!\frac{\ii}{2}\sum_{\alpha\in
        \Delta_{+n}}
      (\dd z_{\alpha}\pa_{\alpha}\!-\!\dd {\bar{z}_{\alpha}}\bar{\pa}_{\alpha})\log
      (e_z,e_z)
      \!=\!-\Im \theta_L, \\  \theta_L&:\!=\! \sum_{\alpha\in
        \Delta_{+n}}\pa_\alpha f(z,\bar{z}) \dd {z}_{\alpha}\!=\!\sum_{\alpha\in
        \Delta_{+n}}\pa_{\alpha}\log(e_z,e_z) \dd z_{\alpha} \!=\!
      \sum_{\alpha\in
        \Delta_{+n}}\frac{\pa_{\alpha}(e_z,e_z)}{(e_z,e_z)}\dd z_{\alpha},
      \end{split}
      \end{equation}
     and $f$ is the \Ka~
     potential defined in \eqref{KALP1}.
     The Berry phase depend on the path  and not on the Hamiltonian.
     Closed paths in $M$ imply line integral
over connection on the closed paths and are obtained
through horizontal lift. If the motion is done on a
closed path in M, it generates in the fiber in M the holonomy
\begin{equation}\label{HOL}
\beta= \oint A_B=\int_S \dd A_B,
\end{equation}
where $\dd A_B$ is the curvature of the fiber bundle, a realisation of
the  two-form $V$ of Simon \cite{BS},
\begin{equation}\label{DDP3}
  \begin{split}
  \dd A_B& =\frac{\ii}{2}\sum_{\alpha,\beta} (-\frac{\pa^2f}{\pa {z}_{\beta}\pa
    \bar{z}_{\alpha}}\dd z_{\beta}\wedge\dd \bar{z}_{\alpha}
  +\frac{\pa^2 f}{\pa\bar{z}_{\beta}\pa z_{\alpha}}\dd
  \bar{z}_{\beta}\wedge \dd z_{\alpha})\\ &=
  -\ii \sum_{\alpha,\beta}\frac{\pa^2\log
    (e_z,e_z)}{\pa z_{\alpha}\pa \bar{z}_{\beta}}\dd z_{\alpha}\wedge
  \dd \bar{z}_{\beta}=-\omega_M(z,\bar{z}).
\end{split}
\end{equation}

  \begin{proof}
    The Proposition is taken from \cite[(4.17)]{sbcag},
        \cite[Proposition]{sbl}, \cite[corrected Proposition 4.1]{FC}, see also \cite[(15)]{GO}.  The
        expresion \eqref{BF}  of the Berry phase corresponds to the
        parallel transport, i.e. the vector 
        \begin{equation}\label{PSIBAR}
          |\underline{Z})=e^{\ii \varphi_B}\tilde{e}_z, \quad
\tilde{e}_z:=(e_z,e_z)^{-\frac{1}{2}}e_z
        \end{equation}
in \eqref{PPSI} has the property  that
 $(\underline{Z},\dot{\underline{Z}}) =0$
\cite[page 2365]{sbl} and in  the proof it is used  the relation
\begin{equation}\label{SUMP}
  \dot e_z=\sum_{\alpha}\frac{\pa e_z}{\pa
    z_{\alpha}}\dot  z_{\alpha}\quad \text{or}~~\dd e_{z}=\pa \dd e_z=
  \sum_{\alpha}\frac{\pa e_z}{\pa z_{\alpha}}\dd z_{\alpha}.
\end{equation}
The proof of the expression \eqref{DDP3}
is a consequence of the relation:
\[
  f= \sum_{j=1}^nf_j\dd x_j\Longrightarrow \dd f= \sum_{j=1}^n\sum_{i=1}^n
  \frac{\pa f_j}{\pa x_i}\dd x_i\wedge\dd x_j,
\]
where $f$ is a smooth  function $x_1,\dots,x_n$. The last expression
of $\dd A_B$ in \eqref{DDP3} is in the convention \eqref{KALP2}.

\end{proof}
\begin{Remark}\label{RRR}
  Equation \eqref{BCON} of $A_B$  can be written with formula \eqref{SUMP}
  as
  \begin{equation}\label{ruc}
    A_B=-\Im\frac{(e_z|\pa|e_z)}{(e_z,e_z)}, ~~\pa f=\frac{\pa f}{\pa
        z_{\alpha}}\dd z_{\alpha}.
    \end{equation}

    Equation  \eqref{ruc} is exactly \cite[(16) page 10]{swA} or \cite[(2.56)]{CH}
    \begin{equation}
A^{(n)}=- \Im\frac{<n|\pa|n>}{<n|n>}, \quad <n|n>=1.
   \end{equation} 

Equation \eqref{DDP3} of $\dd A_B$ can be written as
    \[
      \dd A^{(n)} =-\Im \frac{(\pa e_z|\wedge|\pa e_z)}{(e_z,e_z)}.
      \]
Equation \eqref{DDP3} of $\dd A^{(n)}$ can be written as  \cite[(13) page
  10]{swA} or  \cite[(2.62), (2.63)]{CH}
\begin{equation}
  \begin{split}
    F^{(n)}& =\dd A^{(n)}=-\Im\frac{<\dd n|\wedge|\dd n>}{<n|n>}
      =\frac{1}{2}\frac{F^{(n)}_{ij}}{<n|n>}\dd
x_i\wedge \dd x_j\\ &=
-\Im (\frac{<\pa_i n|\pa_j n>-<\pa _jn|\pa_i n>}{<n|n>}) \dd x_i\wedge \dd x_j, <n|n>=1.
\end{split}
\end{equation}
\end{Remark}
\end{Proposition}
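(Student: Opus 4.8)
The target is to verify that the four displayed formulas of Remark \ref{RRR} are rewritings of \eqref{BCON} and \eqref{DDP3}, so the plan has two ingredients: an algebraic identity coming from the holomorphy of the Perelomov vector, and the bookkeeping that passes from the un-normalized section $e_z$ to the normalized one $\tilde e_z$ used in the physics references. The algebraic input is that $e_z=\exp\bigl(\sum_\phi z_\phi{\mb{X}}^+_\phi\bigr)e_0$ is holomorphic in $z$, hence $\bar\pa_\alpha e_z=0$; together with the convention that $(\cdot,\cdot)$ is antilinear in the first argument this forces $\pa_\alpha=\pa/\pa z_\alpha$ to hit only the second slot of $(e_z,e_z)$ and $\bar\pa_\beta$ only the first, so that $\pa_\alpha(e_z,e_z)=(e_z,\pa_\alpha e_z)$, $\bar\pa_\beta(e_z,e_z)=(\pa_\beta e_z,e_z)$ and $\pa_\alpha\bar\pa_\beta(e_z,e_z)=(\pa_\beta e_z,\pa_\alpha e_z)$. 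First I would record these three identities.

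For the connection I would substitute the first identity together with the notation $\pa e_z=\sum_\alpha\pa_\alpha e_z\,\dd z_\alpha$ of \eqref{SUMP} into the definition of $\theta_L$ in \eqref{BCON}, which collapses it to $\theta_L=(e_z|\pa|e_z)/(e_z,e_z)$; since $f=\log(e_z,e_z)$ is real, $A_B=\tfrac{\ii}{2}(\pa f-\bar\pa f)=-\Im\pa f=-\Im\theta_L$, which is precisely \eqref{ruc}. To match $A^{(n)}=-\Im\langle n|\pa|n\rangle/\langle n|n\rangle$ of \cite[(16)]{swA}, \cite[(2.56)]{CH} I would put $\tilde e_z=(e_z,e_z)^{-1/2}e_z$ and verify the one-line identity $(e_z,\dd e_z)/(e_z,e_z)-(\tilde e_z,\dd\tilde e_z)=\tfrac12\,\dd\log(e_z,e_z)$: the right-hand side is real and exact, hence annihilated by $-\Im$, so $A_B$ is independent of the choice of section, and with $(\tilde e_z,\tilde e_z)=\langle n|n\rangle=1$ it coincides with the quoted expression.

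For the curvature I would feed $\pa_\alpha\bar\pa_\beta(e_z,e_z)=(\pa_\beta e_z,\pa_\alpha e_z)$ into $h_{\alpha\bar\beta}=\pa_\alpha\bar\pa_\beta\log(e_z,e_z)$, obtaining the quantum-geometric-tensor shape $h_{\alpha\bar\beta}=\frac{(\pa_\beta e_z,\pa_\alpha e_z)}{(e_z,e_z)}-\frac{(\pa_\beta e_z,e_z)\,(e_z,\pa_\alpha e_z)}{(e_z,e_z)^2}$, and then use the last line of \eqref{DDP3}, $\dd A_B=-\omega_M=-\ii\sum_{\alpha\beta}h_{\alpha\bar\beta}\,\dd z_\alpha\wedge\dd\bar z_\beta$. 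Up to the normalization of the coherent-state section, the first term of $h_{\alpha\bar\beta}$ is $\langle\pa_\beta n|\pa_\alpha n\rangle$ and the subtracted term is $\langle\pa_\beta n|n\rangle\langle n|\pa_\alpha n\rangle$, so the right-hand side is $-\Im$ of the gauge-invariant wedge $(\pa e_z|\wedge|\pa e_z)/(e_z,e_z)$ evaluated with the normalized vector $\tilde e_z$; this is the claimed $\dd A^{(n)}=-\Im(\pa e_z|\wedge|\pa e_z)/(e_z,e_z)$, and expanding $z_\alpha=x_\alpha+\ii y_\alpha$ and sorting by $\dd x^i\wedge\dd x^j$ gives $F^{(n)}_{ij}=-\Im(\langle\pa_i n|\pa_j n\rangle-\langle\pa_j n|\pa_i n\rangle)$ of \cite[(13)]{swA}, \cite[(2.62),(2.63)]{CH}. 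The main obstacle, and the only delicate point, is precisely this normalization/gauge bookkeeping: whereas \eqref{ruc} holds verbatim for the holomorphic un-normalized $e_z$, the wedge in the curvature formula must be understood with the normalized section (equivalently the horizontal lift of the fibre bundle), since $h_{\alpha\bar\beta}$ carries the Fubini--Study subtraction $-(\pa_\beta e_z,e_z)(e_z,\pa_\alpha e_z)/(e_z,e_z)^2$, which is reproduced only when $e_z$ is replaced by $\tilde e_z$; granting this, all four identities of Remark \ref{RRR} follow from the three holomorphy formulas together with the reality of $f=\log(e_z,e_z)$.
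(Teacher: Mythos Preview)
Your proposal is correct and rests on the same key ingredient the paper invokes, namely the holomorphy of the Perelomov vector encoded in \eqref{SUMP}: from $\bar\pa_\alpha e_z=0$ together with the antilinearity convention you correctly derive $\pa_\alpha(e_z,e_z)=(e_z,\pa_\alpha e_z)$ and its conjugate, which immediately yields \eqref{ruc}, and then the exterior derivative gives \eqref{DDP3}. The paper's own argument is essentially a pointer to \cite{sbcag,sbl,FC,GO} plus the parallel-transport characterization $(\underline{Z},\dot{\underline{Z}})=0$ and the exterior-derivative rule; it does not spell out the normalization bookkeeping you carry out.

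Your treatment is in fact more careful on one point the paper leaves implicit: you observe that the curvature formula $\dd A^{(n)}=-\Im\,(\pa e_z|\wedge|\pa e_z)/(e_z,e_z)$ matches the physics expression $F^{(n)}_{ij}=-\Im(\langle\pa_i n|\pa_j n\rangle-\langle\pa_j n|\pa_i n\rangle)$ only after passing to the normalized section $\tilde e_z$, because $h_{\alpha\bar\beta}=\pa_\alpha\bar\pa_\beta\log(e_z,e_z)$ contains the Fubini--Study subtraction term $-(\,\pa_\beta e_z,e_z)(e_z,\pa_\alpha e_z)/(e_z,e_z)^2$. This is a genuine subtlety, and your one-line gauge identity $(e_z,\dd e_z)/(e_z,e_z)-(\tilde e_z,\dd\tilde e_z)=\tfrac12\,\dd\log(e_z,e_z)\in\R$ is exactly the right way to see why $A_B$ itself is insensitive to the choice while the wedge expression for $\dd A_B$ requires the normalized (horizontally lifted) vector to reproduce the quoted physics formulas verbatim.
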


\subsection{Linear Hamiltonian in the generators of the Jacobi  group
  $G^J_1(\R)$}\label{43} The content of the following Remark is mostly
extracted from \cite[\S~4, Lemma 4.1]{FC}, \cite[\S~4.3]{SB22}

\begin{Remark}
  Let us consider a linear Hermitian Hamiltonian $\mb{H}$
in the generators  of the Jacobi group $G^J_1$
\begin{equation}\label{LH}
  \mb{H}=\epsilon_a\mb{a}+\bar{\epsilon}_a\mb{a}^{\dagger}+\epsilon_0\mb{K}_0+\epsilon_+\mb{K}_+
  +\epsilon_-\mb{K}_-,
\end{equation}
where
\[
  \bar{\epsilon}_+=\epsilon_-,~~ \epsilon_a:=a+\ii b,~~\epsilon_+:=m-\ii n, ~~\epsilon_0:=2c,~~
  a,b,c,m,n \in\R.
  \]
The energy function $\mc{H}$ \eqref{ENN} associated to the
 Hamiltonian \eqref{LH} expressed in the variables $(\eta, v)$
 splits into  the sum of two independent functions
\begin{equation}\label{hsum}
  \mc{H}(\eta,v)=\mc{H}(\eta)+\mc{H}(v),\quad v=x+\ii y, ~y>0,~\eta=q+\ii p, 
\end{equation}
where
\begin{subequations}\label{sup}
\begin{align}
\mc{H}(q,p) &=\nu[(m+c)q^2+(c-m)p^2-2nqp+2(aq+bp)],\\
\mc{H}(x,y) & =\frac{k}{y}[(m+c)(x^2+y^2)-2(nx+cy)+c-m]+2kc. 
\end{align}
\end{subequations}
We particularize equations \cite[(3.7)]{SB22} to the  linear
Hamiltonian \eqref{hsum}  to which we add a term  ${h}(\kappa)$
\begin{equation}\label{HPARL}
  \mc{H}=\mc{H}(p
 ,q)+\mc{H}(x,y)+ {h}(\kappa),
\end{equation}
and we get the equations of motion  on the extended
   Siegel-Jacobi upper half-plane  organized as generalized transitive
   almost  cosymplectic
   manifold $(\tilde{\mc{X}}^J_1,\theta,\omega)$ corresponding to the energy
   function \eqref{HPARL}  
   \begin{subequations}\label{corH}
  \begin{align}
    \dot{x} & = (c+m)(-x^2+y^2)+2nx-c+m,\quad
    \dot{y} =   -2y[(c+m)x-n],\label{corHxy}\\
    \dot{q} & = (c-m)p  -qn+b-\frac{q}{2\nu}\frac{\pa
              h}{\pa \kappa}, \quad \dot{p} =
              \!-\!(m+c)q+\!np-\!a\
   \!-\!\frac{p}{2\nu}\frac{\pa{h}}{\pa \kappa}, \quad
     &  \label{corHpq}\\
    \dot{\kappa} & =
  (c+m)q^2+(c-m)p^2+aq+bp\!-2npq-\!\frac{1}{\sqrt{\delta}}
                   \mc{H}\\~~&=
    (m+c)[(1-\frac{\nu}{\sqrt{\delta}}) q^2-\frac{k}{\sqrt{\delta}} \frac{x^2+y^2}{y}]
    +(c-m)[(1-\frac{\nu}{\sqrt{\delta}})
                               p^2-\frac{k}{\sqrt{\delta}}\frac{1}{y}]\\ &
    -2(1-\frac{\nu}{\sqrt{\delta}}) npq+(1-\frac{2}{\sqrt{\delta}})(aq+bp)-2 \frac{k}{\sqrt{\delta}}\frac{x}{y}-\frac{1}{\sqrt{\delta}}h.\nonumber
    \end{align}
  \end{subequations}
 \end{Remark}
\begin{proof}
The differential action of the generators 
the Jacobi algebra   $\got{g}^J_1$ 
 is given by the formulas:
  \begin{subequations}\label{summa}
\begin{eqnarray}
& & \mb{a}=\frac{\pa}{\sqrt{\mu}\pa z};~\mb{a}^{\dagger}=\sqrt{\mu} z+w\frac{\pa}{\sqrt{\mu}\pa z} ;\\
 & & \db{K}_-=\frac{\pa}{\pa w};~\db{K}_0=k+\frac{1}{2}z\frac{\pa}{\pa z}+
w\frac{\pa}{\pa w};\\
& & \db{K}_+=\frac{1}{2}\mu z^2+2kw +zw\frac{\pa}{\pa z}+w^2\frac{\pa}{\pa
w} ,
\end{eqnarray}
\end{subequations}
where $z\in\C$, $|w|<1$.

Then  with \eqref{csu}, we get
\begin{equation}
  \frac{\pa K}{\pa z}= \nu\bar{\eta}K, \quad \frac{\pa K}{\pa w}=(2k\frac{\bar{w}}{1-w\bar{w}}+\frac{\nu}{2}\bar{\eta}^2)K,
  \end{equation}
  and $\eta$ was defined in \eqref{ENN}.

       With \eqref{sup} we find
 \begin{subequations}\label{328}
 \begin{align}
 \frac{\pa H (x,y)}{\pa x}=&\frac{2 k}{y}[(m\!+\!c)x\!-\!n];
 \frac{\pa H (x,y)}{\pa y}=\!\frac{k}{y^2}[(m\!+\!c)(y^2-x^2)\!+\!m\!-\!c\!+\!2nx];\\
\frac{\pa H (q,p)}{\pa q}=
        &2\nu[(m\!+\!c)q\!+\!np\!+\!a];\frac{\pa H (q,p)}{\pa p}\!=\!2\nu[(c\!-\!m)p\!+\!nq\!+\!b].                           
 \end{align}
 \end{subequations}

 \cite[(4.5)-(4.7)]{SB22}  and \eqref{328} imply
  \begin{subequations}
   \begin{align}
     \dot{q}_1&=k \dot{x}=A_1=y^2\frac{\pa H}{\pa y}=k[(m+c)(y^2-x^2)+m-c+2nx]\\
     \dot{q}_2&=2\nu\dot{q}=A_2=\frac{\pa H}{\pa
                p}-\frac{q}{2\nu}\frac{\pa H}{\pa \kappa}=
                2\nu[{(c-m)p-nq+b}-q]-\frac{q}{2\nu}\frac{\pa H}{\pa \kappa}\\
     \dot{p}_1&=y^{-2}\dot{y}=B_1=-\frac{1}{k}\frac{\pa H}{\pa x}=-\frac{1}{y}[(c+m)x-n]\\
     \dot{p}_2&=\dot{p}=B_2=-\frac{\nu}{2\nu}[\frac{\pa H}{\pa
                q}+p\frac{\pa H}{\pa
                k}]=-\frac{2}{2\nu}[(c+m)q-np+a]-\frac{1}{2\nu}p\frac{\pa
                H}{\pa k}\\
     \dot{\kappa}&=\frac{1}{2\nu}(p\frac{\pa H}{\pa p}+q\frac{\pa
                   H}{\pa q})-\frac{1}{\sqrt{\delta}}H,
      \end{align}
    \end{subequations}
  and \eqref{corH} are  proved.
  \end{proof}

        \subsection{\Ka~two-forms,  Christofell's symbols  and
          connections matrices on $\mc{D}_1$ and $\mc{X}_1$}\label{33}
       a) The \Ka~two form on the Siegel disk  $\mc{D}_1$ corresponding
        to the \Ka~ potential \[ f(w)=-2k\log P  
        \]
        is \cite[(7.21)]{jac1}
        \begin{equation}\label{jac111}
          -\ii\omega(w,\bar{w})=\frac{2k}{P^2}\dd w\wedge \dd \bar{w}. 
     \end{equation}
      If we make the change of variables \eqref{210b} $w\rightarrow v$
      and apply
\begin{equation}\label{dwv}
   \dd w=2\ii \frac{\dd v }{(v+i)^2}, 
 \end{equation}
      we get
      \begin{subequations}\label{OMMM}
        \begin{align}
          -\ii \omega(v,\bar{v}) & = -\frac{2k}{(v-\bar{v})^2}\dd v\wedge \dd \bar{v},= \frac{k}{2y^2}\dd v\wedge \dd \bar{v},\label{314a}\\
          \omega(x,y) &= \frac{k}{y^2}\dd x\wedge \dd y, ~~v=x=\ii y.
        \end{align}
        \end{subequations}
        Alternatively, if in \eqref{jac111} we introduce \eqref{WVAB} and
        then \eqref{DALPHA}, we  get again \eqref{OMMM}.

      b)   From \eqref{kk1} we get
        \[
          h_{\mc{D}^J_1}(w)=\frac{2k}{P^2}.
        \]
        With formula \eqref{CRISTU} we get
        \begin{equation}\label{WWW}
          \Gamma^w_{ww}=\frac{2\bar{w}}{P}
        \end{equation}
        We consider formula \eqref{ULTRAN} of the transformation
        $w
        \rightarrow v$ and we write  this change of variables as
        \begin{equation}\label{dir}
         \Gamma^v_{vv}=\Gamma^w_{ww}\frac{\pa w}{\pa v}+ \frac{\pa^2
           w}{\pa v^2}\frac{\pa v}{\pa w}.
          \end{equation}
          With \eqref{dwv} we get $\frac{\pa w}{\pa v} $ and then
          \[
            \frac{\pa^2 w}{\pa v^2} =-\frac{4\ii}{(v+\ii)^3} .
          \]
          We get with \eqref{dir}
          \begin{equation}\label{VVV}
            \Gamma^v_{vv}=\frac{2}{\bar {v}-v}=\frac{\ii}{y},
          \end{equation}
          which is correct, because if we apply \eqref{CRISTU} to
          \begin{equation}
            h_{\mc{X}^J_1}(v)=-\frac{2k}{(v-\bar{v})^2}=\frac{k}{2y^2},
            \end{equation}
            we get \eqref{VVV}.

We have \begin{equation}\label{VBARV}
v-\bar{v}=2\ii \frac{P}{(1-w)(1-\bar{w})}.
\end{equation}

            Inverse, if we consider the change of variables
            $v\rightarrow w$ and apply the inverse formula to
            \eqref{dir}
            \[
              \Gamma^w_{ww}=\Gamma^v_{vv} \frac{\pa v}{\pa w}
              +\frac{\pa^2v}{\pa w^2} \frac{\pa w }{\pa v}.
            \]
            starting with \eqref{VVV} we get \eqref{WWW}.

c) With formula \eqref{VVV} we get
\begin{equation}\theta^v_v(v)=\Gamma^v_{vv}\dd
  v=\frac{2}{v-\bar{v}}\dd v.
 \end{equation}
  We apply relation \eqref{DD1} to the change of variables
  $v \rightarrow w$ \eqref{ULTRAN}. We have
 \[
A(w)=
  \frac{2\ii}{(1-w)^2}   ,\quad \dd A A^{-1}=2\frac{\dd w }{1-w}    
\]
and     we should have
\begin{equation}
\omega'=\omega_{J^1}(w)=2\frac{\dd
  w}{1-w}+\frac{2}{v-\bar{v}}\frac{2\ii}{(1-w)^2}\dd w.
\end{equation}
With \eqref{VBARV} we find
\[\theta^w_{ww} =\Gamma^w_{ww}\dd w= 2\frac{\bar{w}}{1-w\bar{w}}\dd w,
\]
in accord with \eqref{WWW}.
\subsection{Connection matrices  on $\mc{D}^J_1$ and $\mc{X}^J_1$ }\label{CON}

Christoffel's symbols $\Gamma$-s have  the expressions \cite[(38)]{GAB}
\begin{equation}\label{GAMM}
\begin{split}
\Gamma^z_{zz}  & =-\lambda\bar{\eta};~\Gamma^w_{zz}=\lambda; ~
\Gamma^z_{zw}=-\lambda\bar{\eta}^2+\frac{\bar{w}}{P};\\
 \Gamma^w_{wz} & =\lambda\bar{\eta};~
 \Gamma^z_{ww}=-\lambda\bar{\eta}^3; ~ \Gamma^w_{ww} =
 \lambda\bar{\eta}^2+2\frac{\bar{w}}{P}, ~ \lambda = \frac{\nu}{2k}.
\end{split}
\end{equation}

The connection matrix  (form) $\theta_{\mc{D}^J_1} $ on $\mc{D}^J_1$
in $(w,z)$ 
is \cite[(40)]{GAB}
\begin{subequations}
  \begin{align}\theta_{\mc{D}^J_1}(w,z):& =\left(\begin{array}{cc} \theta^w_w & \theta^z_w\\ \theta^w_z &
                                                                  \theta^z_z\end{array}\right)
  =\left(\begin{array}{cc} \Gamma^w_{wz}\dd
                                                                          z+\Gamma^w_{ww}\dd w &
                                                                                                 \Gamma^z_{wz}\dd z+\Gamma^z_{ww}\dd w\\
                                                                                   \Gamma^w_{zz}\dd
                                                                       z+\Gamma^w_{zw}\dd
                                                                       w& \Gamma^z_{zz}\dd
                                                                       z
                                                                       +\Gamma^z_{zw}\dd
                                                                       w
                                                                         \end{array}\right)\label{33ab63}\\
~~& =\left(\begin{array}{cc} \lambda\bar{\eta}\mc{A}+2\frac{\bar{w}}{P}\dd
                                               w
                          & -\lambda\bar{\eta}^2\mc{A} +\frac{\bar{w}}{P}\dd
                          z\\ \lambda \mc{A} & -\lambda\bar{\eta}\mc{A}+\frac{\bar{w}}{P}\dd
                          w
                          \end{array}\right). 
  \end{align}\end{subequations}

In the variables $(u,v)\in(\C,\mc{X}_1)$ the  geodesic equations 
\eqref{geoD1}  for the metric \eqref{kmb} read \cite[(57)]{SB21}
\begin{equation}\label{geomic}
 \left\{
 \begin{array}{l}
 \frac{\dd^2 u}{\dd t^2}+\Gamma^u_{uu}\left(\frac{\dd u}{\dd
     t}\right)^2 +
2\Gamma^u_{uv}\frac{\dd u}{\dd t}  \frac{\dd v}{\dd t} +\Gamma
^u_{vv}\left(\frac{\dd v}{\dd t}\right)^2 =0   ;\\
  \frac{\dd^2 v}{\dd t^2}+\Gamma^v_{uu}\left(\frac{\dd u}{\dd
     t}\right)^2 +
2\Gamma^v_{uv}\frac{\dd u}{\dd t}  \frac{\dd v}{\dd t} +\Gamma
^v_{vv}\left(\frac{\dd v}{\dd t}\right)^2=0 .
     \end{array}
 \right.
\end{equation}
      
  Christoffel's symbols $\Gamma$-s  in $(u,v)$ corresponding to the
  Riemannian metric associated to the \Ka~two-form \eqref{BFR} are
  extracted from \cite[(62)]{SB21} with corrections

\begin{equation}\label{XGAMMM}
\begin{split}
\Gamma^u_{uu}  & =\frac{\ii}{\iota}r,~\Gamma^v_{uu}=\frac{\ii}{\iota}, ~
\Gamma^u_{uv}= \frac{\ii}{2\iota}(\frac{\iota}{y}-2r^2);\\
 \Gamma^v_{vu} & =-\frac{\ii}{\iota}r,~
 \Gamma^u_{vv}=\frac{\ii}{\iota}r^3, ~ \Gamma^v_{vv} =
\frac{\ii}{\iota}(\frac{\iota}{y}+r^2),
\end{split}
\end{equation}
where
\begin{equation}\label{CUCUV}
  \iota=\frac{k}{\nu}=\frac{1}{2\lambda},\quad r=\frac{n}{y}. 
\end{equation}
Equations \eqref{geomic} with the $\Gamma$-s \eqref{XGAMMM} lead to the same equations as \cite[(53)]{SB21}
\begin{subequations}\label{ECIV_2}
\begin{align}
& \ddot{v}+\frac{\ii}{\iota}\left[\dot{u}^2-2r\dot{u}\dot{v}+
(\frac{\iota}{y}+r^2)\dot{v}^2\right]=0,\\
& \ddot{u}+\frac{\ii}{\iota}\left[
  r\dot{u}^2+(\frac{\iota}{y}-2r^2)\dot{u}\dot{v}+r^3\dot{v}^2
 \right]=0.
\end{align}                                             
\end{subequations}

We check the value of $\Gamma^u_{uv}$. We have
\[
  \left\{
    \begin{array}{cc}h_{v\bar{u}}\Gamma^v_{vu}+h_{u\bar{u}}\Gamma^u_{vu}&
 =\frac{\pa h_{u\bar{u}}}{\pa v}\\h_{v\bar{v}}\Gamma^v_{vu}+h_{u\bar{v}}\Gamma^u_{vu} & =\frac{\pa
          h_{u\bar{v}}}{\pa v}\end{array} \right. .\]
\[
  \left\{
    \begin{array}{cc}-\frac{\nu r }{y}\Gamma^u_{vu}+\frac{\nu}{y}\Gamma^u_{vu}&
 =\frac{\ii \nu}{2y^2}\\ (\frac{k}{2y^2}+\frac{\nu
      r^2}{y})\Gamma^v_{vu}-\frac{\nu r}{y}\Gamma^u_{vu} & =-\frac{\ii
                                                           \nu r}{y^2}
    \end{array} \right. .\]
With the notation
\[
    \Delta=-\left\|\begin{array}{cc}h_{u\bar{u}}& h_{v\bar{u}}\\
               h_{u\bar{v}}& h_{v\bar{v}}\end{array} \right\|=-\frac{\nu
               k}{2y^3};\] \[\Delta_1= \left\| \begin{array}{cc}\frac{\ii
                                        \nu}{2y^2}& \frac{\nu
                                                    }{y}\\-\frac{\ii
                                        \nu r}{y^2} & -\frac{\nu r}{y}
                                                     \end{array}\right\|=\frac{\ii
                                                  \nu ^2r}{2y^3};\]
   \[\Delta_2=-\left\|\begin{array}{cc}\frac{\ii \nu}{2y^2}
                       &-\frac{\nu r}{y}\\-\frac{\ii \nu
                       r}{y^2}&\frac{k}{2y^2}+\frac{\nu r^2}{y}
                     \end{array}\right\|
                   =-\ii \frac{\nu k}{4y^4} +\frac{\ii\nu^2r^2}{2y^3},
                 \]
                 we get
              \[
      \Gamma^v_{vu}=\frac{\Delta_1}{\Delta}=-\frac{\nu r}{k};
      ~~~\Gamma^u_{vu}=\frac{\ii}{2y}-\frac{\ii}{\iota}r^2
      .    \]

    We also have
    \[
      \Gamma^u_{vu}=h^{v\bar{u}}\frac{\pa h_{u\bar{v}}}{\pa
        v}+h^{u\bar{u}}\frac{\pa h_{u\bar{u}}}{\pa v}=
      \frac{2y^2r}{k}\frac{\pa }{\pa v}(-\frac{\nu
        r}{y})+[\frac{y}{\nu}+\frac{\nu r^2y^2}{k}\frac{\pa}{\pa v}(\frac{\nu}{y})]=\frac{1}{2}\frac{\ii}{\iota}(\frac{\iota}{y}-2r^2).
      \]

  We get for the connection matrix on $\mc{X}^J_1$ in the variables
  $(v,u)$ the expression  
  \begin{subequations}\label{338}
    \begin{align}
    \theta_{\mc{X}^J_1}(v,u) &=\left(\begin{array}{cc}\theta^v_v&
                                                             \theta^u_v\\
                                \theta^v_u&
                                            \theta^u_u\end{array}\right)=\left(
                                            \begin{array}{cc}
                                             \Gamma^v_{vu}\dd
                                                                       u+\Gamma^v_{vv}\dd v &    \Gamma^u_{vu}\dd
                                              u+\Gamma^u_{vv}\dd v   
                                              \\ \Gamma^v_{uu}\dd
                                                                    u+
                                                                    \Gamma
                                                                    ^v_{uv}\dd
                                              v & \Gamma^u_{uu}\dd
                                              u+\Gamma^u_{uv}\dd v 
                                             \end{array}\right) \label{338a} \\
                     ~~ &\! =\!\frac{\ii}{\iota}
                     \left(\begin{array}{cc}
                       -r\mc{B}+\frac{\iota}{y}\dd v    &
                                                          -r^2\mc{B}+\frac{\iota}{2y}
                                                         \\
  \mc{B}  & r\mc{B}+\frac{\iota}{2y}\dd v
                                      \end{array}\right)\! \\ &=\!\frac{\ii}{\iota}\!
                     \left[\left(\begin{array}{cc} -r &-r^2 \\1 &
                                                         r\end{array}\right)\mc{B}\!+\!
  \frac{\iota}{2y}\left(\begin{array}{cc} 2 \dd v& \dd u \\0 & \dd
                                                              v\end{array}\right)\!\right]. \label{338b}
                                                           \end{align}
  \end{subequations}
  Now we calculate the connection matrix on the Siegel-Jacobi disc in
  the variables $(x,y,q,p)$.

The non-zero  Christoffel's symbols corresponding to the Riemannian
metric
$\dd\!s^2_{\mc{X}^J_1}\!(\!x\!,y\!,\!q,\!p\!)$
\eqref{METRS2} on
 the Siegel-Jacobi upper half-plane are \cite[(73)]{SB21}
\begin{equation}
  \begin{array}{lllll}\label{GSC}
    \Gamma^{x}_{xy}=-\frac{1}{y}
  &\Gamma^{x}_{pp}=-\ep xy
  &\Gamma^{x}_{pq}= -\frac{1}{2}\ep y & & \\
   \Gamma^y_{xx}= \frac{1}{y}&
 \Gamma^y_{yy}=-\frac{1}{y}&\Gamma^y_{pp}=\frac{\ep}{2}(x^2\!\!-\!\!y^2)&\Gamma^y_{pq}=\frac{\ep}{2}x
  &\Gamma^y_{qq}=\frac{\ep}{2} \\
  \Gamma^{p}_{xp}=\frac{1}{2}\frac{x}{y^2} &  \Gamma^{p}_{xq}=
 \frac{1}{2}\frac{1}{y^2}&\Gamma^{p}_{yp}= \frac{1}{2y} & & \\
  \Gamma^q_{xp}=\frac{y^2-x^2}{2y^2} &\Gamma^q_{xq}= -\frac{x}{2y^2}&\Gamma^q_{yp}=-\frac{x}{y}&\Gamma^q_{yq}= -\frac{1}{2y} &  \end{array} ,
\end{equation}
where
\begin{equation}\label{Tep}
\epsilon=\frac{\gamma}{\alpha}=2\frac{\nu}{k}
.\end{equation}

Let
\begin{equation}\label{thetam}
    \!\theta_{\mc{X}^J_1} (x,y,q,p)\!=
    \!\left(\begin{array}{cccc}\theta^ x_x &\theta^ x_y &\theta^ x_q &\theta^ x_p \\
   \theta^y_x & \theta^y_y &\theta^y_q& \theta^y_p\\
  \theta^q_ x&         \theta^q_y&  \theta^q_q  &\theta^q_p\\
  \theta^p_x &  \theta^p_ y&  \theta^p_ q&  \theta^p_p 
          \end{array}\right)\! .\end{equation}
We find for the matrix elements of \eqref{thetam} the values
\begin{subequations}\nonumber
  \begin{align}
 \theta_{\mc{X}^J_1}  =&\left(\begin{array}{cccc}\!-\!\frac{\dd y}{y} &
 \!-\!\frac{\dd x}{y}& \!-\frac{\ep}{2}y\dd p& \!-\ep xy \dd p\!-\!\frac{\ep
                                                      y}{2}\dd q \\
                              \frac{\dd x}{y}&\!-\!\frac{\dd y}{y} &
                                                                  \frac{\ep}{2}\dd q    
                                                                 \!+\!\frac{\ep}{2}x\dd p &
                                                                  \!\frac{\ep}{2}x\dd
                                  q\!+\!\frac{\ep}{2}(x^2-y^2)\dd p\\
  \!-\!\frac{x}{2y^2}\dd q\!+\!\frac{y^2\!-\!x^2}{2y^2}\dd p 
                                  &\!-\!\frac{x}{y}\dd y
  &\! -\!\frac{x}{2y^2}\dd x &
  \frac{y^2\!-\!x^2}{2y^2}\dd x\!-\!\frac{x}{y}\dd y\\
                                  \frac{x}{2y^2}\dd p\!+\!\frac{1}{2y^2}
\dd q & \frac{1}{2y}\dd p& \frac{1}{2y^2} \dd x &
                                                     \frac{x}{2y^2}\dd
                                                     x\!+\!\frac{1}{2y}\dd y                               \end{array}\right)\\
                                                    =&
    \left(\begin{array}{cccc} (\!0\!,-\frac{1}{y},\!0\!,\!0)
    & (-\frac{1}{y}, \!0\!,\!0,\!0)
    &(\!0,\!0,\!0,\!-\!\frac{\ep y}{2})
    &(\!0\!,\!0\!,-\!\frac{\ep}{2}y,-\ep xy)\\
     (\frac{1}{y}\!, \!0\!,\!0\!,\!0)
 & (\!0\!,\!-\frac{1}{y}\!,\!0\!,\!0\!)
 &(\!0\!,\!0\!,\!\frac{\ep}{2}\!,\!\frac{\ep x}{2})\!
 &(\!0\!,0\!,\frac{\ep x}{2}\!,\!
   \frac{\ep}{2} (x^2\!-\! y^2))\\
     (\!0\!,\!0\!,-\frac{x}{2y^2}\!,\!\frac{y^2\!-\!x^2}{2y^2})
   &(\!0\!,-\frac{x}{y},\!0\!,\!0\!)
   & (\!-\frac{x}{2y^2},\!0\!,\!0\!,\!0\!)&
                              (\frac{y^2\!-\!x^2}{2y^2},-\frac{x}{y},\!0\!,\!0)\!\\
                                                                       (\!0,\!0,\frac{1}{2y^2}\!,\!\frac{x}{2y^2})&
                                                                                                          (\!0\!,\!0\!,\!0\!,\!\frac{1}{2y})
                                                                       &(\frac{1}{2y^2},\!0\!,\!0\!,\!0)\!&
                                                                                                            (\frac{x}{2y^2},\frac{1}{2y},\!0\!,\!0\!)\end{array}\right)
 \!\otimes\! \left(\begin{array}{c}\dd x\\ \dd y \\ \dd q\\ \dd  p\end{array}\right) \end{align}\end{subequations}

\subsection{Check of formulae \eqref{522a}}.

  We write \eqref{522a} as
          \[
            \omega'(v,u)=\dd J_{WU} J^{-1}_{WU}+J_{WU}\omega(w,z)J^{-1}_{WU},
        \]
        where $(w, z)$ ($(v,u)$)  are the ``old'' coordinates
        (respectively ``new'' coordinates).
        If denote $J_{WU}$ with $A=A((w,z)\rightarrow (v,u))$, we have
        \[
          \left(\begin{array}{c}\frac{\pa}{\pa v}\\ \frac{\pa}{\pa
                  u}\end{array}\right)=
            A \left(\begin{array}{c}\frac{\pa}{\pa w}\\ \frac{\pa}{\pa
                      z}\end{array}\right)
              =            \left(\begin{array}{cc}
                                                \frac{\pa w}{\pa v} &
                                                                      \frac{\pa
                                                                      z}{\pa
                                                                      v}\\\frac{\pa
                                                w}{\pa u} & \frac{\pa
                                                            z}{\pa
                                                            u}\end{array}\right)
                                                      \left(\begin{array}{c}\frac{\pa}{\pa
                                                              w}\\
                                                              \frac{\pa}{\pa
                                                              z}\end{array}\right)  \]
 For the partial  Cayley  transform \eqref{210a} we find
 \begin{equation}\label{341B}
   A=\left(\begin{array}{cc} \frac{2\ii}{(v+\ii)^2} &-\frac{2\ii
                                                      u}{(v+\ii)^2}
           \\0  & \frac{2\ii}{v+\ii}\end{array}\right); ~~~
      A^{-1}=-\frac{\ii}{2}(v+\ii)
      \left(\begin{array}{cc}v+\ii & u\\0 &1\end{array}\right).
    \end{equation}
    
    \begin{equation}\label{341bb}
      \dd A=
       \frac{2\ii }{(v+\ii)^2}\left(\begin{array}{cc}-\frac{2\dd
                                      v}{v+\ii}&\frac{-(v+\ii)\dd
                                                u+2u\dd
                                                v}{v+\ii}\\ 0
                                              &-\dd
                                                v\end{array}\right).\end{equation}
  \begin{equation}\label{AAM}
\dd A\cdot  A^{-1}= -\frac{1}{v+\ii}\left(\begin{array}{cc}2 \dd v &
                                                                     \dd u
                                            \\ 0 & \dd v\end{array}\right)
                                      \end{equation}
\begin{equation}\label{342}
  J_{UW}\omega(w,z)J^{-1}_{UW}\!=\!\left(\begin{array}{cc}
\lambda\mc{A}(\bar{\eta}-u)\!+\!2\frac{\bar{w}}{P}\dd w &
 \frac{-\lambda\mc{A}(u-\bar{\eta})^2\!+\!\frac{\bar{w}}{P}(u\dd w+\dd z)}{v+\ii} \\
  \lambda\mc{A}(v+\ii) &
                     \lambda\mc{A}(u-\bar{\eta})\!+\!\frac{\bar{w}}{P}\dd w  \end{array}   \right) .
                        \end{equation}
                        \begin{equation}\label{344}
                          \omega'(v,u)
                          =               \!\left(\begin{array}{cc}
\lambda\mc{A}(\bar{\eta}-u)\!+\!2\frac{\bar{w}}{P}\dd w -\frac{2\dd
             v}{v+\ii}&
 \frac{-\lambda\mc{A}(u-\bar{\eta})^2\!+\!\frac{\bar{w}}{P}(u\dd w+\dd z)}{v+\ii} -\frac{\dd u}{v+\ii}\\
  \lambda\mc{A}(v+\ii) &
                     \lambda\mc{A}(u-\bar{\eta})\!+\!\frac{\bar{w}}{P}\dd
                         w-\frac{\dd
                         v}{v+\ii}  \end{array}   \right) .
                         \end{equation}
                        
                                  Now we compare equations
                                  \eqref{338}, \eqref{344}.

                                  We
                                  compare firstly the terms ``21''.
                  Because of \eqref{BFR2}, \eqref{CUCUV}, it is verified that \[\frac{\ii}{\iota}\mc{B}=\lambda\mc{A}(v+\ii),
                                  \]
                                  We compare the terms ``11''.
  We should have
     \[
       2\frac{\bar{w}}{P}\dd w -2\frac{\dd v}{v+\ii}=\ii \frac{\dd v}{y}
       \]
       which is true because of \eqref{210a} and \eqref{VBARV}.

       We compare the terms ``12''.  We should have
       
  \begin{equation}\label{1212}
\frac{\ii}{2y}\dd u= - \frac{\dd u}{v+\ii}+
\frac{\bar{w}}{P}\frac{u\dd w+\dd z}{v+\ii},
\end{equation}
which  is true.

Indeed, from \eqref{210a} we have 
  \[
\dd u=\frac{(1-w)\dd z+z\dd w}{(1-w)^2}, \quad v+\ii=\frac{2\ii}{1-w}, 
    \]
    
    and also we use \[ y=\frac{1-w\bar{w}}{(1-w)(1-bar{w})}.\]
    We compare   the terms ``22'' . We should
                                  have
\[
 \frac{\bar{w}}{P}\dd w-\frac{dv}{v+\ii}=\ii \frac{\dd v}{2y},
\]
\[
  \frac{\bar{w}}{P}\dd w =
  (\frac{\ii}{2y}+\frac{1}{v+\ii}) \dd v,
\]
which is true because of
 of \eqref{210}, \eqref{VBARV}.

\subsection{Berry phase on $\mc{D}^J_1$ and $\mc{X}^J_1$ }

a) Firstly we calculate the Berry phase on $\mc{X}_1$ from the Berry phase on
$\mc{D}_1$.

The relations\eqref{210b},  \eqref{WVAB}, \eqref{TAUZ} and \eqref{NNN}
implies
\begin{equation}\label{ALFAB}
  \alpha= \frac{x^2+y^2-1}{N},\quad \beta
  =-2\frac{x}{N},\quad
 \end{equation}
With 
\[
  f=-2k \log P=-2k\log (1-|w|^2),
\]
we get with \eqref{BCON} the Berry phase on the Sigel disk $\mc{D}_1$
\begin{equation}\label{ThEETA}
A_B (w,\bar{w})=\ii k\frac{\bar{w}\dd w-w\dd \bar{w}}{P}=
2k \frac{\beta\dd \alpha-\alpha \dd \beta}{P}.  
\end{equation}
But from \eqref{ALFAB} 
\begin{subequations}\label{DALPHA}
  \begin{align}
  \dd \alpha & = 2\frac{2x(y+1)\dd x+[-x^2+(y+1)^2]\dd y}{N^2},\\ \dd
    \beta & =-2\frac{[-x^2+(y+1)^2]\dd x-2x(y+1)\dd y}{N^2},
            \end{align}
\end{subequations}
and
\begin{subequations}\label{312}
  \begin{align}
  -\alpha \dd \beta +\beta \dd \alpha 
    = & \frac{2}{N^3}
       \{ [(x^2+y^2-1)(-x^2+(y+1)^2)-4x^2(y+1)]\dd x \nonumber\\
      & + [-2x(x^2+y^2-1)(y+1)-2x(-x^2+(y+1)^2]\dd y\}.\nonumber
  \end{align}
\end{subequations}
\eqref{E32} implies for the Berry phase $ \phi_B(x,y)$
\[
  \phi_B(x,y)=\frac{k}{2}\frac{2}{N^2}\frac{-N(x^2-y^2+1)\dd
    x-2Nxy\dd y}{y},
  \]
and  Berry phase on $\mc{X}_1$ in $(x,y)$ obtained from the Berry phase on
$\mc{D}_1$  in $(\alpha,\beta) $ is
\begin{equation}\label{BNFDX}
   \phi_B(x,y)=k\frac{(-x^2+y^2-1)\dd x -2xy\dd y}{y[x^2+(y+1)^2]}.
\end{equation}

Christofell's symbols in the variables  $(x,y)$ on $\mc{X}_1$ are
extracted from \cite[(69)]{SB21}
\begin{subequations}\label{GM22}
  \begin{align}
    \Gamma^x_{xx} & = 0,\quad  \Gamma^x_{xy}  = -\frac{1}{y},\quad
                    \Gamma^x_{yy}  = 0,\\
     \Gamma^y_{xx} & = \frac{1}{y},\quad  \Gamma^y_{xy}  = 0,\quad  \Gamma^y_{yy}  = -\frac{1}{y}.
 \end{align}
 \end{subequations}

b) {\it The Berry phase on the Siegel-Jacobi disk} $\mc{D}^J_1$ {\it {in}} $(w,z)$, $(\alpha,
\beta, q, p)$.

The
starting point is the scalar product of two CS on
$\mc{D}^J_1$ \cite[(7.13b)]{jac1}
\begin{equation}\label{SCWZ}
  f(z,w)=(e_{z,w},e_{z,w})=-2k\log P+\nu F, \quad F=\frac{2z\bar{z}+\bar{w}z^2+w\bar{z}^2}{2P}.
  \end{equation}

  With \eqref{BCON} we get 
  \begin{equation}\label{AA121}
    A_B(z,w)=\frac{\ii }{2}(A(z,w)-cc),
  \end{equation}
  where \cite[\S 4.2]{FC}
  \begin{subequations}
    \begin{align}
  A(z,w) & =(2k\frac{\bar{w}}{P}+\frac{\nu}{2}\bar{\eta}^2)\dd w
           +\nu\bar{\eta}\dd z\\
      ~& = k(\Gamma^w_{ww}\dd w+2\Gamma^w_{wz}\dd z).\label{331b}
    \end{align}
    \end{subequations}

                                       With \eqref{33ab63} we rewrite \eqref{331b} as
                                       \[
                                         A(z,w)=k(\theta^w_w+\Gamma^w_{wz}\dd z),
                                       \]
                                       which is in fact formula before
                                       (4.27) in \cite{FC}.
  
  With \eqref{E32} we get for $A$ in \eqref{AA121}
  \[
    A(w,\eta)=(2k\frac{\bar{w}}{P}-\frac{\nu}{2}\bar{\eta}^2)\dd
    w+\nu\bar{\eta}(\dd \eta- w\dd \bar{\eta}).
    \]
We also have the following expression for the Berry phase
\begin{equation}
  \begin{split}
      A_B(\alpha,\beta,q,p) & =2k\frac{\beta}{1-\alpha^2-\beta^2}\dd \alpha +
      [-2k\frac{\alpha}{1-\alpha^2-\beta^2}
      +\frac{\nu}{2}(q^2-p^2)]\dd\beta\\ &-\nu[(\alpha-\beta q-1)\dd q+(\alpha q+\beta)\dd p].
    \end{split}
    \end{equation}

    c) {\it{Berry phase in} } $(u,v)$

    We use for $f(u,v)$ \eqref{222a}, $(u,v)=(m+\ii n,x+\ii y)$. We
    have
    \[
 f_v=-\frac{k}{2}\frac{1}{v-\bar{v}}+\ii \nu       r^2=\ii (
              \frac{k}{4y}+\nu\frac{n^2}{y^2}),\quad 
              f_u =-2\ii \nu r=-2\ii \nu  \frac{n}{y}.
              \]

              We get
              \begin{subequations}
                \begin{align}
                A(u,v)& =f_u\dd u + f_v \dd v= -2\ii \nu r \dd u+\ii
                        (\frac{k}{4y}+\nu r^2)\dd v\\
                  & = k[- 2\Gamma^u_{uu}\dd
                    u+(\Gamma^v_{vv}-\frac{3i}{4}\frac{1}{y})\dd v], 
                  \end{align}\end{subequations}
where we have used \eqref{XGAMMM}.
              
      \begin{equation}\label{ABXY}
        A_B(x,y,m,n)= \frac{\ii}{2}
        (A(u,v)-\bar{A}(u,v))= \frac{1}{y}\left[-(\frac{k}{4}+\nu\frac{n^2}{y})\dd
          x+2\nu n\dd m
        \right].        \end{equation}

                               d) {\it Berry phase on} $\mc{X}^J_1$ {\it in} $(v,\eta)=(x+\ii y,q+\ii p)$

      With \eqref{KK1} for $f(v,\eta)$, we have
      \[
          f_v = -2k\frac{1}{v-\bar{v}}= \ii \frac{k}{y};\quad
          f_{\eta}= -\nu (\eta-\bar{\eta})=-2\ii \nu p,
        \]
      and we get
      \[
        A_B(x,y,q,p)=-\frac{k}{y}\dd x+2\nu p\dd q.        \]

    We write
  \begin{equation}\label{ABXYY}
    A_B(\alpha,\beta,x,y)=\dd \phi_B(\alpha,\beta,x,y)=\dd \phi_{B I}+\dd \phi_{B II}+\dd \phi_{B III},
  \end{equation}
  where $\dd \phi_{B I}$, which  appears in \eqref{ThEETA}, was
  calculated as \eqref{BNFDX}, and
  \begin{subequations}\label{AA1}
    \begin{align}\dd \phi_{B II}& = \frac{\ii \nu}{2}(-\bar{\eta}^2\dd w +cc),\\
\dd \phi_{B III}&= \frac{\ii \nu}{2}[(\bar{\eta}+w\eta)\dd \eta -cc].
    \end{align}
  \end{subequations}
  We find
  \begin{subequations}\label{AA2}
    \begin{align}
       \dd \phi_{B II}& =\frac{\ii \nu}{2}[-(q-\ii p)^2(\dd \alpha
                        +\ii \dd \beta)+cc]=\nu [(q^2-p^2)\dd \beta
                        -2qp\dd \alpha],\\
      \dd \phi_{B III}&= \nu\{-[(\alpha+1)q+\beta p]\dd p+[(1-\alpha
                        )p+\beta q ]\dd q
                       \}.
      \end{align}
    \end{subequations}
    With \eqref{ALFAB}, \eqref{DALPHA} we get
\begin{equation}\label{EQ11}
      \begin{split}
      A_B&(x,y,p,q)= \frac{k}{Ny}[(-x^2+y^2-1) \dd x-2xy\dd y]+\\
      & \frac{2\nu}{N^2}[-4x(y+1)pq+(x^2-(y+1)^2) (q^2-p^2)]\dd x +\\
        & \frac{4\nu}{N^2} [(x^2-(y+1)^2) qp
        +x(y+1)(q^2-p^2)]\dd y+\\
        & \frac{2\nu}{N}\{-[(x^2+y(y+1))q-xp]\dd p +[(y+1)p-xq] \dd
        q]\}= \\
        & \frac{1}{N}\{ \frac{k}{y}(-x^2+y^2-1)+\frac{2\nu}{N}[-4x(y+1)pq+(x^2-(y+1)^2)(q^2-p^2)]\}\dd x+\\
        & \frac{2}{N}\{  -kx+\frac{2\nu}{N}[x^2-(y+1)^2]pq+x(y+1)(q^2-p^2)\}\dd y+ \\
          & \frac{2\nu}{N}\{ [-[x^2+y(y+1)]q+xp]\dd p +[(y+1)p-xq]\dd q\}.
        \end{split}
      \end{equation}

    \section{Almost cosymplectic manifolds}\label{ACS}
    
\subsection{Definitions}
Following  \cite{paul}, an      {\it  almost cosymplectic manifold}  (ACOS) is the
triplet $(M,\theta,\Omega) $,
where $M$ is a $(2\mr{n}+1)$-dimensional manifold, $\theta\in \got{D}^1$
\begin{equation}\label{TH}
  \theta=\sum_{I=1}^{\mr{n}}(a_I\dd Q^I+b_I\dd P^I)+c\dd \kappa ,\
  a_I,~b_I, ~c\in \R, ~c\not=0, \end{equation}
$\Omega$ is a   2-form with \begin{equation}\label{COND}\text{rank}(\Omega)=2\mr{n},\end{equation}   and
\begin{equation}\label{thtOM}
  \theta\wedge\Omega^{\mr{n}}\neq 0.
\end{equation}

We recall, see e.g.
\cite[\S 7]{lutke} that if 
$A=a_{ij}\in M(n,\C)$ then  the vectorisation operator is defined as
\[
  (\mr{vec}(A)^t)^t=[a_{11},a_{12},\dots,a_{1n},a_{21},\dots,a_{2n},a_{31},\dots,a_{nn}]\in M(1,n^2,\C),
\]
while the half-vectorisation operator is 
  \[
    (\mr{vech}(A)^t)^t=[a_{11},a_{12},\dots, a_{1n},
    a_{22},\dots,a_{2n},a_{33},\dots, a_{nn}]\in M(1,N_1), ~N_1= \frac{n(n+1)}{2}.
  \]
  Note also that $\mr{vech}(A)=L_n\mr{vec}(A)$, where $L_n\in M(N_1,n^2)$ is the
  elimination matrix.

  We endow  the $n(n\!+\!3)\!+\!1$-dimensional manifold $\tilde{\mc{X}}^J_n$ with an ACOS 
structure $(\tilde{\mc{X}}^J_n,\theta,\omega) $. For $\theta$ we take
the formula \eqref{TH}, and consider  $\Omega$ written in Darboux
coordinates
\begin{equation}\label{OMM1}
  \Omega =\sum_{I=1}^{\mr{\frac{n(n+3)}{2}}}\dd Q^I\wedge\dd P^I. \end{equation}

\begin{lemma}\label{NNOOU} If in formula \eqref{NOUW1} of the \Ka-two
  form on  $\mc{D}^J_n$ we make the partial the 
  Cayley transform $(v,u)\rightarrow (v,\eta)=(x+\ii y,q+\ii p)$
  \eqref{PCT},  where $v\in MS(n,\C)$, we  get the \Ka~ two-form $\omega_{\mc{X}^J_n}$
  \begin{subequations}
    \begin{align}
  \omega_{\mc{X}^J_n}(x,y,p,q) &=\omega_1+\omega_2, \label{42a}\\
 \omega_1& =\frac{k}{4}\tr(y^{-1}\dd x\wedge y^{-1}\dd
 y)=-\frac{k}{4}\tr (\dd x \wedge\dd y^{-1}) .\label{42b}
  \end{align}
  \end{subequations}
We arrange the elements of the symmetric matrices $x, y^{-1}$ in 
$((\rm{vech}(x))^t)^t, ((\rm{vech}(y^{-1}))^t)^t$ s.t. there are $i$ elements on every 
row, $i=1$ on the last row, and $i=n$ on the first row.      We use  the notation
  \begin{equation}\label{id}
    i_d:= N_1-\frac{i(i+1)}{2},\quad  i=1,\dots, n. 
  \end{equation}
  We vectorise $\omega_1$ \eqref{42b} as \begin{equation}\label{1ome} \omega_1 =\sum_{I=1}^{N_1}\dd Q^I\wedge
  \dd P^I, \end{equation}  where 
\begin{equation}\label{OIPI}
  (Q^I, P^I)  = \left\{\begin{array}{ccc} (\frac{k}{4}
                                 x_{i_d+1,i_d+1} ,
                                 &  - {y}^{-1}_{i_d+1,i_d+1})&, I=i_d+1;\\
( \frac{k}{2} x_{i_d+1,i_d+j}, &    - {y}^{-1}_{i_d+1,i_d+j}) &,
 j=2, \dots, i,  I=i_d+2,\dots ,i_d+i .\end{array}  \right. 
\end{equation}
For $\omega_2$ in \eqref{42a} we have
\begin{equation}\label{22OM}
 \omega_2  = 2\nu\dd  Q\wedge \dd P^t =\sum_{I=N_1+1}^{N_1+n}\dd Q^i\wedge \dd P^i,~
      Q^{N_1+i}=2\nu q_i, P^{N_1+i}= p_i,~ i=1,\dots, n.\end{equation}
\end{lemma}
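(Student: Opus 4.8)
The plan is to obtain $\omega_{\mc{X}^J_n}$ from $\omega_{\mc{D}^J_n}$ by direct substitution of the partial Cayley transform \eqref{PCT}, exactly paralleling the scalar case $n=1$ treated in Proposition \ref{PRFC}b, and then to organize the resulting two-form into Darboux form by a bookkeeping argument. First I would start from \eqref{NOUW1} and, as was done for \eqref{NOUW} in Proposition \ref{PRR3}b, substitute $W=(v-\ii\un)^{-1}(v+\ii\un)$ and $z^t=2\ii(v+\ii\un)^{-1}u^t$, using that for symmetric $v$ the matrices $v-\ii\un$ and $v+\ii\un$ commute with $v$ and with each other. This turns $B(W)=M\dd W$ into a multiple of $D\dd v$, $D=(\bar v-v)^{-1}$, and yields $-\ii\omega_{\mc{X}^J_n}(v,u)=\frac{k}{2}\tr(H\wedge\bar H)+\frac{2\nu}{\ii}\tr(G^tD\wedge\bar G)$ with $H=D\dd v$ and $G^t=\dd u-p\dd v$ as in \eqref{XXV}. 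Then I apply $\mr{FC}_1$, i.e. $2\ii u=(v+\ii\un)\eta-(v-\ii\un)\bar\eta$, which is the matrix analogue of \eqref{FC1MIN}; the computation mimicking the scalar passage from \eqref{BFR} to \eqref{omSYUHP} collapses the second term to $\nu\,\tr(\dd q\wedge\dd p^t)$ up to the $r=p$ identification, giving $\omega_2=2\nu\,\tr(\dd q\wedge\dd p^t)$. The first term, written in real coordinates $v=x+\ii y$, becomes $\omega_1=\frac{k}{4}\tr(y^{-1}\dd x\wedge y^{-1}\dd y)$; the identity $\dd(y^{-1})=-y^{-1}(\dd y)y^{-1}$ rewrites this as $-\frac{k}{4}\tr(\dd x\wedge\dd y^{-1})$, establishing \eqref{42a}--\eqref{42b}.

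The remaining content is the explicit Darboux vectorization \eqref{1ome}--\eqref{OIPI}. Here I would expand $\tr(\dd x\wedge\dd y^{-1})=\sum_{a,b}\dd x_{ab}\wedge\dd(y^{-1})_{ba}$, and then use the symmetry $x=x^t$, $y^{-1}=(y^{-1})^t$ to fold the sum over the upper-triangular index set: each strictly-off-diagonal pair $(a,b)$ with $a\neq b$ appears twice, contributing a factor $2$ which accounts for the coefficient $\frac{k}{2}$ in the off-diagonal case of \eqref{OIPI} versus $\frac{k}{4}$ on the diagonal. The indexing convention stated in the lemma — arranging $\mr{vech}(x)$ and $\mr{vech}(y^{-1})$ with $i$ entries in the block whose rows are counted from the bottom, so that $i_d=N_1-\tfrac{i(i+1)}{2}$ marks the start of the $i$-th block — is then just a relabeling $I\mapsto(a,b)$ of the $N_1=\tfrac{n(n+1)}{2}$ independent entries; one checks that with $(Q^I,P^I)$ as in \eqref{OIPI} one has $\sum_{I=1}^{N_1}\dd Q^I\wedge\dd P^I=-\tfrac{k}{4}\tr(\dd x\wedge\dd y^{-1})=\omega_1$. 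For $\omega_2$, expanding $\tr(\dd q\wedge\dd p^t)=\sum_{i=1}^n\dd q_i\wedge\dd p_i$ immediately gives \eqref{22OM} with $Q^{N_1+i}=2\nu q_i$, $P^{N_1+i}=p_i$.

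The main obstacle is purely combinatorial rather than analytic: verifying that the stated block structure of the vectorization — in particular the offsets $i_d+1,\dots,i_d+i$ within the $i$-th block and the split between the diagonal entry $x_{i_d+1,i_d+1}$ (coefficient $\tfrac{k}{4}$) and the off-diagonal entries $x_{i_d+1,i_d+j}$, $j=2,\dots,i$ (coefficient $\tfrac{k}{2}$) — does reproduce every one of the $N_1$ terms of $-\tfrac{k}{4}\tr(\dd x\wedge\dd y^{-1})$ exactly once, with no term double-counted and none omitted. This is handled by a careful index bookkeeping: one shows the map $i\mapsto i_d$ is a decreasing bijection of $\{1,\dots,n\}$ onto a set of offsets partitioning $\{0,1,\dots,N_1-1\}$ into consecutive blocks of lengths $n,n-1,\dots,1$, and that within block $i$ the pairs $(i_d+1,i_d+j)$, $j=1,\dots,i$, range over a full row of the symmetric array. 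Everything else is a routine transcription of the scalar computations in Proposition \ref{PRFC} and the already-proved Proposition \ref{PRR3}b.
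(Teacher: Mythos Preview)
Your proposal is correct and follows essentially the same route as the paper. The paper also starts from \eqref{NOUW} (which is Proposition~\ref{PRR3}b), writes $H=-\tfrac{1}{2\ii}y^{-1}\dd(x+\ii y)$ to obtain $\omega_1$, and then expands $\tr(G^tD\wedge\bar G)$ with $G^t=\dd p(x+\ii y)+\dd q$ and $D=\tfrac{\ii}{2}y^{-1}$ directly, showing term by term that $\tr(\dd p\,S\wedge\dd p^t)$ and $\tr(\dd q\,y^{-1}\wedge\dd q^t)$ vanish for symmetric $S$ and that the mixed $\dd p\wedge\dd q^t$ terms combine to give $\omega_2=2\nu\,\dd q^t\wedge\dd p$; your appeal to the $\mr{FC}_1$ analogy is the same computation packaged differently, but in the matrix case you should make those symmetry cancellations explicit rather than rely on the scalar precedent. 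The Darboux bookkeeping you outline for \eqref{1ome}--\eqref{OIPI} matches the paper's.
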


 \begin{proof}
  With
  \[
    H=-\frac{1}{2\ii}y^{-1}\dd (x+\ii y).
    \]
  we get \[
  \tr [H\wedge\bar{H}]=\frac{1}{4}y^{-1}_{ik}(\dd x_{kj}+\ii \dd
  y_{kj})\wedge y^{-1}_{jl}(\dd x_{li}-\ii \dd y_{li})
  =\frac{\ii}{2}\tr (y^{-1}\dd y\wedge y^{-1}\dd x).\]
Now we calculate $\omega_2$. Using the symmetry of the matrices  $x, y$,
we  get successively \begin{subequations}
    \begin{align*}
\tr(G^tD\wedge\bar{G}) &=\frac{\ii}{2} \tr\{[ \dd p(x+\ii y)+\dd q] y^{-1}\wedge
                    [(x-\ii  y)\dd p^t+\dd q^t] \}\\
   &= \frac{\ii}{2} \tr\{\dd p(x\!+\!\ii y)y^{-1}\wedge
                    [\!(x\!-\!\ii y)\dd p^t\!+\!\dd q^t\!] \!+\!\dd qy^{-1}\wedge[(\!x-\!\ii y\!)\dd
                    p^t\!+\!\dd q^t\!]\}\\
                  & = \frac{\ii}{2} \tr\{   \dd p[
                    (xy^{-1}x+yy^{-1}y)\wedge\dd p^t + (xy^{-1}+\ii
                    )\wedge \dd q^t] \\
                  &+\dd q[ (y^{-1}x -\ii )\wedge \dd p^t +
                    \wedge y^{-1}\dd q^t]   \}\\
                  & =\frac{\ii}{2} \tr\{\dd p(xy^{-1}x+yy^{-1}y)\wedge\dd
                     p^t+\dd p xy^{-1}\wedge \dd q^t \\ 
      & +\dd qy^{-1}\wedge\dd q^t+ \ii (\dd p\wedge \dd q^t-\dd q\wedge \dd p^t)
                    \}\\
      &=  \frac{\ii}{2} \tr\{\dd p(xy^{-1}x\!+\!yy^{-1}y)\wedge\dd p^t 
         +\dd q y^{-1}\wedge \dd q^t
                  \\ &+\dd p xy^{-1}\wedge \dd
         q^t +\dd qy^{-1}x\wedge \dd p^t\!+\!2\ii \dd p\wedge \dd q^t\}\\
         &= -\dd p\wedge \dd q^t,
                    \end{align*}
                  \end{subequations}
                  and we find
                  \begin{equation}\label{prodqp}
                    \omega_2=2\nu \dd q^t\wedge \dd p.
                    \end{equation}
                  We identify $\Omega$ in formula \eqref{OMM1} with
                  $\omega$ from \eqref{42a}, where $\omega_1$
                 $ (\omega_2)$
                  is given by \eqref{1ome}, \eqref{OIPI}
                  (respective\eqref{22OM}).
 We find:
 \begin{subequations}\label{AIBI}
 \begin{align}
  a_I &=b_I=0, I=1,\dots, N_1;\\
   a_{N_1+i}& =-\frac{\sqrt{\delta}}{2\nu}p_i;~~
  b_{N_1+i}=\sqrt{\delta}q_i, i=1,\dots, n;\\
  c& =\sqrt{\delta}, ~~~\mr{n}=\frac{n(n+3)}{2}.
  \end{align}
 \end{subequations}

 Now we identify  $\lambda_6$ in formula
\eqref{BIGM}  with $\theta$ given by \eqref{TH}. We write $\lambda_6$
as
\[
  \lambda_6=\sqrt{\delta}\{[\sum_{I=1}^{N_1}-P_I\dd Q_I+ Q_I\dd P_I]+
  \sum_{i=1}^n[-P_{N_1+i}\dd Q_{N_1+i}+Q_{N_1+i}\dd P_{N_1+i}] +\dd \kappa\},
 \]
 and $\theta$ as
  \[
 \theta =\sum_{I=1}^{N_1}[a_I\dd Q_I+ b_I\dd P_I]+
 \sum_{I=N_1+1}^{\mr{n}}[ a_I\dd Q_I+ b_I\dd P_I]+c\dd \kappa.
\]
Finally, $\omega_{\mc{X}^J_n}(x,y,q,p)$ corresponds to $\Omega$
$(Q^I,P^I),I=1,\dots,N_1$ in  $\eqref{OIPI}$
respectively $i=1,\dots,n$ in \eqref{22OM}.

We have $$ \dd \omega =0, \quad \theta\wedge \omega^{\rm{n}}
=c\frac{\mr{n}}{2}\Pi_{I=1}^{\frac{\mr{n}}{2}}\dd Q^I\wedge \dd
P^I\wedge \dd \kappa, $$ i.e. 
condition \eqref{thtOM} is satisfied because of \eqref{COND}.
\end{proof}

We endow   $\tilde{\mc{X}}^J_1$  with a  {\it  generalized transitive
almost cosymplectic}   (GTACOS)  structure \cite{SB22}, i.e.  an
ACOS  structure  $(M,\theta,\Omega)$ such that
 \[\dd \Omega =0.
 \]
 Lemma \ref{L2},  extracted from \cite[Lemma  1]{SB22}, is a
 particular case of Lemma \ref{NNOOU}.  To
 proof \eqref{214b}  we introduce  the relations \eqref{ULTRAN1}  into
 \eqref{omSYUHP}.

 Alternatively, we introduce in formula \eqref{kk1} of $\mc{A}(w,z)$
 $z=\eta- w\bar{\eta}$ \eqref{E32} and we find
 \begin{equation}\label{DBA}
   \mc{A}\wedge\bar{\mc{A}}=P\dd\eta\wedge\dd\bar{\eta}.
 \end{equation}
 Introducing \eqref{PPP}, \eqref{dwv}
 and  \eqref{DBA}, we get again \eqref{214b}.
 
   \begin{lemma}\label{L2}
  If we introduce into the \Ka~two-form  $\omega_{\mc{D}^J_1} (w,z)$
  \eqref{kk1} the second partial Cayley transform \eqref{ULTRAN1}
$(w,z)\rightarrow (x,y,q,p), y>0$ we get the symplectic two-form \eqref{214b}.

In the notation of \cite{SB19}, we  introduce on the extended
5-dimensional Siegel-Jacobi half-plane  $\tilde{\got{X}}^J_1$
parametrized in $(x,y,p,q,\kappa)$   the almost
cosymplectic structure  
$(\tilde{\mc{X}}^J_1,\theta,\omega)$, where $\theta = \lambda_6$ and
$\omega$ is \eqref{214b}, i.e. 
\begin{subequations}\label{TT1}
  \begin{align}\theta & =\sqrt{\delta}(\dd \kappa
  -p \dd q+q\dd p),\quad \delta>0, \label{TT11}\\
    \omega & =\frac{k}{y^2}\dd x\wedge \dd y +2\nu \dd q\wedge \dd
             p,\quad y>0 \label{TT12}.
             \end{align}
           \end{subequations}
We have
           \begin{equation}\label{cond}
             \dd \omega=0,\quad \theta \wedge
             \omega^2=4\frac{k\nu\sqrt{\delta}}{y^2}\dd x\wedge\dd
             y\wedge\dd q\wedge\dd p\wedge \dd \kappa, 
             \end{equation}
             and   $(\tilde{\got{X}}^J_1,\theta,\omega)$ verifies the condition
             \cite[(5.5)]{SB22} of an almost cosymplectic manifold.

         With a formula of the type \eqref{KALP2},     the \Ka~two-form \eqref{TT12} on $\mc{X}^J_1$ corresponds
         to the \Ka~ potential
         \begin{equation}\label{KK33}
           f(v,\bar{v},\eta,\bar{\eta})=-2k\log \frac{v-\bar{v}}{2\ii}+\nu[\eta\bar{\eta}+g(\eta)+h(\bar{\eta})],
           \end{equation}
           in particular we get 
             \begin{equation}\label{KK1}
               f(v,\bar{v},\eta,\bar{\eta})=-2k \log \frac{v-\bar{v}}{2\ii}-\frac{\nu}{2}(\eta-\bar{\eta})^2,
             \end{equation}
             which correspond  to particular values of functions in \eqref{V1}.

If in \eqref{KK1} we make the change of coordinates \eqref{ULTRAN2} in
\eqref{KK1}, 
we get
\begin{equation}\label{KK2}
               f(w,\bar{w},\eta,\bar{\eta})=-2k \log \frac{1-w\bar{w}}{(1-w)(1-\bar{w})}-\frac{\nu}{2}(\eta-\bar{\eta})^2,
             \end{equation}
             which corespond to the particular values
             $f(w)=2k\log\frac{1-w}{w}$, $ g(\bar{w})= 2k\log(1-\bar{w})$,  $f'(\eta)={g'(\bar{\eta})}=-\frac{\nu}{2}\eta^2$
             in \eqref{V2}.
             
     If we apply to the reproducing Kernel \eqref{KK2} a formula of
     the type   \eqref{KALP}, we get again \eqref{E32b}.
     
             In Darboux coordinates we have a particular almost
             cosymplectic manifold
             $(\tilde{\got{X}}^J_1,\theta,\omega)$ verifying
             \cite[(5.5)]{SB22}
             and in  addition the  condition 
            
             \[
               \dd \omega=0. 
             \]
          
  $(\tilde{\got{X}}^J_1,\theta,\omega)$         was called   {\bf  generalized transitive
 almost  cosymplectic  manifold} \cite[GTACOS]{SB22}.
\end{lemma}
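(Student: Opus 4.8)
The plan is to establish the several assertions of the Lemma by direct computation, as already indicated in the remarks preceding its statement.

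\emph{The two-form \eqref{214b}.} I would follow the ``alternative'' route: insert the $\mathrm{FC}$-form \eqref{E32a}, $\mc{A}(w,\eta,\bar\eta)=\dd\eta-w\dd\bar\eta$, and compute
\[
\mc{A}\wedge\bar{\mc{A}}=(\dd\eta-w\dd\bar\eta)\wedge(\dd\bar\eta-\bar w\dd\eta)=(1-|w|^2)\,\dd\eta\wedge\dd\bar\eta=P\,\dd\eta\wedge\dd\bar\eta,
\]
which is \eqref{DBA}, so that the second summand of \eqref{kk1} becomes $\nu\,\dd\eta\wedge\dd\bar\eta$. For the first summand I substitute \eqref{dwv} together with \eqref{PPP} ($P=4y/N$, $N=|v+\ii|^2$), obtaining $\tfrac{2k}{P^2}\dd w\wedge\dd\bar w=\tfrac{k}{2y^2}\dd v\wedge\dd\bar v$. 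Finally $v=x+\ii y$, $\eta=q+\ii p$ give $\dd v\wedge\dd\bar v=-2\ii\,\dd x\wedge\dd y$ and $\dd\eta\wedge\dd\bar\eta=-2\ii\,\dd q\wedge\dd p$, so \eqref{kk1} becomes $-\ii\,\omega_{\mc{X}^J_1}(x,y,q,p)=-\ii\big(\tfrac{k}{y^2}\dd x\wedge\dd y+2\nu\,\dd q\wedge\dd p\big)$, which is \eqref{214b}; as a cross-check one may instead plug \eqref{ULTRAN1} straight into \eqref{omSYUHP}.

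\emph{The cosymplectic structure.} With $\theta=\lambda_6$ as in \eqref{TT11} and $\omega$ as in \eqref{214b} I note $\dd\omega=0$ at once, since $\dd(k/y^2)\wedge\dd x\wedge\dd y$ contains $\dd y\wedge\dd y$. Then $\omega^2=\omega\wedge\omega$ reduces to its single cross term, $\omega^2=\tfrac{4k\nu}{y^2}\dd x\wedge\dd y\wedge\dd q\wedge\dd p$, and wedging with $\theta$ kills the $-p\,\dd q$ and $q\,\dd p$ pieces (each repeats a factor already present), leaving $\theta\wedge\omega^2=\sqrt\delta\,\dd\kappa\wedge\omega^2=4\tfrac{k\nu\sqrt\delta}{y^2}\,\dd x\wedge\dd y\wedge\dd q\wedge\dd p\wedge\dd\kappa$ after reordering (the factor $\dd\kappa$ crosses four one-forms, sign $+1$), which is \eqref{cond}. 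Since this never vanishes on $y>0$, $\operatorname{rank}\omega=4=2\mr{n}$ with $\mr{n}=2$, and $\theta$ has the shape \eqref{TH} with $c=\sqrt\delta\neq0$, the conditions \eqref{COND}, \eqref{thtOM} hold, so $(\tilde{\mc{X}}^J_1,\theta,\omega)$ satisfies \cite[(5.5)]{SB22}; with $\dd\omega=0$ in addition it is a GTACOS manifold.

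\emph{The K\"ahler potentials.} I would check the recipe \eqref{KALP2}. For \eqref{KK33} (hence for \eqref{KK1}, the case $g(\eta)=-\tfrac12\eta^2=h(\bar\eta)$, since $\eta\bar\eta-\tfrac12\eta^2-\tfrac12\bar\eta^2=-\tfrac12(\eta-\bar\eta)^2$) the holomorphic and antiholomorphic summands drop out of $\pa\bar\pa$, and the surviving second derivatives are $h_{v\bar v}=\pa_v\pa_{\bar v}(-2k\log\tfrac{v-\bar v}{2\ii})=-2k(v-\bar v)^{-2}=\tfrac{k}{2y^2}$ and $h_{\eta\bar\eta}=\pa_\eta\pa_{\bar\eta}(\nu\eta\bar\eta)=\nu$; then \eqref{KALP2} returns $\omega=\ii\big[\tfrac{k}{2y^2}\dd v\wedge\dd\bar v+\nu\,\dd\eta\wedge\dd\bar\eta\big]=\tfrac{k}{y^2}\dd x\wedge\dd y+2\nu\,\dd q\wedge\dd p$, i.e. \eqref{TT12}. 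Substituting \eqref{ULTRAN2} into \eqref{KK1} and using \eqref{VBARV} to write $\tfrac{v-\bar v}{2\ii}=\tfrac{P}{(1-w)(1-\bar w)}=\tfrac{1-w\bar w}{(1-w)(1-\bar w)}$ turns \eqref{KK1} into \eqref{KK2}; splitting the logarithm identifies \eqref{KK2} with \eqref{V1} for $f(w)=2k\log\tfrac{1-w}{w}$, $g(\bar w)=2k\log(1-\bar w)$, $f'(\eta)=g'(\bar\eta)=-\tfrac\nu2\eta^2$. Finally, applying \eqref{KALP} to \eqref{KK2}: $\pa\bar\pa$ annihilates $2k\log(1-w)$ and $2k\log(1-\bar w)$, while $\pa_w\pa_{\bar w}(-2k\log(1-w\bar w))=\tfrac{2k}{P^2}$ and $\pa_\eta\pa_{\bar\eta}(-\tfrac\nu2(\eta-\bar\eta)^2)=\nu$, so we recover $-\ii\,\omega=\tfrac{2k}{P^2}\dd w\wedge\dd\bar w+\nu\,\dd\eta\wedge\dd\bar\eta$, i.e. \eqref{E32b}. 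The whole argument is routine exterior algebra and $\pa\bar\pa$ bookkeeping; the only places where care is needed are the factors of $\ii$ (from $\dd v\wedge\dd\bar v=-2\ii\,\dd x\wedge\dd y$ and the normalization \eqref{KALP2} versus \eqref{KALP3}, the origin of the factor-of-two discrepancies noted in Comment \ref{CM1}) and the sign when reordering the fivefold wedge in $\theta\wedge\omega^2$. There is no genuine obstacle; alternatively, since Lemma \ref{L2} is the $n=1$ instance of Lemma \ref{NNOOU}, one could simply specialize that statement and its proof.
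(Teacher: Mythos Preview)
Your proposal is correct and follows essentially the same approach as the paper: the paper obtains \eqref{214b} either by substituting \eqref{ULTRAN1} into \eqref{omSYUHP} or, alternatively, by computing $\mc{A}\wedge\bar{\mc{A}}=P\,\dd\eta\wedge\dd\bar\eta$ (your route, yielding \eqref{DBA}) together with \eqref{PPP} and \eqref{dwv}, and it notes that Lemma~\ref{L2} is the $n=1$ case of Lemma~\ref{NNOOU}. Your treatment is simply more explicit on the cosymplectic verification \eqref{cond} and on the $\pa\bar\pa$ checks for the potentials, which the paper leaves implicit.
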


\subsection{Connection matrix on  $\tilde{\mc{X}}^J_1$}\label{CMSJ}
We determined the Christofell's symbols corresponding to
 the Riemannian metric \eqref{linvG} of the extended Siegel-Jacobi
 upper half-plane $\tilde{\mc{X}}^J_1$ \cite[page 22]{SB21}. In formulas below we have  included only  the
 $\Gamma$-s which   are not  given in \eqref{GSC}
\begin{equation}\label{MNM}
\begin{array}{llll}
 \!\Gamma^{p}_{pp} \!=\! 2\tau\frac{xq}{y}\!&\!
 \Gamma^{p}_{pq} \!=\!\tau\frac{q\!-\!px}{y}
  \!&\!\Gamma^{p}_{p\kappa} \!=\!\tau\frac{x}{y} \!&\!\Gamma^{p}_{qq}
                                               \!\!=\!\!-\!2\tau\frac{p}{y} \!~\!
  \Gamma^{p}_{q\kappa}\!=\!\tau\frac{1}{y}\! \\
 \!\Gamma^{q}_{pp} \!=\!-2\tau\frac{qS}{y}
  \!&\!\Gamma^{q}_{pq}\!\!=\!\!\tau\frac{-xq\!\!+\!\!pS}{y}
  \!&\!\Gamma^{q}_{p\kappa}\!=\!-\!\tau\frac{S}{y}
 \!&\!\Gamma^{q}_{qq}\!=\! 2\tau
 \!\frac{xp}{y} \!~\!\Gamma^{q}_{q\kappa}\!=\!-\!\tau\frac{x}{y}\! \\
\!\Gamma^{\kappa}_{xp}\!=\!\frac{py^2\!-\!x\xi}{2y^2}& \!\Gamma^{\kappa}_{xq}\!=\!-\!\frac{\xi}{2y^2}\!&  \Gamma^{\kappa}_{yp}\!\!=\!\!-\!\frac{2px+q}{2y}
  \!&\!\Gamma^\kappa_{yq}\!=\!-\frac{p}{2y}~
\Gamma^{\kappa}_{pp} \!\!=\!\!-\!2\!\tau\!\frac{q}{y}(\!p\!S\!+\!q\!x\!)\! \\
  \!\Gamma^{\kappa}_{pq} \!=\! \tau\frac{p^2S-q^2}{y}& 
\!\Gamma^{\kappa}_{p\kappa} \!\!=\!-\!\tau\frac{pS+qx}{y}\!
&
 \Gamma^{\kappa}_{qq} \!\!=\!\!2\tau\frac{p\xi}{
                                                              y}\!\!&\!\Gamma^{\kappa}_{q\kappa}
                                                                      \!\!=-\!\tau\frac{\xi x}{y}.
                                              \end{array}
\end{equation}
where \[
  \tau :=\frac{\delta}{\gamma}, \quad \xi :=px+q .\]
We determine the  connection matrix on the extended Siegel-Jacobi
upper half-plane in the S-coordinates $(x,y,q,p,\kappa)$ 
\begin{subequations}\label{thetap}
  \begin{align}\nonumber
    \!\theta'_{\tilde{\mc{X}}^J_1}(x,y,q,p,\kappa)\!=&
    \left(\begin{array}{ccccc}\theta'^ x_x &\theta'^ x_y &\theta'^ x_q
            &\theta'^ x_p & \theta'^ x_{\kappa}\\
   \theta'^y_x & \theta'^y_y &\theta'^y_q& \theta'^y_p& \theta'^y_{\kappa}\\
  \theta'^q_ x&         \theta'^q_y&  \theta'^q_q  &\theta'^q_p &\theta'^q_{\kappa}  \\
  \theta'^p_x &  \theta'^p_ y&  \theta'^p_ q&  \theta'^p_p &
                                                             \theta'^p_{\kappa}
            \\
              \theta'^{\kappa}_x &  \theta'^{\kappa}_ y&  \theta'^{\kappa}_ q&  \theta'^{\kappa}_p &
                                                                                                     \theta'^{\kappa}_{\kappa}\end{array}\right)
            ~~ &= \left(\begin{array}{ccccc}\theta^ x_x &\theta^ x_y &\theta^ x_q
            &\theta^ x_p & 0\\
  \theta^y_x & \theta^y_y &\theta^y_q& \theta^y_p& 0\\
  \theta^q_ x&         \theta^q_y&  \theta'^q_q  &\theta'^q_p &\theta'^q_{\kappa}  \\
  \theta^p_x &  \theta^p_ y&  \theta'^p_ q&  \theta'^p_p &
                                                             \theta'^p_{\kappa}\\
              \theta'^{\kappa}_x &  \theta'^{\kappa}_ y&  \theta'^{\kappa}_ q&  \theta'^{\kappa}_p &
                                                             \theta'^{\kappa}_{\kappa}
 \end{array}\right). 
  \end{align}\end{subequations}
With \eqref{GSC} and \eqref{MNM}, we find for the matrix elements of
\eqref{thetap} the values
\begin{subequations}\nonumber
  \begin{align}\nonumber
\begin{array}{ll}
  \theta'^q_q &= \Gamma^q_{qx}\dd x+\Gamma^q_{qy}\dd y +\Gamma^q_{qq}\dd q+\Gamma^q_{qp}\dd
                 p+\Gamma^q_{q\kappa}\dd \kappa \\&=-\frac{x}{2y^2}\dd
                 x -\frac{1}{2y}\dd y+\frac{\tau}{y}\left( 2xp\dd q+(-xq+pS)\dd
                 p-x\dd \kappa \right)\\
  \theta'^q_p &=  \theta^q_p +\Gamma^q_{qp}\dd p+ \Gamma^q_{q\kappa}\dd
               \kappa\\ &= \frac{y^2-x^2}{2y^2}\dd x-\frac{x}{y}\dd y
                 +\frac{\tau}{y}\left( (-xq+pS)\dd q-2qS\dd p -x\dd \kappa )\right)\\
  \theta'^q_{\kappa} &= \Gamma'^q_{\kappa q}\dd q +\Gamma^q_{\kappa
                       p}\dd p=-\tau\frac{x}{y}\dd q -\tau\frac{x}{y}\dd
                       p\\
  \theta'^p_q &= \Gamma^p_{qx}\dd x+\Gamma^p_{qq}\dd q+\Gamma^p_{qp}\dd p
                +\Gamma^p_{q\kappa}\dd\kappa\\ & =\frac{x}{2y^2}\dd x
                +\frac{\dd y}{2y}+\frac{\tau}{y}\left(-2p\dd
                q +(q-px)\dd p +\dd \kappa\right)\\
  \theta'^p_p &= \Gamma^p_{px}\dd x\!+\!\Gamma^p_{py}\dd y\!+\!\Gamma^p_{pq}\dd q \!+\!\Gamma^p_{pp}\dd
                p\!+\!\Gamma^p_{p\kappa}\dd \kappa\\
               &=\frac{x}{2y^2}\dd x\!+\!\frac{x}{2y}\dd y\!+\! \frac{\tau}{y}\left(
                (q-px)\dd q+2xq\dd p+x\dd \kappa\right)\\
  \theta'^p_{\kappa } &= \Gamma^{p}_{\kappa q}\dd q +\Gamma^p_{\kappa
                        p}\dd p= \frac{\tau}{y}(\dd q+ x\dd p)\\
   \theta'^{\kappa}_{x} &= \Gamma^{\kappa}_{x q}\dd q
                          +\Gamma^{\kappa}_{x\kappa}\dd p= -
                          \frac{\xi}{2y^2}\dd
                          q+\frac{py^2-x\xi }{2y^2}\dd p\\
     \theta'^{\kappa}_{y} &= \Gamma^{\kappa}_{y q}\dd q +
                            \Gamma^{\kappa}_{yp}\dd p=-\frac{p}{2y}\dd
                            q -\frac{2px+q}{2y}\dd p\\
     \theta'^{\kappa}_{q} &= \Gamma^{\kappa}_{qx}\dd x
                            +\Gamma^{\kappa}_{qy}\dd y
                            +\Gamma^{\kappa}_{qq}\dd
                            q+\Gamma^{\kappa}_{qp}\dd
                            p+\Gamma^{\kappa}_{q\kappa}\dd
                            \kappa\\ &=-\frac{\xi}{2y^2}\dd
                            x-\frac{p}{2y}\dd y+\frac{\tau}{y}(
                            2p\xi\dd q+ (p^2S-q^2)\dd p-\xi x\dd
                                       \kappa )\\
  \theta'^{\kappa}_{p} &= \Gamma^{\kappa}_{px}\dd x
                         +\Gamma^{\kappa}_{py}\dd y
                         +\Gamma^{\kappa}_{pq}\dd q
                         +\Gamma^{\kappa}_{pp}\dd p +
                         \Gamma^{\kappa}_{p\kappa}\dd \kappa\\
  &= \frac{py^2-x\xi }{2y^2}\dd x-\frac{2px+q}{2y}\dd
    y+\frac{\tau}{y}((p^2S-q^2)\dd q-2q(pS+qx)\dd p-(pS+qx)\dd
    \kappa)\\
  \theta'^{\kappa}_{\kappa} &= \Gamma^{\kappa}_{\kappa q}\dd
                              q+\Gamma^{\kappa}_{\kappa p}\dd p=
                              -\frac{\tau}{y}(\xi x\dd q+ (pS+qx)\dd p)                         \end{array}
  \end{align}
  \end{subequations}

  \subsection{Covariant derivative of one-forms on $\mc{X}^J_1$  and
  $\tilde{\mc{X}}^J_1$}\label{CD}
The covariant derivative of a contravariant vector (one-form) is given
by
\begin{equation}\label{CV2}
  D u_i=-\theta^i_j u_j=-u_j\Gamma^i_{jk}u_{k}=-u_j\Gamma^i_{kj}u_k.
\end{equation}
The covariant derivative of $\dd z$ on $\mc{D}^J_1$ has the expression \cite[(41)]{GAB}
\begin{equation}\label{DDZ}
D (\dd z) =\left(\begin{array}{cc}\dd z \dd w\end{array}\right) 
\left(\begin{array}{cc} \lambda\bar{\eta} &
                                            \lambda\bar{\eta}^2-\frac{\bar{w}}{P}\\
        \lambda\bar{\eta}^2-\frac{\bar{w}}{P} & \lambda \bar{\eta}^3\end{array}\right)
\left(\begin{array}{c}\dd z \\\dd w\end{array}\right).
\end{equation}

The covariant derivative of $\dd w$ has the expression \cite[(42)]{GAB}
\begin{equation}\label{DDW}
- D (\dd w)  =\left(\begin{array}{cc} \dd z \dd w\end{array}\right)
\left(\begin{array}{cc} \lambda & \lambda \bar{\eta} \\
        \lambda\bar{\eta} & \lambda\bar{\eta}^2+2\frac{\bar{w}}{P}\end{array}\right)
\left(\begin{array}{c}\dd z \\\dd w\end{array}\right).
\end{equation}

We calculate the covariant derivative of the S-variables $x,y,p,q$ on $\mc{X}^J_1$
\begin{equation}\label{DX}
    D  x=\left(\begin{array}{c}\dd x\\ \dd y \\\dd q \\\dd p \end{array}\right)^t
    \left(\begin{array}{cccc}  0&\frac{1}{y} & 0 &0\\
          \frac{1}{y} &  0& 0& 0\\
             0& 0& 0& \frac{\ep}{2}y\\
                 0& 0& \frac{\ep}{2}y& \ep xy
                \end{array}\right) \left(\begin{array}{c} \dd x \\ \dd y\\ \dd q\\ \dd p\end{array}\right).
 \end{equation}
\begin{equation}\label{DY}
    D  y=\left(\begin{array}{c}\dd x \\\dd y \\\dd q \\\dd p \\\end{array}\right)^t
    \left(\begin{array}{cccc}  -\frac{1}{y}& 0 & 0 &0\\
            0& \frac{1}{y}& 0& 0\\
             0& 0& -\frac{\ep}{2}& -\frac{\ep}{2}x\\
                 0& 0& -\frac{\ep}{2}x& \frac{\ep}{2}(y^2-x^2) 
                \end{array}\right) \left(\begin{array}{c} \dd x \\ \dd y\\ \dd q\\ \dd p\end{array}\right).
  \end{equation}
\begin{equation}\label{DQ}
   D  q=\left(\begin{array}{c}\dd x \\\dd y \\\dd q \\\dd p \end{array}\right)^t
    \left(\begin{array}{cccc}  0& 0 & \frac{x}{2y^2} &\frac{x^2-y^2}{2y^2}\\
            0& \frac{x}{y} & 0& 0\\
             \frac{x}{2y^2}& 0 & 0& 0\\
                 \frac{x^2-y^2}{2y^2}& 0& 0& 0
                \end{array}\right) \left(\begin{array}{c} \dd x \\ \dd y\\ \dd q\\ \dd p\end{array}\right).
  \end{equation}
\begin{equation}\label{DP}
  D  p=\left(\begin{array}{c}\dd x \\\dd y \\\dd q \\\dd p \\\end{array}\right)^t
    \left(\begin{array}{cccc}  0& 0 & -\frac{1}{2y^2} &-\frac{x}{2y^2}\\
            0& -\frac{1}{2y} & 0& 0\\
             -\frac{1}{2y^2}& 0 & 0& 0\\
                 -\frac{x}{2y^2}& 0& 0& 0
                \end{array}\right) \left(\begin{array}{c} \dd x \\ \dd y\\ \dd q\\ \dd p\end{array}\right).
\end{equation}
We calculate the covariant derivative of the S-variables
$x,y,p,q,\kappa$ on $\tilde{\mc{X}}^J_1$
\begin{equation}\label{DDX}
    D  x=\left(\begin{array}{c}\dd x \\\dd y \\\dd q \\\dd p\\ \dd \kappa \end{array}\right)^t
    \left(\begin{array}{ccccc}  0&\frac{1}{y} & 0 &0 &0 \\
           \frac{1}{y} & 0& 0& 0& 0\\
             0& 0& 0& \frac{\ep}{2}y & 0\\
            0& 0& \frac{\ep}{2}{y}& \ep xy & 0\\
                                              0& 0& 0& 0& 0
                \end{array}\right) \left(\begin{array}{c} \dd x \\ \dd
                                           y\\ \dd q\\ \dd p\\ \dd
                                           \kappa\end{array}\right).
                                     \end{equation}
                                   \begin{equation}\label{DDY}
 D  y=\left(\begin{array}{c}\dd x\\ \dd y \\\dd q \\\dd p\\ \dd \kappa\end{array}\right)^t
    \left(\begin{array}{ccccc}  -\frac{1}{y}& 0 & 0 &0 & 0\\
            0& \frac{1}{y}& 0& 0 & 0\\
             0& 0& -\frac{\ep}{2}& -\frac{\ep}{2}x & 0\\
            0& 0& -\frac{\ep}{2}x& \frac{\ep}{2}(y^2-x^2) & 0\\
0& 0& 0& 0& 0
                \end{array}\right) \left(\begin{array}{c} \dd x \\ \dd
                                           y\\ \dd q\\ \dd p \\ \dd \kappa \end{array}\right).
  \end{equation}
\begin{equation}\label{DDQ}
  D  q=\left(\begin{array}{c}\dd x\\ \dd y \\\dd q \\\dd p\\ \dd \kappa \end{array}\right)^t
    \left(\begin{array}{ccccc}  0& 0 & \frac{x}{2y^2}
            &\frac{x^2-y^2}{2y^2}& 0 \\
            0& 0 & \frac{1}{2y}& \frac{x}{y} & 0\\
             \frac{x}{2y^2}& \frac{1}{2y}& -2\frac{\tau}{y}xp& \frac{\tau}{y}(xq-pS)& \frac{\tau}{y}x\\
            \frac{x^2-y^2}{2y^2}& \frac{x}{y}&\frac{\tau}{y}(xq-pS) & \frac{\tau}{y}2qS& \frac{\tau}{y}S\\
                                            0& 0& \frac{\tau}{y}x& \frac{\tau}{y}S & 0                \end{array}\right) \left(\begin{array}{c} \dd x \\ \dd
                                           y\\ \dd q\\ \dd p \\ \dd
                                                                                                     \kappa \end{array}\right).
                                                                                             \end{equation}
  \begin{equation}\label{DDP}
    D  p\!=\!\left(\!\begin{array}{c}\dd x\! \\\dd y\! \\\dd q\! \\\dd p\! \dd \kappa \!\end{array}\right)^t
    \left(\!\begin{array}{ccccc}
            0& 0 & -\frac{1}{2y^2}&-\frac{x}{2y^2} &0 \\
            0& 0 & 0& -\frac{1}{2y}& 0\\
             -\frac{1}{2y^2}& 0 & 2\frac{\tau}{y}p& \frac{\tau}{y}(q-px)& -\frac{\tau}{y}  \\
            -\frac{x}{2y^2}& -\frac{1}{2y}& \frac{\tau}{y}(q-px) &
            -\tau\frac{xq}{y}& -\frac{\tau}{y}x\\
            0& 0& -\frac{\tau}{y}& -\frac{\tau}{y}x& 0
                \end{array}\right)\! \left(\!\begin{array}{c} \dd x\! \\ \dd
                                           y\!\\ \dd q \!\\ \dd p \!\\ 
                                            \dd \kappa\!
                                         \end{array}\right)\!\end{equation}

\begin{equation}\label{DDK}
 D\kappa\!=\!\left(\!\begin{array}{c}\dd x\\\dd y\\\dd q\\\dd
                            p\\\dd\kappa \!
                          \end{array}\right)^t\!
    \left(\!\begin{array}{ccccc}
              \!0\!& \!0\! &\! \frac{\xi }{2y^2}\!&\!\frac{x\xi-py^2}{2y^2}\! &\!\!0\!\! \\
            \!0\!&\!\! 0\!\! & \!\frac{p}{2y}\!& \!\frac{2px+q}{2y}\!& \!\!0\!\!\\
             \!\frac{\xi}{2y^2}\!& \!\!\frac{p}{2y} \!\!& \!-2\frac{\tau}{y}p\xi\!&\!
                                                                    -\frac{\tau}{y}(p^2S\!-\!q^2)\!& \!\!\frac{\tau}{y}\xi x  \!\!\\
            \!\frac{x \xi - py^2}{2y^2}\!& \!\!\frac{2px+q}{2y}\!\!& \!\frac{\tau}{y}(q^2-p^2S)\! &
            2\frac{\tau q}{y}(pS+qx)&\!\! \frac{\tau}{y}(pS\!+\!qx)\!\!\\
           \!0\!& \!\!0\!\!&\!\frac{\tau}{y}\xi  x \!&\frac{\tau}{y}(pS\!+\!qx) & \!\!0\!\!
                                                   \end{array}\right)\! \left(\!\begin{array}{c} \!\dd x\! \\ \dd
                                           y\!\\ \!\dd q \!\\ \!\dd p \!\\ 
                                            \!\dd \kappa\!
                                         \end{array}\right)\!\end{equation}

                                   \section{Appendix }\label{APP}
\subsection{Theory}
                                   \subsubsection{Connections on real manifolds}\label{REALC}
a) {\it Connections on vector bundles.}
Following \cite[page 101]{ccl}, let $E$ be a $q$-dimensional real vector bundle with
projection $\psi : E\rightarrow M$ on the
$m$-dimensional manifold $M$ and let 
$\Gamma(E)$ be the set of smooth  sections of $E$ on $M$.

\begin{deff}\label{def1}
  {\bf  A connection}  on the vector bundle $E$ is a map
  \[
 D:\Gamma(E)\rightarrow \Gamma(T^*(M)\otimes E)
  \]
  such that 
\begin{subequations}\label{DOI}
  \begin{align}
  1. &~~  D(s_1+s_2)=Ds_1+Ds_2, \quad \forall s_1,s_2\in \Gamma (E),\\
  2. &~~  D(\alpha s)=\dd \alpha\otimes s+\alpha D s,\quad \alpha\in C^{\infty}(M).\label{51b}
  \end{align}
  \end{subequations}  The {\bf absolute differential quotient} or  the {\bf covariant derivative} of the section $s$ along $X\in\got{D}^1$ is defined
 as
\begin{equation}\label{DXS}
  D_Xs:=<X,Ds>,\end{equation}where $<,>$ is the pairing between the
tangent space $T(M)$ and  the cotangent space $T^*(M)$. \end{deff}
Chose a {\bf local field frame} of $E$ on the neighborhood $U\subset
M$, i.e. $q$-linearly independent smooth sections $s_{\alpha}$.  Then
at every point $p\in U$ 
\{$\dd u^i\otimes s_{\alpha},1\leq i  \leq m; 1\leq \alpha\leq q$\} form
a basis of $T^*_p\otimes E$ and
\begin{equation}\label{DSAL}
  Ds_{\alpha}=\sum_{\beta =1}^q\omega^{\beta}_{\alpha}\otimes s_{\beta},\quad
    \omega_{\alpha}^{\beta}=\sum_{1\leq i \leq m}\Gamma^{\beta}_{\alpha
        i}\dd u^i,
    \end{equation}
where $\omega^{\beta}_{\alpha}$ are real valued 1-forms on
$U$ and $\Gamma^{\beta}_{\alpha i}$ are smooth functions on U. Sometimes  instead of  $\omega$ it is used the symbol  $\theta$, see
 \eqref{BCON},  \eqref{ThEtA}, \eqref{nablaK}. 

If we   denote
    \begin{equation}\label{54}
      S=\left(\begin{array}{c}s_1\\\vdots\\s_q\end{array}\right),
      \quad \omega =\left(\begin{array}{ccc}\omega^1_1& \cdots& \omega^q_1\\
      \vdots &\ddots &\vdots\\ \omega^1_q&\cdots&
                                                \omega^q_q\end{array}\right),\end{equation}
    then \eqref{DSAL} can be written as
    \begin{equation}\label{DSom}
      DS=\omega \otimes S. 
    \end{equation}
    The matrix of real  one-forms $\omega$ in \eqref{54} is called the {\bf
      connection matrix} 
    or {\bf connection form} \cite{cf}.
    
  If $S'=(s'_1,\dots,s'_q)^t$ is another local  frame
  field on $U$ and 
  \begin{equation}\label{SSA}
    S'=AS,\quad
    A=\left(\begin{array}{ccc}a^1_1& \cdots& a^q_1\\
                          \vdots &\ddots &\vdots\\
                        a^1_q& \cdots&
                                       a^q_q\end{array}\right), \quad
                                   A\in M(q,\R), \end{equation}then
  \begin{equation}\label{DD1}
  DS' =\omega'\otimes S'.
\end{equation}
With \eqref{DSom}, \eqref{SSA}, we got
\[
  \dd A\otimes S+ADS=\omega'\otimes AS,
\]
and finally 
\begin{equation}\label{DD111}
    \omega'=\dd A\cdot A^{-1}+A\cdot\omega\cdot A^{-1}.
   \end{equation}
   \begin{deff}\label{def2}
     The $q\times q$ matrix of two-forms 
    \begin{equation}\label{OO}\Omega=\dd \omega-\omega\wedge\omega
    \end{equation} is called the {\bf  curvature
    matrix} of the connection $D$ on $U$  \cite[Definition 1.2 page 108]{ccl}. Sometimes the curvature
  matrix $\Omega$ is denoted $\Theta$, see \eqref{ThEtA} below.  
  \end{deff}
  In the new system of coordinate $S'$  \eqref{SSA} the curvature
  matrix $\Omega'$  is
  \begin{equation}
 \Omega'=A\cdot\Omega\cdot A^{-1}.
\end{equation}

The curvature matrix $\Omega$ defines a linear
transformation from $\Gamma(E)$ to $\Gamma(E)$. For 
\begin{equation}\label{sSUM}
\Gamma(E)\ni   s=\sum_{\alpha=1}^{q}\lambda^{\alpha}s_{\alpha}.\end{equation}
let $X,Y\in \got{D}^1$, and define
\begin{equation}
  R(X,Y)s:=     \sum_{\alpha=1}^q\lambda^{\alpha}<X\wedge Y,\Omega^{\beta}_{\alpha}>s_{\beta}|_p.    
\end{equation}
$R(X,Y)$ is called the {\bf curvature operator} of the connection $D$
and \cite[Theorem 1.3 page 109]{ccl}
\[
  R(X,Y)=D_XD_Y-D_YD_X-D_{[X,Y]}.
\]
We have \cite[page 117]{ccl}
\begin{equation}\label{511}  \Omega^j_i=\frac{1}{2}R^j_{ikl}\dd u^k\wedge \dd u^l, \quad
  R^j_{ikl}=\frac{\pa \Gamma^j_{il}}{\pa u^k} -\frac{\pa\Gamma^j_{ik}}{\pa u^l}
  +\Gamma^h_{il} \Gamma^j_{hk}-  \Gamma^h_{ik} \Gamma^j_{hl},
\end{equation}
and the $(1,3)$-tensor \[
  R=R^j_{ikl}\frac{\pa}{\pa u^j}\otimes \dd u^i\otimes \dd u^k\otimes
  \dd u^l
\]
is called the {\bf curvature tensor} of the affine connection $D$.

The section $s$ of a vector bundle $E$ is called {\bf parallel
  section} if \cite[page 116]{ccl}
\begin{equation}\label{PAR}
  Ds=0 .
\end{equation}
For \eqref{sSUM}, equation
\eqref{PAR} with \eqref{DXS}, \eqref{DOI} becomes
\begin{equation}\label{DdD}
  \dd
  \lambda^{\alpha}+\sum_{\beta=1}^q\lambda^{\beta}\omega^{\alpha}_{\beta}=0,
  \quad 1\leq\alpha\leq q.
\end{equation}
\begin{deff}\label{def3}
  Let $C$ be a  parametrized curve and $X$  a tangent
  vector field along $C$. If the section $s$ of the vector bundle $E$
  on $C$ satisfies  \begin{equation}\label{DXX}
    D_Xs=0,\end{equation}
  then $s$ is said {\bf parallel} along the curve $C$.

  If 
  \begin{equation}\label{TXX}
    X(t)=\sum_{i=1}^m\frac{\dd u^i}{\dd t}\left(\frac{\pa}{\pa
        u^i}\right)_{C(t)},\end{equation}
  and $s$ is given by \eqref{sSUM}
  then \eqref{DXX} reads 
  \begin{equation}
    \frac{\dd \lambda^{\alpha}}{\dd
      t}+\sum_{\beta, i}\Gamma^{\alpha}_{\beta i}\frac{\dd u^i}{\dd t}\lambda^{\beta}=0, \quad 1\leq\alpha \leq q.
    \end{equation}
    If any vector $v\in E_p$ is given at a point $p$ on $C$, then it
    determines uniquely  a vector field along
    $C$,  called {\bf parallel displacement of} $v$ along $C$.
  \end{deff}

   b) {\it Affine  connections.} A connection on the $m$-dimensional
  tangent vector bundle
  $T(M)$ is called {\bf affine connection} on $M$      \cite[\S~ 4-2]{ccl}.

  Formulas
  \eqref{DSAL} in the natural basis \{$s_i=\frac{\pa}{\pa u^i}, 1\leq
    i\leq m$\} in the local coordinate system $(U,u^i)$ on $M$
  became
  \begin{equation}\label{514}
    D s_i=\omega^j_i\otimes s_j=\Gamma^{j}_{ik}\dd u^k\otimes s_j,
  \end{equation}
  and the smooth functions $\Gamma^j_{ik}$  on $U$ are called {\bf coefficients} of
  the connection
  $D$ with respect to  the local coordinates $u^i$.

  In \cite[\S 4]{helg} the affine connection $D$
  is denoted $\nabla$,  $D_X$ defined at \eqref{DXS}  is denoted
  $\nabla_X$  and \eqref{DOI} becomes
\begin{subequations}\label{TREI}
  \begin{align}
 1.  & \nabla_{fX+gY}=f\nabla_X+g\nabla_Y, ~X,Y\in\got{D}^1,
            ~f,g\in C^{\infty}(M),\\
 2.  & \nabla_X(fY)=f\nabla_X(Y)+(Xf)Y, \quad Xf=<X,\dd f>.
  \end{align}
  \end{subequations}

  Equation \eqref{DXS} for $X= s_l$, $s=s_i$ becomes
  \cite[(1), page 27]{helg}
  \begin{equation}
    \nabla_{\frac{\pa}{\pa u^l}}\left(\frac{\pa}{\pa u^i}\right)
      =\Gamma^j_{il} \frac{\pa}{\pa u^j}.
    \end{equation}

  If $(W,w^i)$ is another coordinate system of $M$ and
  $s'_i=\frac{\pa}{\pa w^i}$, then on $U \cap W\not=0$  we have 
  \begin{equation}\label{520}
S'=J_{WU}\cdot S,\quad J_{WU}=\left(\begin{array}{ccc}
                                    \frac{\pa u^1}{\pa w^1}&\cdots
                                      &\frac{\pa u^m}{\pa w^1}\\
                                                           \vdots &\ddots &\vdots\\
                                   \frac{\pa u^1}{\pa w^m}& \cdots &
                                                                    \frac{\pa
                                                                    u^m}{\pa
                                                                    w^m} \end{array}\right),
                                                            \end{equation}
and   \eqref{DD1} becomes \cite[pages 113, 114]{ccl}
  \begin{subequations}\label{DD123}
  \begin{align}
    \omega' & = \dd J_{WU}\cdot J^{-1}_{WU}+J_{WU}\cdot\omega\cdot
              J^{-1}_{WU},\label{522a}\\
    \omega'^{j}_i&=\dd \left(\frac{\pa u^p}{\pa w^i}\right)\frac{\pa w^j}{\pa
                   u^p}+\frac{\pa u^p}{\pa w^i}\frac{\pa w^j}{\pa u^q}w^q_p,\label{522b}\\
                   \Gamma'^{j}_{ik}& =\Sigma_{pqr}\Gamma^q_{pr}\frac{\pa w^j}{\pa
                                     u^q}\frac{\pa u^p}{\pa
                                    w^i}\frac{\pa u^r}{\pa w^k}
                                     +\Sigma_p\frac{\pa^2u^p}{\pa w^i\pa
                                     w^k}\cdot \frac{\pa w^j}{\pa u^p},\label{522c}
  \end{align}
\end{subequations}
where $\omega^{\prime j}_i=\Gamma^{\prime j}_{ik}\dd \omega^k$.
Fomula \eqref{522c} appears also as \cite[ (2) page 27]{helg}.
$DX$  is a (1,1)-type tensor field on $M$, called {\bf the absolute differential} of $X$, \cite[page
114]{ccl}.

                               Let   $T^r_s$ be the tensor product of
                               the tangent and cotangent bundles. If
                               $t$  is an $(r,s)$-type tensor field,
                               then the image of $t$ under the
                               induced connection $D$ is an
                               $(r,s+1)$-type tensor field $Dt$.
                               \begin{deff}\label{def4}
                                 Let $C: u^i=u^i(t)$ be a
                                 parametrized curve on $M$ and
                                 \begin{equation}
                                   X(t)=x^i(t)\left(\frac{\pa}{\pa
                                       u^i}\right)_{C(t)}.
                                 \end{equation}
                                 $X(t)$ is {\bf parallel} along $C$
                                 if \begin{equation}\label{DXT}
                                   \frac{DX}{\dd t}=0.
                                   \end{equation}
                                   If the tangent vectors of a curve
                                   $C$ are parallel along $C$, then we
                                   call $C$ a {\bf self-parallel
                                     curve}, or a {\bf geodesic}.
                                   \eqref{DXT} is equivalent with the
                                   system of first order differential
                                   equations 
                                   \begin{equation}
                                     \frac{\dd x^i}{\dd
                                       t}+x^j\Gamma^i_{jk}\frac{\dd
                                     u^k}{\dd t}=0, \quad i=1,\dots, m.
                                 \end{equation}
  The tangent vector $X$ at any point of $C$ give rise to a parallel
  vector field, called the {\bf parallel displacement} of $X$ along
  the curve $C$.     With \eqref{TXX}, a geodesic curve $C$ should
  satisfy the system of second-order differential equations                           
  \begin{equation}\label{GEO}
    \frac{\dd^2u^i}{\dd t^2}+\Gamma^i_{jk}\frac{\dd u^j}{\dd
      t}\frac{\dd u^k}{\dd t}=0, \quad i=1,\dots,m.
    \end{equation}
\end{deff}
 c) {\it Riemannian connections.}
Following  \cite[Chapter 5]{ccl}, let us suppose that  $M$ is an $m$-dimensional
smooth manifold and let $G$ be  a symmetric covariant tensor of rank two on $M$. In a
local coordinate system $(U,u^i)$
\[
  G=g_{ij}\dd u^i\otimes \dd u^j,  \quad g_{ij}= g_{ji}.
\]
We have also
\[
  G(X,Y)=g_{ij}(p)X^iY^i, \quad X,Y\in T_p(M).
  \]
If the local coordinate system $u^i$ is changed to     $(u')^i$, then
$g_{ij}$ became 
\begin{equation}\label{schg}
  g'_{ij}=\frac{\pa u^k}{\pa {u'}^i}g_{kl}\frac{\pa u^l}{\pa {u'}^j}.
  \end{equation}

If $G$ is a smooth, everywhere nondegenerate symmetric tensor field of
rank 2, $M$ is called {\bf generalized Riemannian manifold}. If $G$ is
positive definite, then $M$ is called a {\bf Riemannian manifold}.  
\begin{deff}\label{def5}
  Suppose $(M,G)$ is an $m$-dimensional generalized
  Riemannian manifold and $D$ is an affine connection on $M$. If
  \begin{equation}\label{DG}
    DG=0,
  \end{equation}
  then $D$ is called a { \bf  metric-compatible connection} on
  $(M,G)$. 
\end{deff}
Condition \eqref{DG} is equivalent with
\begin{equation}\label{DDG}
  \dd g_{ij}=\omega^k_ig_{kj}+\omega^k_jg_{ik}, \quad \text{or~~~}\dd
  G=\omega\cdot G+G\cdot \omega^t .
\end{equation}
The geometric meaning of metric-compatible connections is that parallel
translations preserve the metric \cite[page 127]{ccl}.

Let \begin{equation}\label{TR}
  T^{j}_{ik}:=\Gamma^j_{ki}-\Gamma^j_{ik}.
  \end{equation}
  Then \begin{equation}
    T=T^{j}_{ik}\frac{\pa}{\pa u^j}\otimes\dd u^i\otimes \dd
      u^k\end{equation}
    is a $(1,2)$-type tensor, called the {\bf torsion tensor} of the
    affine connection $D$ and
    \[
      T(X,Y)=D_XY-D_YX-[X,Y]. \]
    If the torsion tensor of the affine
      connection $D$ is zero, then the connection is said to be  a {\bf
        torsion-free} connection.
      
   According to \cite[page 138]{ccl}   
\begin{Theorem} {\bf{(Fundamental Theorem of Riemannian Geometry)}} If $M$ is an $m$-dimensional generalized Riemannian manifold, then
  there exists an  unique torsion-free and metric compatible connection
  on $M$, called the {\bf Levi-Civita connection} on $M$, or the
  {\bf Riemannian connection} of $M$. 
\end{Theorem}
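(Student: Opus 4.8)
The plan is to run the classical two-step argument: first force the connection to be unique by deriving an explicit expression for it, then use that expression to construct it and verify the defining axioms \eqref{TREI}, \eqref{DDG} and the torsion-free condition $T=0$ with $T$ as in \eqref{TR}.

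For uniqueness I would argue invariantly. Rewrite metric compatibility \eqref{DDG} in the equivalent form $X\,G(Y,Z)=G(D_XY,Z)+G(Y,D_XZ)$ for all $X,Y,Z\in\got{D}^1$, and torsion-freeness as $D_XY-D_YX=[X,Y]$. Writing the first identity for the three cyclic triples $(X,Y,Z)$, $(Y,Z,X)$, $(Z,X,Y)$, adding the first two and subtracting the third, and using the torsion-free relation to eliminate the symmetrized covariant derivatives, one obtains the Koszul identity
\[ 2\,G(D_XY,Z)=X\,G(Y,Z)+Y\,G(Z,X)-Z\,G(X,Y)+G([X,Y],Z)-G([X,Z],Y)-G([Y,Z],X). \]
Since $G$ is everywhere nondegenerate, the vector field $D_XY$ is completely determined by the values of the right-hand side as $Z$ ranges over $\got{D}^1$; hence at most one torsion-free metric-compatible connection can exist.

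For existence I would turn this around and \emph{define} $D_XY$ by declaring $2\,G(D_XY,Z)$ to be the right-hand side above. A short check shows that this right-hand side is $C^\infty(M)$-linear and tensorial in $Z$ — the stray $(Xf)$- and $(Yf)$-terms produced by the rescaling $Z\mapsto fZ$ cancel by symmetry of $G$ — so nondegeneracy of $G$ yields a well-defined smooth vector field $D_XY$, and $D$ is a bona fide map $\got{D}^1\times\got{D}^1\to\got{D}^1$. It then remains to verify: (i) additivity and $C^\infty(M)$-linearity in $X$, again by tracking the rescaling terms; (ii) the Leibniz rule $D_X(fY)=f\,D_XY+(Xf)Y$ of \eqref{TREI}, where the term $(Xf)\,G(Y,Z)$ survives exactly once; (iii) metric compatibility, by summing the defining formula for $G(D_XY,Z)$ with that for $G(Y,D_XZ)$ and watching the bracket terms telescope to $X\,G(Y,Z)$; and (iv) vanishing torsion, by subtracting the formula for $G(D_YX,Z)$ from that for $G(D_XY,Z)$ and recognizing the difference as $G([X,Y],Z)$. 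Each step is a direct manipulation of the Koszul identity.

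An equivalent, more computational route — closer to the coordinate formalism used throughout this paper — is to solve the local linear system directly: in a chart $(U,u^i)$ the conditions $\dd g_{ij}=\omega^k_ig_{kj}+\omega^k_jg_{ik}$ from \eqref{DDG} together with $\Gamma^j_{ik}=\Gamma^j_{ki}$ force the Christoffel symbols to be $\Gamma^l_{ij}=\tfrac12 g^{lk}\bigl(\pa_j g_{ik}+\pa_i g_{jk}-\pa_k g_{ij}\bigr)$, from which existence and uniqueness on each chart are immediate. The one genuine subtlety, and the step I expect to be the main obstacle, is the patching: one must check that these chart-wise symbols transform according to \eqref{522c}, so that the locally defined covariant derivatives agree on overlaps and glue to a global affine connection. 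This is a chain-rule computation using the transformation law \eqref{schg} of $g_{ij}$, in which the non-tensorial second-derivative term of \eqref{522c} arises precisely from differentiating the Jacobian. Throughout, only nondegeneracy of $G$ is used — never positive definiteness — so the theorem holds verbatim for generalized Riemannian manifolds.
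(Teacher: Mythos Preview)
Your proof is correct and complete. However, the paper does not actually prove this theorem: it is stated as a quotation from \cite[page 138]{ccl}, and what follows is only the record of the resulting Christoffel symbols \eqref{geot}, not a derivation. Your ``computational route'' lands exactly on those formulas, so in that sense you recover what the paper records; your Koszul-identity argument and the discussion of the gluing via \eqref{522c} go well beyond anything the paper supplies.
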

Let us denote \cite[page 138]{ccl}:
\[\Gamma_{ijk} :=g_{lj}\Gamma^l_{ik},\quad w_{ik}:=g_{lk}w^l_i.
\]
Also
\[\frac{\pa g_{ij}}{\pa u^k}=\Gamma_{ijk}+\Gamma_{jik}, 
\]or
\[
  g_{ij,k}:=\pa_kg_{ij}-\Gamma^l_{ki}g_{lj} -\Gamma^l_{kj}g_{il} =0,
\]
and
\begin{equation}\label{geot}
\Gamma_{ikj}\!=\!\frac{1}{2}\left( \frac{\pa g_{ik}}{\pa u^j} \!+ \!\frac{\pa g_{jk}}{\pa
    u_i}- \frac{\pa g_{ij}}{\pa u^k}\right), ~\Gamma^k_{ij}=\frac{1}{2}g^{kl}\left(\frac{\pa g_{il}}{\pa u^j}\!+\!\frac{\pa g_{jl}}{\pa
    u^l}\!-\!\frac{\pa g_{ij}}{\pa u^l}\right), ~ g_{ij}g^{jk}=\delta^k_i.
\end{equation}
$\Gamma_{ijk}$ ($\Gamma^i_{jk}$) is called Christofell's symbol of
first (respectively second) kind.
Also  $\Omega G$ is an antisymmetric tensor \cite[(98)]{CH67}:
 \begin{equation}\label{AS}
  \Omega G+G\Omega^t =0.
  \end{equation}
  
  Let us introduce \cite[pages 14, 142]{ccl}
  \[
    \Omega_{ij}:=\Omega^k_ig_{kj},
  \]
  and the skew symmetric tensor $ \Omega_{ij} $ is  given by
  \[
    \Omega_{ij}=\dd \omega_{ij}+\omega^l_i\wedge \omega_{jl}.
  \]
  If
  \[
R_{ijkl}:=\frac{1}{2}R^h_{ikl}g_{hj},
\]
then \[
  \Omega_{ij}=\frac{1}{2}R_{ijkl}\dd u^k\wedge \dd u^l,\quad
  R_{ijkl}=\frac{\pa \Gamma_{ijkl}}{\pa u^k}- \frac{\pa \Gamma_{ijk}}{\pa u^l}
  +\Gamma^h_{ik}\Gamma_{jhl} -\Gamma^h_{il}\Gamma_{jhk}.
\]
The covariant tensor of rank 4 $R_{ijkl}$ is called the
{\bf curvature tensor} of the generalized Riemannian manifold $M$ and
has the properties \cite[page 142]{ccl}:\\
1. $R_{ijkl}=-R_{jikl} =-R_{ijlk},$\\
2. $R_{ijkl}+R_{iklj} +R_{iljk}=0,$\\
3. $R_{ijkl}=R_{klij}.$\\
\subsubsection{Connections on complex manifolds}\label{CCM}
 Several definitions introduced in
 \S\ref{REALC}  for real connections  are adapted  to the  complex
 case  \cite{CH67}.
 Essentially, the transpose of $A^t, ~A\in M(q,\R)$ is replaced by  the
 conjugate transpose (or hermitian transpose)   $A^H$,  $ A\in
 M(q,\C)$, also denoted $A^*$, $A^{\dagger}$, $A^+$.

 In Definition \ref{def1}  $E$ is  taken a  {\bf complex fibre bundle of complex
dimension} $q$ \cite[\S 5]{CH67}, i.e. the fibres of $E$ are $\C^q$
and the structural group is $GL(q,\C)$. Then in  formula \eqref{DSAL}  the 1-forms
$\omega^{\beta}_{\alpha}$ are {\bf complex valued} and the parallel sections
defined in \eqref{PAR} are named {\bf horizontal lifts}.

Using the decomposition $T^*_M=(T_M^*)'+(T_M^*)"$, $D=D'+D"$, where
$D':\got{A}^0(E)\rightarrow \got{A}^{1,0}$ and
$D":\got{A}^0(E)\rightarrow \got{A}^{0,1}$. The connection $D$ is called
{\bf compatible with the complex structure} if  $D"=\bar{\pa}$
\cite[page 73]{GH}. 

Instead of
 Riemannian metric  in the real case,  a hermitian structure on
the  complex bundle $E$ is  introduced. A {\bf hermitian structure} on a complex vector
space $V$ is a complex-valued function $H(\xi,\eta)$, $\xi,\eta\in V$
such that \cite[page 9]{CH67}
\begin{subequations}\label{patru}
  \begin{align}
    1. &~~H(\lambda_1\xi_1+\lambda_2\xi_2,\eta)= \lambda_1H(\xi_1,\eta)+\lambda_2H(\xi_2,\eta),
         \quad \lambda_1,\lambda_2\in \C,~~\xi_1,\xi_2,\eta\in V;\\
  2. &~~  \overline{H(\xi,\eta)}=H(\eta,\xi),\quad \xi,\eta\in V.
  \end{align}
\end{subequations}
$H$  is called {\bf positive definite} if
\[
  H(\xi,\xi)>0, \quad \xi\not=0.
\]
A {\bf hermitian structure} on a complex bundle $\psi:~ E\rightarrow M$ is a $C^{\infty}$
field of positive definite hermitian structure in the fibers of $E$. A
complex vector bundle $\psi: E\rightarrow M$ with a hermitian structure is called {\bf
  hermitian vector bundle}. For 
every frame field $s$ the hermitian structures defines an hermitian
matrix
\[
  H_s=(H(s_i,s_j)) =H_s^H=\bar{H}_s^t,\quad 1\leq i,j\leq q.
\]
Under of change of coordinates \eqref{SSA}, we
have instead of \eqref{schg}
\[
  H_{s'}=A H_s\bar{A}^t, \quad A\in M(q,\C).
  \]
If condition \eqref{PAR} is fulfilled for the hermitian vector bundle,
 $s$ is called {\bf horizontal} and condition \eqref{DdD}
is also  fulfilled. The mapping $C\rightarrow \psi^{-1}(C)$ 
which assign to a point $t\in C$ $s=\lambda^{\alpha}(t)s_{\alpha}$ is
called {\bf horizontal lifting} if \eqref{DdD} is satisfied (parallel
vector field in Definition \ref{def3}). Equation \eqref{DD1} are
satisfied for $A\in M(q,\C)$  in \eqref{SSA}.

Instead of metric compatible connection on generalized Riemannian
manifolds, for hermitian vector bundle the connection is called {\bf admissible} if
$H(\xi,\eta)$
remains constant when $\xi,\eta$ are horizontal sections along
arbitrary curves. Instead of \eqref{DDG}, we have
\begin{equation}\label{DDG1}
  \dd G=\omega\cdot H+H\cdot {\omega}^H , 
\end{equation}
where
\[
  H(\xi,\eta)=\sum_{i,k=1}^nh_{ik}\xi^i\bar{\eta}^k,\quad \xi=\xi^is_i,~ \eta=\eta^is_i,~~
  h_{ik}=H(s_i,s_k). 
\]

The curvature matrix $\Omega$ is introduced as in Definition~\ref{OO}
and $\Omega H$ is skew-hermitian, while in the real case is
skew-symmetric as in \eqref{AS}. A frame field $s$ of a hermitian
vector bundle is called {\bf unitary} if $H_s=\mathbb{1}_q$ and the connection
and curvature matrix are both skew-hermitian.

Now let $M$ be a $m$-dimensional complex manifold and let 
$\psi:~ E\rightarrow M$ be a complex vector bundle over $M$ with fiber
dimension $q$.  If the transition functions $E$ are holomorphic, then
$E$ is a {\bf  holomorphic bundle}.
If $q=1$ we have a {\bf line
  bundle}.

Let $E\rightarrow M$ be a holomorphic vector bundle on the complex
manifold $M$ with hermitian  metric $h$ defined by the holomorphic
reper $f$. The dual bundle $E^*\rightarrow M$ has the metric
\[
  h_{E^*}(f^*)=h^{-1}_E(f) \] in the dual reper $f^*$, and
\[
  \theta_{E^*}=-\theta_E, \quad \Theta_{E^*}=-\Theta_E.
  \]
Above $\theta_E~ (\Theta_E) $ are the connection matrix (respectively
curvature matrix, denoted in \eqref{OO} with  $\Omega$))  of the holomorphic vector bundle $E$ and
\begin{equation}\label{ThEtA}
  \theta_E(f)=\pa \log H_E(f),\quad
  \Theta_E(f)=\bar{\pa}\theta(f)=\bar{\pa}\pa \log H_E(f).
  \end{equation}
\eqref{ThEtA} in the case of the projective space give the hermitian
metric
\[
  h_{[-1]}(f,f)=(f,f),
\]
and we find for the tautological line  bundle $[-1]$  on $\db{CP}^n$
\[
  \Theta _{[-1]}(f)=-\frac{(f,f)(\dd f,\wedge \dd f)-(\dd f,f)\wedge
    (f,\dd f)}{(f,f)^2}, 
\]
We have also the relations 
  \[
    H_{[-1]}=1+||w||^2, \quad
    H_{[1]}=(1+||w||^2)^{-1}.
    \]

    The curvature matrix of the hyperplane line  bundle on
    $\db{CP}^n$ is
\[
\Theta_{[1]}=\frac{(1+||w||^2)\sum \dd w_k\wedge \dd \bar{w}_k-
      \sum\bar{w}_k \dd w_k\wedge \sum w_k\dd \bar{w}_k}{(1+||w||^2)^2}.
\]

Let $(z^1_U\dots,z^m_U)$ (respectively $(z^1_V\dots,z^m_V)$) be local
coordinates in $U$ (resp. in $V$). The tangent bundle has as
transition functions
the Jacobian matrices similar with \eqref{520}
\begin{equation}
  J_{UV}=\frac{\pa (z^1_U,\dots,z^m_U)}{\pa (z^1_V,\dots,z^m_V)}.
\end{equation}

A section $s\in E$ is {\bf holomorphic} if its components relative to
a chart are holomorphic. A connection such that the connection matrix
is a matrix of 1-forms of type (1,0) relative to  holomorphic frame
field is called  a {\bf connection of type} (1,0). Formulae similar
with \eqref{520}, \eqref{SSA}, \eqref{522b} hold for 1-forms of type (1,0).

We extract from \cite[pages 5, 6]{SB21} Remark \ref{RM10} and some considerations.

In the convention  $\alpha, \beta, \gamma,\dots$
run from 1 to $n$, while A,B,C,$\dots$  run through $1,\dots,n, $  
 $\bar{1},\dots,\bar{n}$, \cite[p 155]{kn}, for an almost complex
 connection without torsion we have
the relations 
$$\Gamma^{\alpha}_{\beta\gamma}=\Gamma^{\alpha}_{\gamma \beta}; \quad
\bar{\Gamma}^{\alpha}_{\beta\gamma}=\Gamma^{\bar{\alpha}}_{\bar{\beta}\bar{\gamma}}$$
and all other $\Gamma^A_{BC}$ are zero. For a complex manifold of 
complex dimension  $n$ there are  $\frac{n^2(n+1)}{2}$     distinct $\Gamma$-s.

If we take into account the hermiticity condition \eqref{EQK}  in \eqref{KALP2}  of
the metric   and the  K\"ahlerian
restrictions \eqref{condH}, 
the non-zero Christoffel's  symbols  $\Gamma$  of the Chern connection (cf. e.g.  \cite[\S 3.2]{ball}, also Levi-Civita connection,  cf. e.g. 
 \cite[Theorem 4.17]{ball})  which appear in \eqref{geot} are determined by the equations,
see also  e.g.   \cite[(12) at p 156]{kn}
\begin{equation}\label{CRISTU}  h_{\alpha\bar{\epsilon}}\Gamma^{\alpha}_{\beta\gamma}=
\frac{\pa  h_{\bar{\epsilon}\beta}}{\pa z_{\gamma}} = \frac{\pa
  h_{\beta\bar{\epsilon}}}{\pa z_{\gamma}}, ~~\alpha, \beta, \gamma,
\epsilon =1,\dots,n, 
 m\end{equation}
and \[
\Gamma^{\gamma}_{\alpha\beta}=\bar{h}^{\gamma\bar{\epsilon}}\frac{\pa
  h_{\beta\bar{\epsilon}}}{\pa z_{\alpha}}=
h^{\epsilon\bar{\gamma}}\frac{\pa h_{\beta\bar{\epsilon}}}{\pa  z_{\alpha}},\quad
\text{where~ ~ ~ }
h_{\alpha\bar{\epsilon}}h^{\epsilon \bar{\beta}}=\delta_{\alpha\beta}.
\]

If a hermitian structure  $H$ is defined on the  holomorphic vector
bundle $\psi:~E\rightarrow M$, then it has an  uniquely
defined admissible connection of type (1,0)
given by
\begin{equation}\label{THeT}
  \omega =\pa H\cdot H^{-1}.
\end{equation}
If $q=1$, then  $H=(h), \Omega =(\Omega), h>0$ and 
\begin{equation}\label{PHO}
  \Omega=-\pa\bar{\pa}\log h. 
\end{equation}
$\Omega$ is closed and globally defined.

Chern \cite[page 45]{CH67} calles
\begin{equation}\label{1CH}
    \frac{1}{2\pi \ii}\Omega\end{equation}  the {\bf curvature form of the
  connection}. The holomorphic line  bundle $E\rightarrow M$ is said
to be {\bf positive} if $E$ can be given a metric
$h\in\C^{\infty}(M,E^*\times\bar{E}^*)$ such the first Chern class
$c_1(E)$  is positive.

\begin{Remark}\label{RM10}Let $M$ be a \Ka~ manifold with local complex  coordinates
   $(z^1,\dots,z^n)$. Let
   $\Gamma^i_{jk}(z)$ be the holomorphic Christofell's symbols in the
   formula  of
   geodesics 
\begin{equation}\label{geoD1} 
\frac{\dd ^2 z^i}{\dd t^2}+\Gamma^i_{jk}\frac{\dd z^j}{\dd t}\frac{\dd
z^k}{\dd t} =0. \quad i=1,\dots,n.
\end{equation}
Let us make in formula
   \eqref{geoD1} the change of variables $z^j=\xi^j+\ii \eta^j$,
   $\xi^i,~\eta^i\in \R$ 
  and let us introduce the notation $\xi^{j'}:=\eta^j$, $j':=j+n$,
  $j=1,\dots,n$.

  Then the geodesic equations \eqref{geoD1} in
  $(z^1,\dots,z^n)\in\C^n$ became geodesic equations in the
  variables $(\xi^1,\dots,\xi^n,\xi^{1'},\dots,\xi^{n'})\in\R^{2n}$

\begin{gather}\label{GR}
 \frac{\dd^2\xi^i}{\dd t^2}+\GM^i_{jk}\frac{\dd\xi^j}{\dd
  t}\frac{\dd\xi^k}{\dd t}+2\GM^i_{jk'}\frac{\dd\xi^j}{\dd
  t}\frac{\dd\xi^{k'}}{\dd t}+\GM^i_{j'k'}\frac{\dd\xi^{j'}}{\dd
  t}\frac{\dd\xi^{k'}}{\dd t} =0,\\
 \frac{\dd^2\xi^{i'}}{\dd t^2}+\GM^{i'}_{jk}\frac{\dd\xi^j}{\dd
  t}\frac{\dd\xi^k}{\dd t}+2\GM^{i'}_{j'k}\frac{\dd\xi^{j'}}{\dd
  t}\frac{\dd\xi^{k}}{\dd t}+\GM^{i'}_{j'k'}\frac{\dd\xi^{j'}}{\dd
  t}\frac{\dd\xi^{k'}}{\dd t} =0,
\end{gather}
where 
\begin{equation}\label{GAMELE}
\GM^{i}_{jk}=\GM^{i'}_{j'k}=-\GM^{i}_{j'k'}=\Re{\Gamma^i_{jk}};
~-\GM^{i}_{jk'}=\GM^{i'}_{jk}=-\GM^{i'}_{j'k'}=\Im{\Gamma^i_{jk}}
\end{equation}
and the real and imaginary parts of $\Gamma^i_{jk}$ are functions of
$(\xi,\xi')\in\R^{2n}$.

We find for the Berry phase \eqref{BCON}  the expression 
\begin{equation}\label{BFF}\varphi_B=  \sum_{i,j}(\varphi_B)^j_i\!\!=\!-\!\!\sum_{ij}\frac{\ii}{2}(\theta^j_i\!\!-\!\!\bar{\theta}^j_i)\!\!=\!\!\sum_{ij}\Im
  \Gamma^j_{ik}\dd\! \xi^k\!\!+\!\!\Re \Gamma^j_{ik}\dd\! \eta^k
 \! \!=\!\!\sum_{ij}\tilde{\Gamma}^{j'}_{ik}\dd\! \xi^k\!\!+\!\!\tilde{\Gamma}^j_{ik}\dd\! \eta^k.
  \end{equation}
\end{Remark}
\begin{proof}
  The first part is extracted from \cite[Remark 1]{SB21}. \eqref{BFF}
  is proved with \eqref{514}, \eqref{THeT}, \eqref{DSAL}.
  \end{proof}

If $M$ is a complex $m$-dimensional manifold, $M$ is called {\bf 
  hermitian} if a hermitian structure $H$ is given in its tangent
bundle $T(M)$. Then in a {\bf natural}  frame
field in local coordinates $z^1,\dots,z^m$
\[s_i=\frac{\pa}{\pa z^i},\quad
   h_{ik}=H(\frac{\pa}{\pa z^i},\frac{\pa}{\pa z^k}),\quad
   H={H}^H=(h_{ik}), \]
 and $H$ is positive definite hermitian.
 The \Ka~ form
 \[
  \hat{H}=\frac{\ii}{2}\sum h_{ik}\dd z^i\wedge\dd \bar{z}^k
 \]
 is a real-valued form of type (1,1) and the hermitian manifold $M$ is
 {\bf \Ka}~ if
 \[\dd \hat{H}=0.\]

 To a \Ka~manifold $(M,\omega_M)$  it is attached the triple
 $({L},h,\nabla_L)$ \cite{Cah,SBS},
  where  ${L}$ is a holomorphic (prequantum) line bundle on $M$,
  $h$ is a hermitian metric on ${L}$ (taken conjugate linear in
  the first argument), and
  $\nabla_L$ is a connection compatible with the metric and the \Ka~
  metric, \begin{equation}\label{nablaK}
    \nabla _L= \pa+ \theta_L + \bar{\pa}, \quad \theta_L= \pa \log \hat{h},\end{equation} where
  $\hat{h}$ is local representative of the hermitian metric $h$;  see
  also \cite[Proposition 3.21]{ball}, where the connection \eqref{nablaK}
  is called {\bf Chern connection}       \cite[page 31]{ball}.   With respect to local
  holomorphic coordinates of the manifold and with respect to a local
  holomorphic
  frame for the bundle the metric $h$ can be given as \cite[\S 2]{SB15a}
  \[
    h(s_1,s_2)(z)=\bar{\hat{h}}(z)\bar{\hat{s}}_1(z)\hat{s}_2(z),\quad \hat{h}(z)=(e_z,e_z)^{-1}.
    \]
  where $\hat{s}_i$ is a local representing function for the
  section $s)i$, $i=1,2$, and $\hat{h}$ is a locally defined   real-valued
  function on $M$.

     A  \Ka~ manifold $(M,\omega_M)$ is {\bf
   quantizable} \cite{Cah}  if in  local coordinates the curvature of
 $L$ \eqref{PHO} and   $\omega_M$ are related by the relation
 \begin{equation}\label{QUANT1}
   \Omega_L =-\ii \omega_M, \quad \text{or}\quad\Omega_L=-\pa\bar{\pa}\log {h}.
 \end{equation}
 Then $\omega_M$ is integral, i.e.  $ \omega_M\in H^2(M,\Z)$ and the first Chen class
 $c_1[{L}]=[\omega_M]$ \cite[page 141]{GH}. $M$ is called a
 {\bf Hodge} manifold for compact   \Ka~ manifolds.     Then $[]$
   \[
[]: \operatorname{Div}(M)\rightarrow (M,\mc{O}^*) 
  \]
  is a functorial homomorphism
  between the group of divisors and the Picard group of equivalence
  classes of $C^{\infty}$ holomorphic line bundles \cite[page 133]{GH}.

  Details for quantizable noncompact manifolds  can be find in
  \cite[\S 2, \S 5]{SB15a}, \cite[\S 2.1]{SB15}

 Since $\Omega H$ is of type (1,1), we get
 \[
   \Omega H=(\Omega_{ik}),\quad \Omega_{ik}=\sum_{j
     l}R_{iikj\bar{l}}\sigma^j\wedge\bar{\sigma}^l, 
 \]
 where $\sigma$ is a base dual with  holomorphic  base $s$.

 Note that in \cite{SB2000}, instead of quantization condition
 \eqref{QUANT1}, we used the condition 
 \begin{equation}\label{QUAN2}
   \Omega_L =-2\ii \omega_M.
 \end{equation}

 The {\bf holomorphic sectional curvature} at $(x,\xi)\in (U,T(M))$ is
 \[
   R(x,\xi)=2\sum
 R_{ijkl}\xi^i\bar{\xi}^k\xi^j\bar{\xi}^l/(\sum
 h_{ik}\xi^i\bar{\xi}^k)^2,
\]
The (1,1) type form 
\[\Phi:=\operatorname{Tr} \Omega.
\]
is called {\bf  Ricci form}.

\subsubsection{Holonomy}\label{HOLL}
Following \cite{BK}, 
let $\pi: L\rightarrow M$  be a line bundle over $M$ and let
$\got{L}=\got{L}(M) $ be  the set  of equivalence classes of line
bundles  over $M$.
$\got{L}$ has a group structure and this group is naturally isomorphic
with $H^2(M,\Z)$.
Then there exists  locally an unique $\alpha\in \got{D}^1$, the 
{\bf connection form}, such that  \cite[(1.4.3)]{BK}
\begin{equation}\label{NAB}
  \nabla_{\xi}s=2\pi \ii<\alpha, \xi>s.
\end{equation}
Comparison of \eqref{DXS}, \eqref{DSom} with \eqref{NAB}  gives the
correspondence $2\pi\ii \alpha\rightarrow \omega$  in the notation of
connection form in \cite{BK}  of the and  respectively  connection
matrix in \cite{ccl}.

The construction in \eqref{NAB} can be globalized \cite[Proposition 1.5.1]{BK}.
There exists an unique $\Omega\in \got{D}^2$ closed such that locally
\cite[Proposition 1.6.1]{BK}
\[
  \dd \alpha |_U=\Omega|_U.
  \]
The closed 2-form $\Omega$ is  called the  {\bf curvature} of
$(L,\alpha)$ \cite[page 104]{BK},
$\Omega= \operatorname {curv} (L,\alpha)$.

Let $\gamma: [a,b]\rightarrow M$ be a pice-wise smooth  curve and there is
a    linear isomorphism
\[ P_{\gamma}: L_{\gamma(a)}\rightarrow L_{\gamma(b)}\]
called {\bf parallel transport along} $\gamma$. Then the function
$Q:\Gamma\rightarrow \C\setminus \{0\}$
called {\bf scalar parallel transport function}, where
$\Gamma=\Gamma(M)$ is the set  of all piece-wise closed
curves on $M$
\[
  P_{\gamma}(s)=Q(\gamma)s, \quad ~~\forall s\in L_p'=L_p\setminus\{0\}.
  \]
   $Q(\gamma)=\exp(\ii \beta)$       is calculated with {\bf Stokes' formula} \cite{SB2000}
   \begin{equation}\label{ST}
     \beta=\oint_{\gamma}A_L=\oint_{\gamma}\ii
     \theta_L=\int_{\sigma}\dd A_L, 
   \end{equation}
where $\sigma$ is a 
the surface deformation of $\gamma$ \cite[page 108]{BK} and $\theta_L$
is defined in \eqref{nablaK}.

We get  \cite[Theorem
1.8.1 page 108]{BK} 
  \begin{equation}\label{Q1}
    Q(\gamma)= \exp (\ii \beta)=\exp(-\oint_{\gamma}\theta_L)=  \exp({ \ii \int_{\sigma}\omega_M}),
  \end{equation}
  In the convention \eqref{QUAN2} in \cite{SB2000},         \eqref{Q1} becomes
  \begin{equation}\label{Q2}
    Q(\gamma)= \exp (\ii
    \beta)=\exp(-\oint_{\gamma}\theta_L)=\exp(-\oint_{\sigma}\dd \theta_L)=
-\oint_\sigma\Theta_L=
    \exp({ 2\ii \int_{\sigma}\omega_M}),
  \end{equation}
  With Stokes' formula \eqref{ST} and \eqref{Q1}, \eqref{Q2} we get in
  the convention \eqref{QUAN2}
  \cite[(5.4)]{SB99}
  \[
    \dd A_L= 2\omega_M,
  \]
  or
  \[
    \dd A_L= \omega_M,
  \]
  in the convention \eqref{QUANT1}.

\subsection{Examples}\label{544}

Below for the Heiwsenberg-Weyl group,
$\db{CP}^1$ and $\mc{D}_1$ we follow \cite[\S~7.2.7]{SB19}.

$\bullet${\it {The  HW}} (Heisenberg-Weyl) group  is the group with the
3-dimensional real  Lie algebra isomorphic to the Heisenberg algebra
 $\got{h}_1\equiv
\g_{HW}=$ $<is1 +z{\mb a}^{\dagger}-\bar{z}{\mb a}>_{s\in \R, z\in\C}$,
 where the bosonic creation (annihilation)
operators ${\mb a}^{\dagger}$ (respectively ${\mb a}$) verify the canonical
commutation  relations 
$ [{\mb a},{\mb a}^{\dagger}]=1,$
and the action of the annihilation operator on the vacuum is
${\mb a}\e_0=0. $

Glauber's coherent states $e_z=e^{z\mb{a}^{\dagger}}e_0$ have the scalar
product
\[
(e_{\bar{z}},e_{\bar{z}'})=e^{z\bar{z}'}, \quad \omega=\ii \dd z\wedge
  \dd \bar{z}.
\]

$\bullet$  $\db{CP}^1=S^2=\text{SU}(2)/\text{U}(1)$. The
generators of $\text{SU}(2)$ verify the commutation relations
\[
  [J_0,J_{\pm}]=\pm J_{\pm};\quad [J_-,J_+]= -2J_0,
  \]
  and the finite dimensional representation of $\text{SU}(2)$ are  defined by the action on the
  extremal weight
  \begin{equation}\label{rep1}\mb{J}_+e_{j,-j}\not=0,\quad \mb{J}_-e_{j,-j}=0,\quad
    \mb{J}_0e_{j,-j}=-j e_{j,-j}, \quad  j=\frac{m}{2},~ m\in \db{N}.
         \end{equation}
If the CS vectors on $S^2$ are introduced as  \[e_z=e^{z\mb{J}_+}e_{j,-j}, \] then the   scalar product, \Ka~
two-form, Berry connection $A_B$ and $\dd A_B$ are 
 respectively 
 \begin{subequations}\label{551}
   \begin{align}
  (e_{\bar{z}},e_{\bar{z}'})& =(1+z\bar{z}')^{2j},\quad
                              \omega_{S^2}=2\ii j\frac{\dd
                              z\wedge \dd \bar{z}}{(1+|z|^2)^2},\label{547a}\\ A_B& =\ii j\frac{\bar{z}\dd
      z-z \dd \bar{z}}{1+|z|^2}, \quad \dd A_B=-2\ii  j\frac{\dd z\wedge
    \dd \bar{z}}{(1+|z|^2)^2}.
   \end{align}
   \end{subequations}

   $\bullet$  
  $\mc{D}_1=\text{SU}(1,1)/\text{U}(1)$. The generators of
  $\text{SU}(1,1)$ verify the commutation relations
  \[
    [K_0,K_{\pm}]=\pm K_{\pm},\quad [K_-,K_+]=2K_0.
  \]
  The positive holomorphic discrete
  series representation corresponds to the action on the extremal
  weight
  \begin{equation}\label{rep2}
    \mb{K}_+e_{k,k}\not=0,\quad \mb{K}_-e_{k,k}=0,\quad
    \mb{K}_0e_{k,k}= k e_{k,k}.
         \end{equation}

  If
  \[
    e_z=e^{z\mb{K}_+}e_{k,k}, \quad |z|<1,\quad
k=1,\frac{3}{2},2,\frac{5}{2},\dots,
\]
then the scalar product, \Ka~two-form, Berry connection $A_B$ and $\dd
A_B$ are
respectively 
\begin{subequations}\label{552}
  \begin{align}
  (e_{\bar{z}},e_{\bar{z'}})& =(1-z\bar{z}')^{-2k},\quad
                              \omega_{\mc{D}_1} = 2\ii k
                              \frac{\dd z\wedge \dd \bar{z}}{(1-|z|^2)^2},\label{549a}\\
    A_B&=\ii  k\frac{\bar{z}\dd
      z-z \dd \bar{z}}{1-|z|^2}, \quad \dd A_B= -2\ii k\frac{\dd z\wedge\dd\bar{z}}{(1-|z|^2)^2}.\label{549b}
  \end{align}
\end{subequations}
We remark that \eqref{549b} has already appeared in \eqref{ThEETA}.

Let us denote
with $J$  the   extremal weight of the representation  \eqref{rep1}
(\eqref{rep2})  i.e. $J=-2j$ $(J=2k)$ of $\text{SU}(2)$ (respectively, $\text{SU}(1,1)$).
Then  we write \eqref{552} as
\begin{subequations}
  \begin{align}
(e_{\bar{z}},e_{\bar{z'}})& =(1\pm z\bar{z}')^{-2J},\quad \omega_{S^2,\mc{D}_1} = \mp
                            2 \ii  J 
                              \frac{\ \dd z\wedge \dd \bar{z}}{(1\pm|z|^2)^2},\\
    A _B&=\mp \ii  J\frac{\bar{z}\dd z-z\dd\bar{z}}
      {1\pm |z|^2}, \quad \dd A_B= \pm 2\ii J\frac{\dd z\wedge\
        \dd\bar{z}}{(1\pm |z|^2)^2},
  \end{align}
  \end{subequations}
where + (-)   corresponds to the compact
(respectively noncompact) manifold $\db{CP}^1$ (respectively
$\mc{D}_1$).

We also have on $S^2$ and $\mc{D}_1$
\[
  \omega_{S^2,\mc{D}_1}=-\dd A_B.
  \]

  $\bullet$  The Complex Grassmann Manifold
  $G_n(\C^{m+n}) =G_c/K$  and its noncompact
  dual $G_n/K$, $G_c=   \text{SU}(n+m)$, $G_n= \text{SU}(n,m)$, $K=
  S(\text{U}(n)\times \text{U}(m))$.  See \cite[page 452, Type  AIII]{helg}.
Let $Z\in M(n,m,\C)$ be Pontrjagin's  coordinates for the compact
(noncompact) Grassmann manifold. Then the scalar
product of two coherent state vectors is \cite[ (6.26)]{SB95} 
\begin{equation}\label{DOII}
 (e_{Z'},e_{Z})=\det(\un+\epsilon ZZ'^+)^{\epsilon}, \text{~~where~~}\epsilon = ~- ~(+) \text{~~for~~}
 X_n~ (X_c), 
  \end{equation}
 for the particular dominant weight \cite[\S 6, Remark 4, (6.25)]{SB95} 
\[
  j=j_0= (\underbrace{1,\dots,1}_n,\underbrace{0,\dots,0}_m).
\]
Formula \eqref{DOII} for $\epsilon=1$ for the complex Grassmann
manifold appears in \cite[(3.20)]{SBCAG89},
\cite[(3.6)]{SBCAG91}, using the technique   
to realize $G_n(\C^{m+n})$ as Slater determinat manifold \cite{MORSE}. 

With formula \eqref{BCON}  applied to \eqref{DOII}  and the relation
\[
  \frac{\dd }{\dd t}\det A=\det A \operatorname{Tr}(A^{-1}\frac {\pa
      A}{\pa t}), \quad A\in M(n,\C),
  \]
  we get
  \begin{subequations}\label{DOII3}
    \begin{align}
  A_B=& \frac{\ii }{2}\operatorname{Tr}[(\dd Z Z^+-Z \dd Z^+)(\un
      +\epsilon ZZ^+)^{-1} ],\label{556a} \\
      \dd A_B=& -\ii   \operatorname{Tr}[\dd
  Z(\mathbb{1}_m+\epsilon Z^+Z)^{-1}\wedge \dd Z^+(\un +\epsilon
              ZZ^+)^{-1}].\label{556b}
    \end{align}
  \end{subequations}
  Formula \eqref{556a} of the Berry connection on the complex Grassmann manifold  $G_n(\C^{n+m})$
corresponding
to the scalar product  \eqref{DOI} was obtained in  \cite[(5.17)]{sbcag}
\begin{equation}
  A_B=\frac{\ii}{2}\operatorname{Tr} [(Z^+\dd Z-\dd Z^+Z)(\mathbb{1}_m +Z^+Z)^{-1}],
\end{equation}
where $Z\in  M(m,n,\C)$, which is identical with \eqref{DOII} if
$m\leftrightarrow n$.

Now we apply formula \eqref{KALP2} for $G_c/K$ and $G_n/K$ for the
scalar product \eqref{DOII} and we get
\begin{equation}\label{OMM} \omega= \ii  \operatorname{Tr}[(\un+\epsilon ZZ^+)^{-1}\dd Z\wedge
(\mathbb{1}_m+\epsilon Z^+Z)^{-1}\dd Z^+].\end{equation}

Comparing  \eqref{556b} and \eqref{OMM}, it follows that 
 on $G_c/K$ and $G_n/K$  we have the relation 
 \begin{equation}\label{DABOM}
  \dd A_B= - \omega. 
  \end{equation}

Formula \eqref{556b}  for $\epsilon=1$ on $G_n(\C^{n+m})$  
appears  in \cite[page 1005]{sbcag}, where it was emphasized that it
is  the explicit realization
of the two-form $V$ of Simon \cite{BS}.  

We recall that the invariant metric on $X_c$ ($X_n$) \cite[(6.10)]{SB95} is
\[\dd s^2=k \operatorname{Tr}[\dd
  Z(\mathbb{1}_m+\epsilon Z^+Z)^{-1}\dd Z^+(\un+\epsilon ZZ^+)^{-1}],
  \quad Z\in M(n,m,\C).
\]
The equation of geodesics on $X_{c,n}$  \cite[(6.13)]{SB95}
\[
  \frac{\dd^2}{\dd t^2}
 -2\epsilon\frac{\dd Z}{\dd t}Z^+(\un +\epsilon ZZ^+)^{-1}\frac{\dd
   Z}{\dd t}=0 \]
has the solution $Z=Z(tB)$
\[
  Z=Z(B)=B\frac{\text{ta}\sqrt{B^+B}}{\sqrt{B^+B}},\quad  B \in M(n,m,\C),
  \]
  with the initial condition $Z(0)=B$, where $\text{ta}= \tan$
  ($\tanh$) for $X_c$ (respectively, $X_n$) .

  Equations \eqref{556a}  (\eqref{556b})
  for $\frac{\text{SU}(m+n)}{\text{SU}(m)\times\text{SU}(n)}$
 (respectively  $\frac{\text{SU}(m,n)}{\text{SU}(m)\times\text{SU}(n)}$) of  $\varphi_B = \oint A_B$, $\dd A_B $
 have been obtained in \cite[ (62), (63)]{sbl}.  

In \cite{SB2000} in the relation $A_L=\ii \theta_L$ the connection
matrix $\theta_L$ corresponds to the Hermitian metric on the dual
of the tautological line bundle on the Grassmann manifold
$\hat{h}_L(Z)=\det(\un+ZZ^+)^{-1}$. The Berry connection which
corresponds to $A_L$ is \cite[(5.2)]{SB2000}
\begin{equation}\label{ABERRY}  A_B=\frac{\ii}{2}\operatorname{Tr}[(\dd ZZ^+-Z\dd Z^+)(\un + ZZ^+)^{-1} ].
\end{equation}
The corresponding two-form on  $G_n(\C^{n+m})$ is  \cite[(5.3)]{SB2000}
\[
  \omega=\frac{\ii}{2}\operatorname{Tr}[\dd
  Z(\mathbb{1}_{m}+Z^+Z)^{-1}\wedge \dd Z^+(\un +ZZ^+)^{-1}].  \]
We reported \cite[(5.5)]{SB99}  the holomorphic   connection on the compact 
 complex Grassmann manifold and its non-compact dual
\[
  A_L=\ii \operatorname{Tr}[\dd  ZZ^+(\mathbb{1}_n+\epsilon ZZ^+)^{-1}]
\]
corresponding to the hyperplan line bundle,  the dual  of the tautological line bundle $L$ on the
Grassmannian and its non-compact dual \cite[(5.6)]{SB99} 
\begin{equation}\label{HL}
  \hat{h}_L=\det(\un+\epsilon ZZ^+)^{-\epsilon},
\end{equation}
the \Ka~ two-form is \cite[(5.7)]{SB99},
where $\mathbb{1}_n$ should be replaced with $\mathbb{1}_m$
\[
  \omega=\frac{\ii}{2}\operatorname{Tr}[\dd
  Z(\mathbb{1}_{m}+\epsilon Z^+Z)^{-1}\wedge \dd Z^+(\un +\epsilon
  ZZ^+)^{-1}],  \]
while the Berry connection is \cite[(5.8)]{SB99}
\begin{equation}\label{ABERRY2}  A_B=\frac{\ii}{2}\operatorname{Tr}[(\dd
ZZ^+-Z\dd Z^+)(\un + \epsilon
ZZ^+)^{-1} ].
\end{equation}

$\bullet$  The complex projective space $\db{CP}^n$ and
  $\db{CP}^{n,1}$.
We recall that the ray space is defined as 
\[
  \db{CP}^n=\db{P}(\C^n)=S^{2n+1}/\approx =
  \frac{\text{SU}(n+1)}{S(\text{U}(n)\times\text{U}(1))}, \quad\text{where~~~}
  x\approx y\leftrightarrow x=\lambda y, \lambda\in\text{U}(1).
\]

Also we have
\[
  \db{CP}^n=\C^{n+1}/\approx \quad\text{where~~~}
  x\approx y\leftrightarrow x=\lambda y, \lambda\in\C^*=\C\setminus 0.
\]

Also we have the dual space
\[
  \db{CP}^{n,1}= \frac{\text{SU}(n,1)}{S(\text{U}(n)\times\text{U}(1))}.
  \]

    In \cite[(24)]{SBCAG91}, \cite[(5.8)]{SBCAG912}    we have proved that for
    $\db{CP}^N$      and $M(1,N) \ni Z  =(Z_1,\dots,Z_N)$ that
    \begin{subequations}\label{563}
      \begin{align}
      (\mathbb{1}_N+Z^+Z)^{-1}_{\alpha,\beta} & =(\mathbb{1}_N+|Z|^2)^{-1}\left\{\begin{array}{l}
                                                               1+|Z|^2-|Z_{\alpha}|^2,\quad \alpha=\beta\\
    -\bar{Z}_{\alpha}Z_{\beta}, \quad\quad\quad\quad\quad
                                                                                   \alpha\not=
                                                                                \beta    \end{array}\right.\\
                                                                           & =\frac{(1+|Z|^2
                                                                             )\delta_{\alpha\beta}-\bar{Z}_{\alpha}Z_{\beta}}{1+|Z|^2} ,
                                                                             \end{align}
                                                                               \end{subequations}
      where \[
        |Z|^2=|Z_1|^2+|Z_2|^2+\cdots|Z_N|^2.\]

      We recall  that    the Fubini-Studdy metric  on
      $\db{CP}^N$  is 
 \begin{equation}\label{FB}
 g_{\alpha\beta}= \frac{(1+|Z|^2)\delta_{\alpha\beta}-\bar{Z}_{\alpha}Z_{\beta}}{(1+|Z|^2)^2}.
  \end{equation}

For  $\db{CP}^{N,1}$ we have \eqref{5631} replacing \eqref{563} for $\db{CP}^N$ and
we can write together  
$\db{CP}^N$ ($\epsilon =1$) and  $\db{CP}^{N,1}$ ($\epsilon =-1$) as
\begin{subequations}\label{5631}
\begin{align}
      (\mathbb{1}_N+\epsilon Z^+Z)^{-1}_{\alpha,\beta} &
                                                         =(\mathbb{1}_N+\epsilon |Z|^2)^{-1}\left\{\begin{array}{l}
                                                              ( 1+\epsilon |Z|^2),\quad \alpha=\beta\\
                                                                                               ( -\epsilon\bar{Z}_{\alpha}Z_{\beta}),
                                                               \quad\quad\quad\quad\quad
                                                               \alpha\not=
                                                                                \beta    \end{array}\right.\\
                                                                           &
                                                                             =\frac{(1+\epsilon
                                                                           |Z|^2)\delta_{\alpha\beta}-\epsilon\bar{Z}_{\alpha}Z_{\beta}}{1+\epsilon
                                                                             |Z|^2}.
                                                                             \end{align}
                                                                               \end{subequations}
                                                                      
 Now we introduce \eqref{5631} into \eqref{OMM} and we get
 \begin{equation}\label{FBD}
  \omega_{\db{CP}^n, \db{CP}^{n,1}}(Z,\bar{Z}) =\ii g_{\alpha\beta}\dd
  Z_{\alpha}\wedge\dd\bar{Z}_{\beta}=\ii 
\frac{(1+\epsilon
  |Z|^2)\delta_{\alpha\beta}-\epsilon\bar{Z}_{\alpha}{Z}_{\beta}}{(1+\epsilon`|Z|^2)^2}
\dd Z_{\alpha}\wedge\dd\bar{Z}_{\beta},
\end{equation}
which is the Fubini-Study \Ka~ two - form \eqref{FB} on $\db{CP}^n$
(respectively, its non-compact dual $\db{CP}^{n,1}$, sometimes called
the hyperbolic space and denoted $\db{H}^n$ \cite[page 67]{JAW} ).
The condition on $Z$ for $\epsilon=-1$,   $(n,m)=(1,n)$, is \cite[(23)]{sbl}
   \[   1-|Z|^2>0.\]
 If equation \eqref{FBD} we put  $n=1$ we regain \eqref{547a} for
 $\epsilon =1$  ( $\epsilon=-1$)
 $S^2$,  \eqref{547a}, $j=\frac{1}{2}$  ($\mc{D}_1$,\eqref{549a}), respectively, $k=\frac{1}{2}$).
                                                                                                                         
     We recall also the definition of the {\bf tautological line bundle} $[-1]$
      \begin{equation}\label{TAUT} [-1]= \{(z,v)\in\db{P}(\C^{n+1})\times \C^{n+1}|~v\in [z],~ [z]
        \text{~is the line bundle defined by z}\},
      \end{equation}
      associated to the transition functions
      \[
        g_{ij}([z])=\{\frac{z_i}{z_j}\}, \quad [z]\in U_i\cap U_j.
      \]
      $[1]$ is the {\bf hyperplane bundle}, the dual of the
      tautological bundle $[-1]$.

        The tautological line bundle does not have global holomorphic
        sections not identically 0.

        Formula \eqref{556a} particularised for $\db{CP}^n$ and 
        $\db{CP}^{n,1}$ reads
        \begin{equation}\label{ADP}
          A_B=\frac{\ii }{2}\frac{\bar{z}_i\dd z_i-z_i \dd
    \bar{z}_i}{1+\epsilon |z|^2},\end{equation}
which for $\epsilon =1$  is \cite[(10)]{DP}.
If we differentiate \eqref{ADP}, we got
\begin{equation}\label{dADP}
       \dd A_B=    \ii
              \frac{\epsilon \bar{z}_iz_j-(1+\epsilon
            |z|^2)\delta_{ij}}{(1+\epsilon |z|^2)^2}\dd z_i \wedge \dd
          \bar{z}_j,\end{equation}
        which is \eqref{556b} particularised for $m=1$ with \eqref{5631}.

        Applying \eqref{PHO} to \eqref{HL} for $\db{CP}^n, \db{CP}^{n,1}$, we get
        \[
          \Omega_{\db{CP}^n, \db{CP}^{n,1}}=
            \frac{-\epsilon
              \bar{Z}_{\alpha}Z_{\beta}+(1+\epsilon|Z|^2)\delta_{\alpha\beta}}{(1+\epsilon\Z|^2)}\dd
            Z_{\alpha}\wedge \dd \bar{Z}_{\beta}= -\ii \omega_{\db{CP}^n, \db{CP}^{n,1}},
      \]
      which is the  quantizablity condition \eqref{QUANT1}
       of $\db{CP}^n$ (respectively $\db{CP}^{n,1}$).

\subsection*{Acknowledgements}
This research was conducted in the framework of the
ANCS project program PN 19 06 01 01/2019.

\end{document}